\newcommand{\B}{\mathtt{B}}
\newcommand{\Y}{{\rm Y}_{d,n}^{\mathtt B}}
\newcommand{\s}{\mathtt s}
\newcommand{\R}{\mathtt r}
\newcommand{\U}{\mathsf{u}}
\newcommand{\V}{\mathsf{v}}
\newcommand{\x}{\mathsf{x}}
\newcommand{\y}{\mathsf{y}}
\newcommand{\z}{\mathsf{z}}
\newcommand{\m}{\mathtt{m}}
\newcommand{\W}{\mathsf{w}}
\newcommand{\E}{\mathcal{E}_n^{\mathtt{B}}}
\newcommand{\p}{\mathsf{P}(\mathbf{n})}
\newcommand{\pcero}{\mathsf{P}(\mathbf{n}_0)}
\newcommand{\n}{\mathbf{n}}
\newcommand{\w}{{\mathtt m}_1\cdots {\mathtt m}_{n-1}}
\newcommand{\wmenos}{{\mathtt m}_1\cdots {\mathtt m}_{n-2}}
\newcommand{\vnor}{v_i^r\otimes v_j^s}
\newcommand{\vinv}{v_j^s\otimes v_i^r}
\newcommand{\vdimn}{v_{i_1}^{r_1}\otimes \dots \otimes v_{i_n}^{r_n}}
\newcommand{\tmenosn}{\mathbb{T}_{n,k}^{-}}
\newcommand{\tmenos}{\mathbb{T}_{n-1,k}^{-}}
\newtheorem{theorem}{Theorem}
\newtheorem{corollary}{Corollary}
\newtheorem{lemma}{Lemma}
\newtheorem{proposition}{Proposition}
\newtheorem{definition}{Definition}
\newtheorem{remark}{Remark}
\begin{document}
\title{A bt--algebra of type $\mathtt{B}$}

\author{M. Flores}
\thanks{This project was partially supported by PAI 79140019}
\address{Instituto de Matem\'{a}ticas \\ Universidad de Valpara\'{i}so \\ Gran Breta\~{n}a 1091, Valpara\'{i}so, Chile}
\email{marcelo.flores@uv.cl}

\keywords{Framization, bt-algebra of type $\mathtt{B}$, Markov trace, link invariants, torus knots and links}


\maketitle

\begin{abstract}
  We introduce a bt--algebra of type $\mathtt{B}$. As the original construction of the bt--algebra, we define this bt--algebra of type $\mathtt{B}$ by building from $\Y$. Notably we find a basis for it, a faithful tensorial representation, and we prove that it supports a Markov trace, from which we derive invariants of classical links in the solid torus.
\end{abstract}

\section{Introduction}
The algebra of braids an ties, known as well as the bt--algebra, was defined originally by Aicardi and Juyumaya in \cite{aiju}, having as goal to construct new representations of the braid group. Later, it was observed that its generators and relations have a diagrammatical interpretation in terms of braids and ties, hence its name, see \cite[Section 6]{aijuMMJ}. For a positive integer $n$, the bt--algebra with parameter $\U$ is denoted by $\mathcal{E}_n(\U)$, and its definition is obtained by considering abstractly as the subalgebra of the Yokonuma--Hecke algebra ${\rm Y}_{d,n}:={\rm Y}_{d,n}(\U)$ generated by the braid generators and the family of idempotents that appear in the quadratic relations of these generators. Thus, there is a natural homomorphism from $\mathcal{E}_n$ in ${\rm Y}_{d,n}$, which is injective for $d\geq n$, see \cite{esry}, cf. \cite[Remark 3]{aijuMMJ}.\smallbreak
In \cite{aiju} was proved that the bt--algebra has finite dimension, although that the autors couldn't get an explicit basis for it. This problem was solved later by Ryom-Hansen in \cite{rh}, who constructed a basis for $\mathcal{E}_n$, proving that the dimension of the algebra is $b_n n!$, where $b_n$ is the n-th bell number. He also constructed a faithful tensorial representation (Jimbo--type) of this algebra, which is used to classify the irreducible representations of $\mathcal{E}_n$, and plays a essential role in the proof of the linear independency of the basis proposed there.
\smallbreak
Later, in \cite{aijuMMJ} is proved that the algebra $\mathcal{E}_n$ supports a Markov trace, this is achieved by using the method of relative traces, and the basis provided by Ryom-Hansen; for more examples of the relative traces method see \cite{chpoIMRN}, \cite{fjl}, \cite{isog}. Then, by using this trace as ingredient in the Jones's recipe \cite{jo}, they define an invariant $\overline{\Delta}(\U,\mathsf{A},\mathsf{B})$ for classical knots (respectively $\overline{\Gamma}(\U,\mathsf{A},\mathsf{B})$ for singular knots) with parameters $\U, \mathsf{A}$ and $\mathsf{B}$. It's worth to say that for links, the invariant $\overline{\Delta}$ is more powerful than the Homeflypt polynomial, see \cite[Addendum]{aijuMMJ}.
\smallbreak
The bt--algebra have been studied by several researchers lately, which has helped to respond some important questions of its structure. In \cite{esry} J. Espinoza and Ryom-Hansen found a cellular basis for the bt-algebra, which is used to obtain an isomorphism theorem between $\mathcal{E}_n$ and a sum of matrix algebras over certain wreath products. In her Ph.D. tesis \cite{Ba} E. Banjo got an explicit isomorphism between the specialization $\mathcal{E}_n(1)$ and the small ramified partition algebra \cite{PMa}, and using it, she determined the complex generic representation of $\mathcal{E}_n$. Finally, in \cite{Ma} I. Marin introduced a generalization of the bt--algebra, more precisely, given any Coxeter system $(W,S)$, he defined an extension of the corresponding Iwahori--Hecke algebra, denoted by $C_W$, which coincide with the algebra $\mathcal{E}_n$ when $W$ is the Coxeter group of type $\mathtt{A}$.
\smallbreak
Recently, in \cite{fjl} we introduce a framization of the Hecke algebra of type $\mathtt{B}$, denoted by $\Y$, which is some kind of analogous of the Yokonuma--Hecke algebra for the $\mathtt{B}$-type case, hence its notation. As we recalled above, the definition of the algebra $\mathcal{E}_n(\U)$ is strongly related with certain subalgebra of ${\rm Y}_{d,n}$. Then, it is natural try to define an analogue of the bt--algebra, this time, as a subalgebra of $\Y$. Thus, in this paper we introduce a new algebra, denoted by $\E=\E(\U)$, that contains the algebra of braids and ties, and we can say that it is a bt-algebra of type $\mathtt{B}$. Moreover, we also construct a basis and a tensorial representation for it (adjusting the ideas given by Ryom-Hansen in \cite{rh}), having as goal to prove that $\E$ supports a Markov trace, which is the main result of this work. It is important to note that the algebra introduced here doesn't coincide with the given by I. Marin considering $W$ as the Coxeter group of $\mathtt{B}$ type, see Remark~\ref{Marin}.

\smallbreak
The article is organized as follows. In Section 2 we fix some notations and recall the results used in the paper. In Section 3, making an analogy with the classical case, we introduce the algebra $\E$, which contains the bt--algebra. Also, we give some relations that hold on it (which are, mostly, direct consequence of results from \cite{fjl} and \cite{rh}), and we propose a diagrammatical interpretation for $\mathcal{E}_n$, in the sense of \cite[Section 6]{aijuMMJ}. In Section 4 we construct two linear bases for $\E$ readjusting the ideas given in \cite{rh}. Similarly as in \cite{fjl}, one of these bases has a technical role, and the other is used for define a Markov trace in the last section. Moreover, we also construct a faithful tensorial representation of $\E$, which is the natural extension of the representation of the bt--algebra given by Ryom-Hansen, and plays a key role in the proof of the linear independency of one of the bases given here, see Theorem~\ref{basis} (cf. \cite[Theorem 3]{rh}). In Section 5 we prove that $\E$ supports a Markov trace (Theorem~\ref{Markovtrace}), we prove that, constructing a family of relative traces using the basis given in the previous section. We use this method since as in the classical case, the basis obtained here cannot be defined in an inductive manner, then it is extremely difficult to define a Markov trace analogously to the Ocneanu's trace \cite{jo}. Thus, keeping the approach in \cite{aijuMMJ}, we split the proof of the main result in several lemmas, which give step by step the necessary conditions for the trace. Finally, using our trace as ingredient in Jones's recipe we define an invariant of classical links in the solid torus, which restricted to classical links (that is, braids of $\mathtt{B}$--type without the \lq loop generator\rq involved, see Section 2) coincide with $\overline{\Delta}$, and therefore it is more powerful than the Homflypt polynomial, whenever is evaluated in classical links.\\

\noindent \textbf{Acknowledgements.} The results contained in this manuscript were obtained during a research visit to the Maths Section of ICTP, in Trieste, Italy, which I thank its support and hospitality. A particular thank also to F. Aicardi, for her comments about the diagrammatical interpretation of the algebra introduced here, which were essential to develop this work.

\section{Preliminaries}
In this section we review known results, necessary for the sequel, and we also fix the following terminology and notations that will be used  along the article:
\begin{itemize}

\item[--] The letters $\U,\V$ denote indeterminates. Consider ${\Bbb K}:={\Bbb C}(\U,\V)$.

\item[--] The term {\it algebra} means unital associative algebra over  ${\Bbb K}$.

\item[--] The sets $\{0,1,\dots,n\}$ and $\{1,\dots,n\}$ will be denoted simply by $\n_0$ and $\n$ respectively.

\item[--] As usual, we denote by $\ell$ the length function associated to the Coxeter groups.
\end{itemize}

\subsection{}\label{typeB} Set $n\geq 1$. Let us denote by $W_n$ the Coxeter group of type ${\mathtt  B}_n$. This is the finite Coxeter group
 associated to the following Dynkin diagram
\begin{center}
\setlength\unitlength{0.2ex}
\begin{picture}(350,40)
\put(82,20){$\R_1$}
\put(120,20){$\s_{1}$}
\put(200,20){$\s_{n-2}$}
\put(240,20){$\s_{n-1}$}

\put(85,10){\circle{5}}
\put(87.5,11){\line(1,0){35}}
\put(87.5,9){\line(1,0){35}}
\put(125,10){\circle{5}}
\put(127.5,10){\line(1,0){10}}

\put(145,10){\circle*{2}}
\put(165,10){\circle*{2}}
\put(185,10){\circle*{2}}

\put(205,10){\circle{5}}
\put(207.5,10){\line(1,0){35}}
\put(245,10){\circle{5}}
\put(192.5,10){\line(1,0){10}}


\end{picture}
\end{center}

Define $\R_k=\s_{k-1}\ldots \s_1 \R_1 \s_1\ldots \s_{k-1}$ for $2\leq k\leq n$. It is known, see \cite{gela},  that every element $w\in W_n$ can be written uniquely as $w=w_1\ldots w_n$ with $w_k\in \mathtt{N}_k$, $1\leq k\leq n$, where
\begin{equation}\label{NWn}
\mathtt{N}_k:=\left\{
1, \R_{k},
\s_{k-1}\cdots \s_{i},
\s_{k-1}\cdots \s_{i}\R_{i}\, ;\, 1\leq i \leq k-1
\right\}.
\end{equation}
Moreover, this expression for $w$ is reduced. Hence, we have $\ell(w)=\ell(w_1)+\cdots +\ell(w_n)$.\\

Further, the group $W_n$ can be realized as a subgroup of the permutation group of the set  $X_n:=\{-n, \ldots , -2, -1, 1, 2, \ldots, n\}$. Specifically, the elements of $W_n$ are the permutations $w$ such  that $w(-m) = - w(m)$, for all $m \in X_n$. Then, the elements of $W_n$ can be parameterized  by the elements of $X_n^n:=\{(m_1,\dots,m_n)\ |\ m_i\in X_n\ \text{for all $i$}\}$ (see \cite[Lemma 1.2.1]{gr}). More precisely, the element $w\in W_n$ corresponds to the element $(m_1, \ldots ,m_n)\in X_n^n$ such that $m_i= w(i)$, for details see \cite[Section 1.3]{fjl}.\\

The corresponding \textit{braid group of type} ${\mathtt  B}_n$ associated to $W_n$, is defined as the group $\widetilde{W}_n$ generated  by $\rho_1 , \sigma_1 ,\ldots ,\sigma_{n-1}$ subject to the following relations
 \begin{equation}\label{braidB}
\begin{array}{rcll}
 \sigma_i \sigma_j & =  & \sigma_j \sigma_i & \text{ for} \quad \vert i-j\vert >1,\\
  \sigma_i \sigma_j \sigma_i & = & \sigma_j \sigma_i \sigma_j & \text{ for} \quad \vert i-j\vert = 1,\\
   \rho_1\sigma_i&=&\sigma_i\rho_1 &\text{ for}\quad i>1,\\
\rho_1 \sigma_1 \rho_1\sigma_1 & = & \sigma_1 \rho_1 \sigma_1\rho_1. &
  \end{array}
\end{equation}

Geometrically, braids of type ${\mathtt  B}_n$ can be viewed as classical braids of type ${\mathtt  A}_{n}$ with $n+1$ strands, such that the first strand is identically fixed. This is called `the fixed strand'. The 2nd, \ldots, $(n+1)$st strands are renamed from 1 to $n$ and they are called `the moving strands'. The `loop' generator $\rho_1$ stands for the looping of the first moving strand around the fixed strand in the right-handed sense, see \cite{la1,la2}. In Figure \ref{bbraid} we illustrate a braid of type ${\mathtt  B}_4$.
\begin{figure}[h]
\begin{center}
  \includegraphics{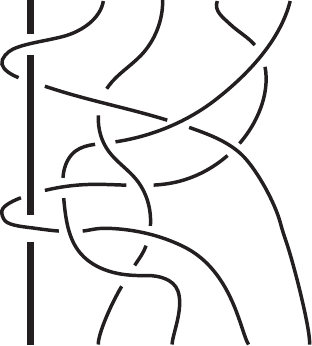}
	\caption{A braid of ${\mathtt  B}_4$-type.}\label{bbraid}
	\end{center}
 \end{figure}
\subsection{} Recently in \cite{fjl} we introduced a new framization of the Hecke algebra of type $\mathtt B$, denoted by $\Y:=\Y(\U,\V)$. This algebra was constructed searching an analogous of the Yokonuma--Hecke algebra for the type $\mathtt{B}$ case, this, with the final objective to explore their usefulness in knot theory. Thus, in this article we constructed two linear bases, a faithful tensorial representation of Jimbo type for ${\rm Y}_{d,n}^{\mathtt{B}}(\U,\V)$,  and we proved that $\Y$ supports a Markov trace. Finally we defined, by using Jones's recipe, a new invariant for framed and classical links in the solid torus. Along this paper we use several properties of this algebra, then we recall some of them. We begin with its definition

\begin{definition}\rm
Let $n\geq 2$. The algebra ${\rm Y}_{d, n}^{\mathtt{B}} := {\rm Y}_{d, n}^{\mathtt{B}}(\U, \V)$, is defined as the algebra over $\mathbb{K}:=\mathbb{C}(\U,\V)$ generated by framing generators $t_1,\dots,t_n$, braiding generators $g_1,\dots,g_{n-1}$ and the loop generator $b_1$, subject to the following relations

\begin{eqnarray}g_ig_j & = & g_jg_i  \quad \text{ for} \quad \vert i-j\vert > 1,\label{braid1}\\
  g_i g_j g_i & = & g_j g_i g_j \quad \text{ for} \quad \vert i-j\vert = 1, \label{braid2}\\
  b_1 g_i & = & g_i b_1 \quad \text{for all}\quad  i\not= 1, \label{braid3}\\
b_1 g_1 b_1 g_1 & = &  g_1 b_1 g_1 b_1, \label{braid4}\\
t_i t_j & =  & t_j t_i  \quad \text{for all }  \  i, j,\label{modular2}\\
  t_j g_i & =  & g_i t_{s_i(j)}\quad \text{for all }\,  i, j,  \label{th}\\
t_i b_1 & = & b_1 t_i \quad \text{for all i},\quad   \label{tb1}\\
t_i^d & = & 1 \quad \text{for all}\quad i, \label{modular1}\\
g_i^2 & = &  1+ (\U-\U^{-1})e_ig_i\quad \text{for all $i$}, \label{quadraticU}\\
b_1^2 & = &  1 + (\V-\V^{-1})f_1b_1.\label{quadraticV},
\end{eqnarray}
where
 $$
e_i:=\frac{1}{d}\sum_{s=0}^{d-1} t_i^st_{i+1}^{d-s} \quad \text{and}\quad f_j:=\frac{1}{d}\sum_{s=0}^{d-1} t_j^s;\quad \text{for $1\leq i\leq n-1$, and $1\leq j\leq n.$}
$$
For n=1, we define the algebra ${\rm Y}_{d,1}^{\B}$ as the algebra generated by $1, b_1$ and $t_1$ satisfying the relations (\ref{tb1}), (\ref{modular1}) and (\ref{quadraticV}).

\end{definition}
Notice that the elements $f_j$'s and $e_i$'s are idempotents. Also, It is clear that the element $f_1$ commutes with $b_1$  and  $e_i$ commutes with $g_i$. These facts imply that the generators $b_1$ and the $g_i$'s are invertible. Namely, we have:
\begin{equation}\label{inverse}
b_1^{-1} = b_1 - (\V-\V^{-1})f_1\quad \text{and}\quad g_i^{-1} = g_i - (\U -\U^{-1})e_i.
\end{equation}

Now we recall the two basis of $\Y$ given in \cite{fjl}, which we will useful for the sequel.\smallbreak
Set $\overline{b}_1 := b_1$, $\overline{b}_k := g_{k-1}\ldots g_1 b_1 g_1\ldots g_{k-1}$, and $b_k := g_{k-1}\ldots g_1 b_1 g_1^{-1}\ldots g_{k-1}^{-1}$ for all $2\leq k\leq n$. For all $1\leq k\leq n$, let us define inductively  the sets $N_{d,k}$ by
$
N_{d,1} := \{t_1^m, \overline{b}_1 t_1^m \,;\, 0 \leq m \leq d-1\}
$
and
$$
N_{d,k} := \{t_k^m, \overline{b}_{k}t_k^m, g_{k-1} x\,;\, x \in N_{d,k-1},\, 0\leq m\leq d-1\}
\quad \mbox{for all $2 \leq k\leq n$.}
$$

Analogously, for all $1\leq k\leq n$ we define inductively the sets $M_{d,k}$ exactly like $N_{d,k}$'s but exchanging $\overline{b}_k$ by $b_k$ in every case. \\

 Finally, consider $\mathsf{D}_n=\{ \mathfrak{n}_1\mathfrak{n}_2\cdots \mathfrak{n}_n\ |\mathfrak{n}_i\in N_{d,i} \}$ and $\mathsf{C}_n=\{ \mathfrak{m}_1\mathfrak{m}_2\cdots \mathfrak{m}_n\ |\mathfrak{m}_i\in M_{d,i} \}$. Then, we have that $\mathsf{D}_n$ and $\mathsf{C}_n$ are bases of $\Y$, for details see \cite[Section 4]{fjl}.

\subsection{} We denote by $\p$ the set formed by the set-partitions of $\mathbf{n}$, recall that the cardinality of $\p$ is the $n$--th Bell number, denoted by $b_n$.
The subsets of $\n$ entering a partition are called blocks. For short we shall omit the subset of cardinality 1 (single blocks) in the partition. For example, the partition $I=(\{1,2,3\},\{4,6\},\{5\},\{7\})$ in $\mathsf{P}(\mathbf{7})$, will be simply written as $I=(\{1,2,3\},\{4,6\})$. Moreover, Supp(I) will be denote the union of non--single blocks  of $I$.\smallbreak
The symmetric group $S_n$ acts naturally on $\p$. More precisely, set $I=(I_1,\dots,I_m)\in \p$ . The action $w(I)$ for a $w\in S_n$ is given by
$$w(I)=((w(I_1),\dots,w(I_m)))$$
where $w(I_k)$ is the set obtained of applying $w$ to the set $I_k$.\smallbreak
The pair $(\p, \preceq)$ is a poset. Specifically, given $I=(I_1,\dots,I_m),\ J=(J_1,\dots,J_s) \in \p$, the partial order $\preceq$ is defined by.
$$I\preceq J\qquad \text{if and only $J$ is a union of some blocks of $I$}$$
When $I\preceq J$, we will say that $I$ refines $J$.\smallbreak
Let $I, J \in \p$, we denote $I*J$ the minimal set partition refined by $I$ and $J$. Let $A$ a subset of $\n$. Along the work we will use for short $I*A$ instead $I*(A)$. Thus, if $I=(I_{1},\dots,I_{k},I_{i_{k+1}},\dots,I_{i_{m}})$, where the first $k$ blocks are the blocks that have intersection with $A$, and the rest are those that don't have. Then $I*A$ is given by

$$I*A=(A',I_{k+1},\dots,I_{m})$$
where $A'=A\cup I_{1}\cup \dots \cup I_{k}$. In particular, $I*\{j,m\}$ coincides with $I$ if $j$ and $m$ already belong to the same block, otherwise, $I*\{j,m\}$ coincides with $I$ except for the blocks containing $j$ and $m$, which merge in a sole block. For short, we will write $I*j$ instead of $I*\{j,j+1\}$. For instace, for the set partition $I=(\{1,4\},\{2,5\},\{3,6,7\})$ in $\mathsf{P}(\mathbf{8})$:
$$I*\{4,5,8\}=(\{1,2,4,5,8\},\{3,6,7\})\quad \text{,} \quad I*2=(\{1,4\},\{2,3,5,6,7\})\quad \text{and} \quad I*6=I$$
Also, for $I\in \p$, we denote $I\backslash n$ the element in $\mathsf{P}(\mathbf{n-1})$ that is obtained by removing $n$ from $I$. Let be $\mathcal{P}(\n_0)$ the set of partitions of $\n_0$, note that, $\mathsf{P}(\n_0)$ is essentially $\mathsf{P}(\mathbf{n+1})$, then all the definitions and notations above are valid for partitions in $\pcero$. Finally, for $A\subseteq \n_0$  we define $A^*=A\backslash \{0\}$.

\section{An algebra of braids and ties inside $\Y$ }

In this section we propose a generalization of the algebra of braids and ties $\mathcal{E}_n(\U)$ defined originally in \cite{aiju} and posteriorly studied in \cite{aijuMMJ,aiju2}. As we note previously, the definition of $\mathcal{E}_n(\U)$ definition was obtained by considering abstractly as a subalgebra of ${\rm Y}_{d,n}(\U)$. In \cite{fjl} we introduce a framization of the Hecke algebra of type $\mathtt{B}$, denoted by $\Y$, which is the some kind of analogous of the Yokonuma--Hecke algebra for the $\mathtt{B}$-type case. Then, it is natural to define an analogue of the bt--algebra, this time, considering a subalgebra of $\Y$, which carries us to the next definition.

\begin{definition}
 Let $n\geq 2$. We define the bt--algebra of type $\B$, denoted by $\E=\E(\U,\V)$, as the algebra generated by $B_1, T_1\dots, T_{n-1}$ and $F_1,\dots F_n, E_1\dots, E_{n-1}$, subject to the following relation
 \begin{eqnarray}
    T_iT_j &=& T_jT_i\qquad \mbox{for all $|i-j|>1$} \label{first}\\
    T_iT_{i+1}T_i &=&T_{i+1}T_iT_{i+1} \qquad \mbox{for all $1\leq i\leq n-2$} \\
    T_i^2&=&1+(\U-\U^{-1})E_iT_i \qquad \mbox{for all $1\leq i\leq n-1$} \\
    E_i^2 &=& E_i  \qquad \mbox{for all $i$}\\
    E_iE_j&=&E_jE_i  \qquad \mbox{for all $i, j$} \\
    E_iT_i&=&T_iE_i   \qquad \mbox{for all $1\leq i\leq n-1$}\\
    E_iT_j&=&T_jE_i \qquad \mbox{for all $|i-j|>1$} \\
    E_iE_jT_i&=&T_iE_iE_j= E_j T_i E_j \qquad \mbox{for all $1\leq i,j\leq n-1$ such that $|i-j|=1$} \\
    E_iT_jT_i&=&T_jT_iE_j\qquad \mbox{for all $1\leq i,j\leq n-1$ such that $|i-j|=1$}\label{lastA}\\
    B_1T_1B_1T_1&=&T_1B_1T_1B_1  \\
    B_1T_i&=&T_iB_1 \qquad \mbox{for all $i>1$}\\
    B_1^2&=&1+(\V-\V^{-1})F_1B_1\label{quadratictipoB}\\
    B_1E_i&=&E_iB_1\qquad \mbox{for all $i$} \label{last} \\
    F_i^2&=&F_i \qquad \mbox{for all $i$} \label{idempotentf} \\
    B_1F_j&=&F_jB_1 \qquad \mbox{for all $j$} \label{actb}\\
    F_iE_j&=&E_jF_i \qquad \mbox{for all $i,j$}\\
    F_jT_i&=&T_iF_{s_i(j)} \label{perm}\qquad \text{where $s_i$ is the transposition $(i,i+1)$}\\
    E_iF_i&=&F_iF_{i+1}=E_iF_{i+1} \label{ef}\qquad \mbox{for all $1\leq i\leq n-1$}
 \end{eqnarray}
\end{definition}
For $n=1$ we define the algebra $\mathcal{E}_1^{\B}$ as the algebra generated by $1, B_1$ and $F_1$ subject to the relations (\ref{quadratictipoB}),(\ref{idempotentf}) and (\ref{actb}).\\

  More precisely, the definition of $\E$ is obtained by considering abstractly the subalgebra of $\Y$ generated by $b_1$, and the elements $g_i$'s, $e_i$'s and $f_i$'s. Thus, considering $g_i$ as $T_i$, $e_i$ as $E_i$, $f_i$ as $F_i$ and $b_1$ as $B_1$, the defining relations of $\E$ correspond to the set of relations derived from the relations (\ref{braid1})-(\ref{quadraticV}) of $\Y$.
\begin{proposition}\label{homoEY}
  There is a natural homomorphism $\varphi_n:\E\rightarrow \Y$ defined by the mapping $T_i\mapsto g_i$, $E_i\mapsto e_i$, $F_i\mapsto f_i$ and $B_1\mapsto b_1$.
\end{proposition}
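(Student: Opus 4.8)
The plan is to verify directly that the assignment $T_i\mapsto g_i$, $E_i\mapsto e_i$, $F_i\mapsto f_i$, $B_1\mapsto b_1$ respects every defining relation of $\E$, so that by the universal property of a presentation it extends to a well-defined $\mathbb{K}$--algebra homomorphism $\varphi_n$. Since $\E$ is \emph{defined} abstractly by generators and relations, establishing that the images in $\Y$ satisfy each of the relations (\ref{first})--(\ref{ef}) is exactly what is required: a presentation-defined algebra $A=\mathbb{K}\langle X\mid R\rangle$ admits a homomorphism to any algebra $A'$ sending the generators $X$ to chosen elements $x'$ precisely when those elements satisfy all the relations $R$.

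First I would go through the relations in order, matching each to its counterpart among (\ref{braid1})--(\ref{quadraticV}) in $\Y$. The braid relations (\ref{first})--(\ref{braid2}), the commutations with $b_1$, the mixed relation (\ref{braid4}), and the quadratic relations map onto their images verbatim, so these are immediate. The relations purely involving the idempotents $E_i,F_i$ and their interactions with $T_i,B_1$ require checking that the explicit idempotents $e_i=\tfrac1d\sum_s t_i^s t_{i+1}^{d-s}$ and $f_j=\tfrac1d\sum_s t_j^s$ satisfy the stated identities in $\Y$. Most of these follow from the remarks already recorded after the definition of $\Y$: the $e_i$'s and $f_j$'s are idempotents, they commute among themselves (being built from the commuting $t$'s), $e_i$ commutes with $g_i$, and $f_1$ commutes with $b_1$. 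The relations with genuine content are the ``braidlike'' idempotent identities such as $E_iE_jT_i=E_jT_iE_j$ for $|i-j|=1$, the relation $E_iT_jT_i=T_jT_iE_j$, the permutation rule (\ref{perm}) $f_j g_i=g_i f_{s_i(j)}$, and the absorption relation (\ref{ef}) $e_i f_i=f_i f_{i+1}=e_i f_{i+1}$.

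The bulk of the work, and the place to be careful, is deriving these last identities from the relations of $\Y$. The permutation rule (\ref{perm}) follows from (\ref{th}), since conjugating $f_j=\tfrac1d\sum_s t_j^s$ by $g_i$ permutes the framing index by $s_i$. The relation (\ref{ef}) is a direct computation: expanding $e_i f_{i+1}=\tfrac1{d^2}\sum_{s,u} t_i^s t_{i+1}^{d-s} t_{i+1}^{u}$ and reindexing using $t_{i+1}^d=1$ collapses the $t_{i+1}$--powers, yielding $\tfrac1d\sum_s t_i^s\cdot\tfrac1d\sum_u t_{i+1}^u=f_i f_{i+1}$, and symmetrically for $e_i f_i$. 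The braid-type idempotent relations are the subtle ones; I expect these are already established in \cite{fjl} (indeed the analogous identities for the ordinary Yokonuma--Hecke algebra are standard), so I would cite those computations rather than reproduce them. The main obstacle is therefore purely bookkeeping: confirming that the full list (\ref{first})--(\ref{ef}) is \emph{exhausted} by relations verifiable in $\Y$, with none omitted and none requiring an identity that fails in $\Y$. Once every relation is checked, the homomorphism $\varphi_n$ exists by the universal property, completing the proof; I would note that injectivity is a separate and harder question, not asserted here.
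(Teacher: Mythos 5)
Your proposal is correct and is essentially the paper's own argument made explicit: the paper offers no separate proof, relying instead on the remark immediately preceding the proposition that the defining relations of $\E$ were chosen precisely as relations satisfied by $b_1$, the $g_i$'s, $e_i$'s and $f_i$'s in $\Y$, so the homomorphism exists by the universal property of the presentation. Your explicit verifications (the reindexing argument for $e_if_{i+1}=f_if_{i+1}$, the derivation of $f_jg_i=g_if_{s_i(j)}$ from relation (\ref{th}), and the citation of \cite{fjl} for the braid-type idempotent identities) are all sound and simply fill in details the paper takes for granted.
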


\begin{remark}\label{Marin} \rm
Let be $(W,S)$ a Coxeter system, and $C_W$ the algebra introduced by I. Marin in \cite{Ma}. It is known that $C_W=\mathcal{E}_n$ when $W$ is the Coxeter group of type $\mathtt{A}_{n-1}$, then, it is natural to think that $C_W$ should coincide with $\E$ when $W=W_n$, but this doesn't happen. In fact, we will prove in Section 4 that the dimension of $\E$ is $b_{n+1}|W_n|$, meanwhile $C_W$ has dimension $Bell(W)|W|$, where the $Bell(W)$ is a entire number called the Bell number of $W$ (by obvious reasons). These numbers are not completely determined for type $\mathtt{B}_n$, but are known for low dimensions, more precisely, for $n\geq 2$ the sequence of dimensions is the following: 8, 38, 218, 1430, 10514,\dots; for details see \cite[Section 3.6]{Ma}. Thus, we have that these algebras are different, which indicates that the algebra $\E$ should be interesting by itself.
\end{remark}
\begin{remark}\label{changvar}\rm
  Note that the quadratic relation for ${\rm Y}_{d,n}$ and $\mathcal{E}_n$ used in \cite{aiju, aijuMMJ} is different than the used here. However, it is well known that modifying the set of generators of ${\rm Y}_{d,n}$ (respectively $\mathcal{E}_n$), it is possible to get a presentation with the desired quadratic relation . More precisely, if $\widetilde{g}_i$ (respectively $\widetilde{T}_i$) denote the original braid generators of the Yokonuma--Hecke algebra (respectively, of the bt--algebra), and $u$ the parameter used in the usual quadratic relation. Then, by taking $u=\U^2$ and $g_i=\widetilde{g}_i+(\U^{-1}-1)e_i\widetilde{g}_i$ (respectively $T_i=\widetilde{T}_i+(\U^{-1}-1)e_i\widetilde{T}_i$), we obtain a presentation including the quadratic relation used by us, cf. \cite[Remark 1]{chporep}. Consequently, the bt-algebra $\mathcal{E}_n$ can be regarded as the algebra generated by the elements $T_i$'s and $E_i$'s subject to the relations (\ref{first})-(\ref{lastA}), thus, in particular, we have that $\mathcal{E}_n\subseteq \E$.
\end{remark}

\begin{proposition}
   The map $\varphi:\E\rightarrow \mathcal{E}_n$ given by $\varphi(T_i)=T_i$, $\varphi(E_i)=E_i$, $\varphi(B_1)=1$ and $\varphi(F_i)=1$ define a natural epimorphism.
\end{proposition}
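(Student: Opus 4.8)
The plan is to verify that $\varphi$ is a well-defined algebra homomorphism by checking that the proposed assignment on generators respects every defining relation of $\E$, and then to observe that surjectivity is immediate. Concretely, I would set $\varphi(T_i)=T_i$, $\varphi(E_i)=E_i$, $\varphi(B_1)=1$ and $\varphi(F_i)=1$ on generators, extend multiplicatively, and then confirm that the image of each relation (\ref{first})--(\ref{ef}) holds in $\mathcal{E}_n$. By Remark~\ref{changvar}, the algebra $\mathcal{E}_n$ is presented by the generators $T_i$'s and $E_i$'s subject exactly to the relations (\ref{first})--(\ref{lastA}), so those relations are automatically preserved because $\varphi$ fixes the $T_i$'s and $E_i$'s and sends each of (\ref{first})--(\ref{lastA}) to the identical relation already holding in $\mathcal{E}_n$.

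The remaining relations are those genuinely involving $B_1$ or the $F_i$'s, namely (\ref{quadratictipoB})--(\ref{ef}). First I would substitute $\varphi(B_1)=1$ and $\varphi(F_i)=1$ into each of these and check the resulting identity in $\mathcal{E}_n$. The relations (\ref{last}), (\ref{actb}), and the commutation of $B_1$ with the $T_i$'s all become trivial commutations with the identity, hence hold vacuously. The braid-type relation $B_1T_1B_1T_1=T_1B_1T_1B_1$ collapses to $T_1^2=T_1^2$, and similarly the quadratic relation (\ref{quadratictipoB}) becomes $1=1+(\V-\V^{-1})\cdot 1\cdot 1$; this last one is the point that requires care, since it forces a compatibility condition on the scalar. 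The relation (\ref{idempotentf}) becomes $1=1$, and (\ref{perm}) reduces to $T_i=T_i$ after setting all $F$'s to $1$. Likewise (\ref{ef}) sends $E_iF_i=F_iF_{i+1}=E_iF_{i+1}$ to $E_i=1=E_i$, which is problematic.

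The main obstacle, therefore, is that naive substitution produces false identities: relation (\ref{quadratictipoB}) would demand $(\V-\V^{-1})=0$, and relation (\ref{ef}) would demand $E_i=1$, neither of which holds in $\mathcal{E}_n$. This signals that $\varphi$ cannot literally send $F_i\mapsto 1$ and $B_1\mapsto 1$ while landing in $\mathcal{E}_n(\U)$ over the field $\C(\U,\V)$. The honest resolution is that the target must be understood as a suitable specialization or extension of scalars: one sets $\V\mapsto 1$ (or $\V\mapsto\V^{-1}$) so that the $\V$-dependent term in (\ref{quadratictipoB}) vanishes, and one must interpret $F_i\mapsto 1$ as the statement that in the quotient the idempotents $F_i$ become the unit, which via (\ref{ef}) simultaneously forces $E_i\mapsto 1$ unless (\ref{ef}) is reinterpreted. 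I would thus reformulate the claim so that $\varphi$ is the composite of the specialization $\V=1$ with the quotient map killing the ideal generated by $B_1-1$ and the $F_i-1$, and then re-examine whether that quotient is genuinely $\mathcal{E}_n(\U)$; the delicate step is reconciling (\ref{ef}) with $F_i\mapsto 1$, which I expect to be the crux of whether the statement is correct as printed or needs the amended target algebra.
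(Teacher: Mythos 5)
You are right to distrust the printed assignment, and your strategy is the only one available: the paper states this proposition without any proof, so checking the defining relations (\ref{first})--(\ref{ef}) is exactly what must be done, and your observation that (\ref{quadratictipoB}) and (\ref{ef}) fail under $B_1\mapsto 1$, $F_i\mapsto 1$ is correct --- over $\mathbb{K}=\mathbb{C}(\U,\V)$ the first would force $\V-\V^{-1}=0$ and the second would force $E_i=1$ in $\mathcal{E}_n$, both false. However, your proposed repair does not work, and you stop short of a proof. Specializing $\V=1$ and quotienting by the ideal generated by $B_1-1$ and the $F_i-1$ runs into precisely the obstruction you flagged: relation (\ref{ef}) then forces $E_i=1$, so that quotient is the Iwahori--Hecke algebra of type $\mathtt{A}$ (with $T_i^2=1+(\U-\U^{-1})T_i$), not $\mathcal{E}_n$. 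The ``crux'' you leave open is in fact fatal to your amendment.

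The missing idea is that the scalar hit by the $F_j$'s must be $0$, not $1$: the statement should read $\varphi(F_i)=0$. With $T_i\mapsto T_i$, $E_i\mapsto E_i$, $B_1\mapsto 1$, $F_i\mapsto 0$, relations (\ref{first})--(\ref{lastA}) hold in $\mathcal{E}_n$ by Remark~\ref{changvar} exactly as you argued, (\ref{quadratictipoB}) becomes $1=1+(\V-\V^{-1})\cdot 0$, (\ref{ef}) becomes $0=0=0$, and every remaining relation involving $B_1$ or the $F_j$'s, namely (\ref{idempotentf}), (\ref{actb}), (\ref{perm}) and the commutations, collapses to a trivial identity; no specialization of $\V$ is needed, the map is defined over $\mathbb{K}$, and surjectivity is immediate since the $T_i$'s and $E_i$'s generate $\mathcal{E}_n$. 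Moreover this value is forced, which confirms your suspicion about the printed statement: once $B_1\mapsto 1$, applying $\varphi$ to (\ref{quadratictipoB}) gives $0=(\V-\V^{-1})\varphi(F_1)$, and since $\V-\V^{-1}$ is invertible in $\mathbb{K}$ we get $\varphi(F_1)=0$, after which (\ref{perm}) propagates this to all $F_j$. The same mechanism appears at the level of $\Y$: in the quotient by $b_1-1$ the quadratic relation for $b_1$ forces $f_1=0$. So your diagnosis of the two failing relations is sound and your verification of the others is fine, but the resolution you propose is the wrong one, and the correct one is simpler than the machinery (specializations, amended targets) you reach for.
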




Now, we recall some useful result from  \cite{rh}. For $i<j$, we define $E_{i,j}$ by
$$E_{i,j}=\left\{\begin{array}{ll}
           E_i  & \text{for $j=i+1$}  \\
           T_i\dots T_{j-2}E_{j-1}T_{j-2}^{-1}\dots T_i^{-1}  &  \text{otherwise}
          \end{array}\right.
$$
For a nonempty subset $J$ of $\mathbf{n}$ we define $E_J=1$ if $|J|=1$ and
$$E_J:=\prod_{(i,j)\in J\times J,\ i<j}E_{i,j}$$
Note that $E_{\{i,j\}}=E_{i,j}$. Also we have by  \cite[Lemma 4]{rh} that
$$E_j=\prod_{j\in J, j\not=i_0}E_{i_0,j},\quad \text{where $i_0={\rm min}(J)$}$$
Moreover, for $I=(I_1,\dots, I_m)\in \mathsf{P}(\mathbf{n})$ we define $E_I$ by
$$E_I=\prod_{k}E_{I_k}.$$
In the same fashion, for any subset $A$ of $\mathbf{n}$ we define
$$F_A=\prod_{i\in A}F_i$$
\smallbreak

\begin{remark}\rm
Notice that by (\ref{perm}) we have that
\begin{equation}\label{fi}
  F_i=T_{i-1}\dots T_1 F_1 T_1^{-1}\dots T_{i-1}^{-1}=T_{i-1}^{-1}\dots T_1^{-1} F_1 T_1\dots T_{i-1}.
\end{equation}

Then, we can omit $F_2,\dots, F_n$ from the presentation (changing/adding certain relations simultaneously!), but we prefer include them, since the computations become simpler by using them. Also note that, using (\ref{perm}), we can obtain a generalization of (\ref{ef}) by conjugating it. More precisely we have
 \begin{equation}\label{efgen}
   E_{i,j}F_i=E_{i,j}F_j=F_iF_j\qquad \text{for all $1\leq i<j\leq n$}
 \end{equation}
\end{remark}

Now, we introduce certain elements that will be used for the construction of a linear basis of $\E$. Let be $\overline{B}_1=B_1$, and for $k\geq 2$ we define
$$\overline{B}_k:=T_{k-1}\dots T_1 B_1 T_1\dots T_{k-1}$$
$$B_k:=T_{k-1}\dots T_1 B_1 T_1^{-1}\dots T_{k-1}^{-1}$$

Further, we considere the sets $\mathsf{M}_{k}$ defined inductively by
$
\mathsf{M}_{1} := \{1, B_1 \}
$
and
$$
\mathsf{M}_{k} := \{1,\ B_k,\ T_{k-1}x\ |\ x\in \mathsf{M}_{k-1}\}
\quad \mbox{for all $2 \leq k\leq n$.}
$$
Analogously, for all $1\leq k\leq n$ we define inductively the sets $\mathsf{N}_{k}$'s exactly like $\mathsf{M}_{k}$'s but exchanging $B_k$ by $\overline{B}_k$ in each case.\smallbreak

Now notice that  every element of $\mathsf{M}_{k}$ has the form $\mathbb{T}_{k,j}^+$ or  $\mathbb{T}_{k,j}^-$ with $j\leq k$,  where
$$
\mathbb{T}_{k,k}^+:=1, \qquad \mathbb{T}_{k,j}^+ := T_{k-1}\cdots T_j\quad \text{for}\ j<k,
$$
and
$$
\mathbb{T}_{k,k}^-:=B_k, \qquad \mathbb{T}_{k,j}^- := T_{k-1}\cdots T_jB_{j}\quad \text{for}\ j<k.
$$

 Similar expressions exist for elements in $\mathsf{N}_{k}$ exchanging $B_k$ by $\overline{B}_k$ as well, which will be denoted by $\mathbb{\overline{T}}_{k,j}^+$ and  $\mathbb{\overline{T}}_{k,j}^-$.

The following results are direct consequence of \cite{fjl}, and these will be used frequently in the sequel

\begin{lemma}\label{relaciones'}For $n\geq 2$ the following relations hold\vspace{2mm}
  \begin{itemize}
   \item[i)]$\overline{\mathbb{T}}^{-}_{n,k}T_j=\left\{\begin{array}{cc}
                                                    T_j\overline{\mathbb{T}}^{-}_{n,k}     & j<k-1 \\
                                                       \overline{\mathbb{T}}^{-}_{n,k-1}+(\U-\U^{-1} )\overline{\mathbb{T}}^{-}_{n,k}E_j   & j=k-1 \\
                                                       \overline{\mathbb{T}}^{-}_{n,k+1}   & j=k \\
                                                     T_{j-1}\overline{\mathbb{T}}^{-}_{n,k}      & j>k
                                                       \end{array}
\right.$\vspace{3mm}
   \item[ii)] $\overline{\mathbb{T}}_{n,k}^{+}T_j=\left\{\begin{array}{cc}
                                                    T_j\overline{\mathbb{T}}^{+}_{n,k}     & j<k-1 \\
                                                 \overline{\mathbb{T}}_{n,k-1}^{+}      & j=k-1 \\
                                        \overline{\mathbb{T}}_{n,k+1}^{+}+(\U-\U^{-1})\overline{\mathbb{T}}_{n,k}^{+}E_j               & j=k \\
                                                     T_{j-1}\overline{\mathbb{T}}^{+}_{n,k}      & j>k
                                                       \end{array}
\right.$ \vspace{3mm}
  \item[iii)] $\overline{\mathbb{T}}^{-}_{n,k} B_1=\left\{\begin{array}{cc}
                                                               \overline{\mathbb{T}}_{n,k}^{+} + (\V-\V^{-1})\overline{\mathbb{T}}_{n,k}^{-}F_1       & \text{for $k=1$} \\
                                                                 B_1\overline{\mathbb{T}}^{-}_{n,k}        &  \text{for $k\not=1$}
                                                                    \end{array}\right.
$\vspace{3mm}
\item[iv)]$\overline{\mathbb{T}}^{+}_{n,k} B_1=\left\{\begin{array}{cc}
                                                               \overline{\mathbb{T}}^{-}_{n,k}      & \text{for $k=1$} \\
                                                                 B_1\overline{\mathbb{T}}^{+}_{n,k}        &  \text{for $k\not=1$}
                                                                    \end{array}\right.$
 \end{itemize}
In particular, we have that
\begin{itemize}
  \item[--] $\overline{B}_nT_j=\left\{\begin{array}{cc}
                                  T_j\overline{B}_n & \text{for $j< n-1$} \\
                                  T_{n-1}\overline{B}_{n-1}+(\U-\U^{-1})\overline{B}_nE_{n-1} & \text{for $j= n-1$}
                                \end{array}\right.
$\vspace{3mm}
  \item[--] $\overline{B}_nT_{n-1}=T_{n-1}\overline{B}_{n-1}+(\U-\U^{-1})\overline{B}_nE_{n-1}$
\end{itemize}

\end{lemma}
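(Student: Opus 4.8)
The plan is to unfold the definitions $\overline{\mathbb{T}}^{+}_{n,k}=T_{n-1}\cdots T_k$ and $\overline{\mathbb{T}}^{-}_{n,k}=T_{n-1}\cdots T_k\,\overline{B}_k$ (with $\overline{\mathbb{T}}^{+}_{n,n}=1$, $\overline{\mathbb{T}}^{-}_{n,n}=\overline{B}_n$) and to verify each identity by a case-by-case manipulation that uses only the defining relations of $\E$. Because the generators $T_i,E_i,F_i,B_1$ satisfy, by construction, exactly the relations holding among their images $g_i,e_i,f_i,b_1$ in $\Y$, every such manipulation is the transcription of the corresponding computation in \cite{fjl}; what must be checked is merely that no identity outside the presentation of $\E$ is ever invoked.

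For (i) and (ii) I would split according to the position of $j$. For $j<k-1$ the factor $T_j$ commutes with each of $T_{n-1},\dots,T_k$ by (\ref{first}); in the minus case it must also commute with $\overline{B}_k$, which follows by a short induction on $k$ from $\overline{B}_k=T_{k-1}\overline{B}_{k-1}T_{k-1}$ and the braid relations (the transcription of the analogous fact for $\overline{b}_k$ in \cite{fjl}), yielding the stated pure commutation. For $j=k-1$ the plus case merely lengthens the descending product to $\overline{\mathbb{T}}^{+}_{n,k-1}$, while the minus case produces two summands: commuting $T_{k-1}$ inward and applying $T_{k-1}^2=1+(\U-\U^{-1})E_{k-1}T_{k-1}$ together with $E_{k-1}T_{k-1}=T_{k-1}E_{k-1}$ gives $\overline{\mathbb{T}}^{-}_{n,k-1}+(\U-\U^{-1})\overline{\mathbb{T}}^{-}_{n,k}E_{k-1}$. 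The case $j=k$ is dual: the plus case uses $T_k^2$ and $E_kT_k=T_kE_k$ to create the term $(\U-\U^{-1})\overline{\mathbb{T}}^{+}_{n,k}E_k$, whereas the minus case is the plain reindexing $\overline{\mathbb{T}}^{-}_{n,k}T_k=\overline{\mathbb{T}}^{-}_{n,k+1}$ coming from $T_k\overline{B}_kT_k=\overline{B}_{k+1}$. For $j>k$ I would commute $T_j$ leftward past the lower-index factors by (\ref{first}) and then apply the braid relation $T_jT_{j-1}T_j=T_{j-1}T_jT_{j-1}$, extracting the leading $T_{j-1}$ as required.

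For (iii) and (iv) the goal is to move $B_1$ across $\overline{\mathbb{T}}^{\pm}_{n,k}$. When $k\neq1$ all factors $T_{n-1},\dots,T_k$ carry index $>1$, so $B_1$ passes through them via $B_1T_i=T_iB_1$; in the minus case one additionally uses $\overline{B}_kB_1=B_1\overline{B}_k$ to pull $B_1$ all the way to the left, giving $B_1\overline{\mathbb{T}}^{-}_{n,k}$. When $k=1$ one has $\overline{B}_1=B_1$: the plus case is the tautology $T_{n-1}\cdots T_1B_1=\overline{\mathbb{T}}^{-}_{n,1}$, and the minus case yields $T_{n-1}\cdots T_1B_1^{2}$, after which the quadratic relation (\ref{quadratictipoB}) and (\ref{actb}) identify the two summands with $\overline{\mathbb{T}}^{+}_{n,1}$ and $(\V-\V^{-1})\overline{\mathbb{T}}^{-}_{n,1}F_1$.

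The step I expect to be the main obstacle is the commutation $\overline{B}_kB_1=B_1\overline{B}_k$ needed in (iii) for $k\neq1$. Unlike the other moves it cannot be read off from a single defining relation, precisely because $B_1$ and $T_1$ do not commute; it is the assertion that the loops around distinct strands commute and is obtained from the type-$\B$ relation $B_1T_1B_1T_1=T_1B_1T_1B_1$ by induction on $k$, exactly as in \cite{fjl}. Granting it, the two ``In particular'' identities are simply the specialization $k=n$ of (i), using $\overline{\mathbb{T}}^{-}_{n,n}=\overline{B}_n$ and $\overline{\mathbb{T}}^{-}_{n,n-1}=T_{n-1}\overline{B}_{n-1}$.
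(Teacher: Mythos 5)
Your proposal is correct and follows essentially the same route as the paper: the paper's proof is simply a citation to \cite[Lemmas 3 and 4]{fjl}, whose computations for $\overline{b}_k$ and $g_i$ in $\Y$ are exactly the case-by-case manipulations you transcribe into $\E$. You also correctly isolate the one point of logical substance behind the citation --- that every step (including the inductively proved commutations $T_j\overline{B}_k=\overline{B}_kT_j$ for $j<k-1$ and $B_1\overline{B}_k=\overline{B}_kB_1$ for $k\neq 1$) uses only the defining presentation of $\E$, so the lemma does not secretly depend on the injectivity of $\varphi_n$, which is established only later.
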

\begin{proof}
  See \cite[Lemma 3 and 4]{fjl}.
\end{proof}
\begin{lemma}\label{relaciones}For $n\geq 2$ the following relations hold\vspace{2mm}
  \begin{itemize}
   \item[i)]$\mathbb{T}^{\pm}_{n,k}T_j=\left\{\begin{array}{cc}
                                                    T_j\mathbb{T}^{\pm}_{n,k}     & j<k-1 \\
                                      \mathbb{T}^{\pm}_{n,k-1}                 & j=k-1 \\
                                                \mathbb{T}^{\pm}_{n,k+1}+(\U-\U^{-1})\mathbb{T}^{\pm}_{n,k}E_j       & j=k \\
                                                     T_{j-1}\mathbb{T}^{\pm}_{n,k}      & j>k
                                                       \end{array}
\right.$\vspace{3mm}

  \item[ii)] $\mathbb{T}^{-}_{n,k} B_1=\left\{\begin{array}{cc}
                                                               \mathbb{T}^{+}_{n,k} + (\V-\V^{-1})\mathbb{T}^{-}_{n,k}F_1       & \text{for $k=1$} \\
                                                                        B_1\mathbb{T}^{-}_{n,k}+ (\U-\U^{-1})\alpha_{n,k} &  \text{for $k\not=1$}
                                                                    \end{array}\right.
$\vspace{3mm}
\item[iii)]$\mathbb{T}^{+}_{n,k} B_1=\left\{\begin{array}{cc}
                                                              \mathbb{T}^{-}_{n,k}      & \text{for $k=1$} \\
                                                                 B_1\mathbb{T}^{+}_{n,k}        &  \text{for $k\not=1$}
                                                                    \end{array}\right.$
 \end{itemize}
where $\alpha_{n,k}=\left(B_1T_{1}^{-1}\dots T_{k-2}^{-1} \mathbb{T}_{n,1}^- E_{1,k}-T_{1}^{-1}\dots T_{k-2}^{-1}\mathbb{T}_{n,1}^- B_1E_{1,k}\right)$. In particular, we have\vspace{2mm}
  \begin{itemize}
    \item[--] $B_nT_{n-1}=T_{n-1}B_{n-1}$\vspace{2mm}
    \item[--]$B_nT_j=T_jB_n$, for all $j<n-1$\vspace{2mm}
   \item[--]$B_nB_1=B_1B_n+ (\U-\U^{-1})[B_1T_{1}^{-1}\dots T_{n-2}^{-1} \mathbb{T}_{n,1}^- E_{1,k}-T_{1}^{-1}\dots T_{n-2}^{-1}\mathbb{T}_{n,1}^-B_1E_{1,k}]$\vspace{2mm}
 \end{itemize}

\end{lemma}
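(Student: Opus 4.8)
The plan is to establish all identities by direct computation inside $\E$, following the same scheme used for the lowercase generators of $\Y$ in \cite{fjl}; every step will invoke only a defining relation of $\E$, so no appeal to a basis or to the faithfulness of $\varphi_n$ is needed. Two elementary facts will be used repeatedly: the conjugation identity $B_{k+1}=T_kB_kT_k^{-1}$ (immediate from the definition of the $B_k$'s) and $T_k^{-1}=T_k-(\U-\U^{-1})E_k$ (obtained from the quadratic relation for $T_k$ together with $E_kT_k=T_kE_k$).

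First I would record the far commutations $B_kT_j=T_jB_k$ for $j\geq k+1$ and for $j\leq k-2$. The case $j\geq k+1$ is immediate, since such a $T_j$ commutes with each of $T_1,\dots,T_{k-1}$ and with $B_1$, hence with every letter of $B_k=T_{k-1}\cdots T_1B_1T_1^{-1}\cdots T_{k-1}^{-1}$. The case $j\leq k-2$ I would prove by induction on $k$ using $B_k=T_{k-1}B_{k-1}T_{k-1}^{-1}$: for $j\leq k-3$ one combines the inductive hypothesis with the commutation of $T_{k-1}$ with $T_j$, and the borderline value $j=k-2$ is settled with the braid relation $T_{k-2}^{-1}T_{k-1}T_{k-2}=T_{k-1}T_{k-2}T_{k-1}^{-1}$, its inverse, and the commutation of $T_{k-1}$ with $B_{k-2}$. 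Granting these, part~(i) follows by writing $\mathbb{T}_{n,k}^{-}=\mathbb{T}_{n,k}^{+}B_k$ and splitting into the four ranges of $j$: for $j<k-1$ and $j>k$ only braid relations and the far commutations are needed; for $j=k-1$ one uses $B_kT_{k-1}=T_{k-1}B_{k-1}$ (a rewriting of $B_k=T_{k-1}B_{k-1}T_{k-1}^{-1}$); and for $j=k$ one expands $T_k^2$ by the quadratic relation in the $+$ case, while in the $-$ case one isolates $T_kB_kT_k$ and applies the identity $T_kB_kT_k=B_{k+1}+(\U-\U^{-1})T_kB_kE_k$, which is $B_{k+1}=T_kB_kT_k^{-1}$ rewritten via $T_k^{-1}=T_k-(\U-\U^{-1})E_k$.

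For part~(iii), the case $k=1$ is the definition $\mathbb{T}_{n,1}^{+}B_1=\mathbb{T}_{n,1}^{-}$, and for $k\neq1$ every factor of $\mathbb{T}_{n,k}^{+}=T_{n-1}\cdots T_k$ has index $\geq2$ and so commutes with $B_1$. For part~(ii) with $k=1$ one computes $\mathbb{T}_{n,1}^{-}B_1=T_{n-1}\cdots T_1B_1^{2}$, expands $B_1^{2}$ by \eqref{quadratictipoB}, and uses $B_1F_1=F_1B_1$ from \eqref{actb} to recognize the second summand as $(\V-\V^{-1})\mathbb{T}_{n,1}^{-}F_1$.

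The hard part is part~(ii) for $k\neq1$, where the loop generator $B_1$ and its conjugate $B_k$ fail to commute. Here I would reduce everything to the commutator $B_kB_1-B_1B_k$: pushing $B_1$ through $B_k=T_{k-1}\cdots T_1B_1T_1^{-1}\cdots T_{k-1}^{-1}$ and using that $B_1$ commutes with $T_2,\dots,T_{k-1}$, the whole obstruction concentrates in the short word $T_1B_1T_1^{\pm1}B_1$ flanked by the braids $T_{k-1}\cdots T_2$ and $T_2^{-1}\cdots T_{k-1}^{-1}$. On this word the quartic relation $B_1T_1B_1T_1=T_1B_1T_1B_1$, together with $T_1^{-1}=T_1-(\U-\U^{-1})E_1$ and $B_1E_1=E_1B_1$, collapses the difference to a single $(\U-\U^{-1})$-multiple of a term in $E_1$; conjugating this term back through the flanking braids and simplifying with the idempotent identities \eqref{efgen} turns $E_1$ into $E_{1,k}$ and reassembles the result into $(\U-\U^{-1})\alpha_{n,k}$. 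I expect the bookkeeping needed to bring this commutator into the exact form of $\alpha_{n,k}$ to be the only genuinely delicate computation. Finally, the three displayed relations are the case $k=n$ of parts~(i) and~(ii): using $\mathbb{T}_{n,n}^{-}=B_n$ and $\mathbb{T}_{n,n}^{+}=1$, the first two are the ranges $j=n-1$ and $j<n-1$ of~(i), and the last is the case $k=n$ (so $k\neq1$) of~(ii).
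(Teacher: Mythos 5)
Your proposal is correct, but it takes a genuinely different route from the paper, whose entire proof of this lemma is the citation ``See \cite[Lemma 5, 6 and 7]{fjl}'': there the same relations are established for the elements $b_k$, $g_i$, $b_1$ of $\Y$, and they transfer to $\E$ because those proofs use only relations that reappear verbatim among the defining relations of $\E$. Your self-contained derivation inside $\E$ is thus a re-proof rather than a shortcut, but it buys a real logical point, which you correctly flag: one cannot simply pull identities back from $\Y$ through $\varphi_n$, since injectivity of $\varphi_n$ is only obtained later (as a corollary of Theorem~\ref{basis}), so the honest content of the paper's citation is precisely the observation you make explicit, namely that every step uses the defining relations alone. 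Your computations check out: the two far commutations, the reduction of (i) to $B_kT_{k-1}=T_{k-1}B_{k-1}$ and to $T_kB_kT_k=B_{k+1}+(\U-\U^{-1})T_kB_kE_k$, and, in the delicate case (ii) with $k\neq 1$, the reduction $\mathbb{T}^{-}_{n,k}B_1-B_1\mathbb{T}^{-}_{n,k}=\mathbb{T}^{+}_{n,k}\left(B_kB_1-B_1B_k\right)$ followed by the collapse $B_kB_1-B_1B_k=(\U-\U^{-1})\,T_{k-1}\cdots T_2\left(B_1T_1B_1-T_1B_1^{2}\right)T_2^{-1}\cdots T_{k-1}^{-1}E_{1,k}$, which indeed reassembles into $(\U-\U^{-1})\alpha_{n,k}$ once one uses $\mathbb{T}^{+}_{n,k}T_{k-1}\cdots T_2=\mathbb{T}^{+}_{n,2}$, $\mathbb{T}^{+}_{n,1}B_1=\mathbb{T}^{-}_{n,1}$ and $\mathbb{T}^{-}_{n,1}T_j^{-1}=T_{j-1}^{-1}\mathbb{T}^{-}_{n,1}$ for $j\geq 2$. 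One attribution should be fixed: the step turning $E_1$ into $E_{1,k}$ across the flanking braid is not an instance of \eqref{efgen} (no $F$'s occur in that computation) but of the conjugation rule $T_wE_IT_w^{-1}=E_{\overline{w}(I)}$ of Lemma~\ref{action}, ultimately relation (\ref{lastA}); with that correction the bookkeeping closes exactly as you predict, including the three displayed special cases as the instances $k=n$ of (i) and (ii).
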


\begin{proof}
 See \cite[Lemma 5,6 and 7]{fjl}.
\end{proof}

\begin{lemma}\label{restantes}
The following claims hold in $\E$.

  \begin{itemize}
    \item[(i)] $T_kB_kB_{k+1}=B_kT_kB_k$,\quad for $k\geq 1$.
     \item[(ii)] $\mathbb{T}^{-}_{k,j}B_k=B_{k-1}\mathbb{T}^{-}_{k,j}$,\quad for $k\geq 2$.
  \end{itemize}
  \end{lemma}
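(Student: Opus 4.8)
The plan is to notice that both identities are controlled by a single \emph{local} braid relation between $T_k$ and $B_k$, namely
$$(\star_k)\qquad T_kB_kT_kB_k=B_kT_kB_kT_k ,$$
which I would prove first and then feed into the two claims. Indeed, from the definition $B_{k+1}=T_kB_kT_k^{-1}$ one gets $T_kB_kB_{k+1}=T_kB_kT_kB_kT_k^{-1}$, so the identity $T_kB_kB_{k+1}=B_kT_kB_k$ of (i) is \emph{equivalent} to $(\star_k)$. Thus claim (i) will follow the instant $(\star_k)$ is established.

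The heart of the argument --- and the step I expect to be the main obstacle --- is proving $(\star_k)$ by induction on $k$. The base case $k=1$ is exactly the defining relation $T_1B_1T_1B_1=B_1T_1B_1T_1$ of $\E$. For the inductive step I would set $a:=T_{k-1}$, $b:=T_k$, $x:=B_{k-1}$, so that $B_k=axa^{-1}$, and use three facts: the braid relation $aba=bab$ (a defining relation of $\E$); the commutation $bx=xb$, valid because $T_k$ commutes with each of $T_1,\dots,T_{k-2}$ and with $B_1$, hence with every factor of $B_{k-1}$; and the inductive hypothesis $(\star_{k-1})$, i.e. $axax=xaxa$. From $aba=bab$ one extracts the rewriting rules $a^{-1}ba=bab^{-1}$ and $b^{-1}a^{-1}b=ab^{-1}a^{-1}$, and a direct monomial reduction of the two sides $b(axa^{-1})b(axa^{-1})$ and $(axa^{-1})b(axa^{-1})b$ collapses each of them to the common word $abxaxab^{-1}a^{-1}$; the reduction of the left-hand side is where $(\star_{k-1})$ enters (through $axa=xaxax^{-1}$), while the right-hand side reduces using only the braid and commutation rules. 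This is a finite purely monomial computation, but it is delicate: it interleaves the two braid relations, the commutation $bx=xb$ and the hypothesis $(\star_{k-1})$ in a precise order, and keeping that bookkeeping correct is the crux of the proof.

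With $(\star_k)$ in hand, claim (i) is immediate as explained above. For claim (ii), with $k\geq2$ fixed and $1\le j\le k-1$, I would argue by downward induction on $j$. The base case $j=k-1$ reads $T_{k-1}B_{k-1}B_k=B_{k-1}T_{k-1}B_{k-1}$, which is precisely claim (i) at index $k-1$. For the inductive step ($j\le k-2$) I would use the factorization $\mathbb{T}^{-}_{k,j}=\mathbb{T}^{-}_{k,j+1}T_j$, which follows from $B_{j+1}=T_jB_jT_j^{-1}$, together with the commutation $T_jB_k=B_kT_j$ for $j<k-1$ supplied by Lemma~\ref{relaciones}. Then
$$\mathbb{T}^{-}_{k,j}B_k=\mathbb{T}^{-}_{k,j+1}T_jB_k=\mathbb{T}^{-}_{k,j+1}B_kT_j=B_{k-1}\mathbb{T}^{-}_{k,j+1}T_j=B_{k-1}\mathbb{T}^{-}_{k,j},$$
where the third equality is the inductive hypothesis and the outer equalities are the factorization. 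Hence claim (ii) reduces entirely to claim (i) plus the elementary commutation relations, so once $(\star_k)$ is settled the remainder is purely formal.
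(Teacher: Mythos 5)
Your proof is correct --- I checked the monomial reduction in detail: with $a=T_{k-1}$, $b=T_k$, $x=B_{k-1}$, the left side $b(axa^{-1})b(axa^{-1})$ reduces via $a^{-1}ba=bab^{-1}$, $bx=xb$ and $bab=aba$ to $ab(axax)b^{-1}a^{-1}$, where $(\star_{k-1})$ converts $axax$ into $xaxa$; the right side reduces via the same rules plus $b^{-1}a^{-1}b=ab^{-1}a^{-1}$ to $abxaxab^{-1}a^{-1}$ with no use of the inductive hypothesis, and the downward induction for (ii) through the factorization $\mathbb{T}^{-}_{k,j}=\mathbb{T}^{-}_{k,j+1}T_j$ is likewise sound. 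But your route is genuinely different from the paper's: the paper gives no computation at all here, disposing of both claims by citation to the earlier work \cite{fjl} (Proposition 2(iv) and Lemma 8(i)), where the analogous identities were established for the algebra $\Y$ and transfer because the defining relations of $\E$ mirror those of $\Y$ used in those computations. Your argument buys a proof entirely internal to $\E$, making no appeal to $\Y$ or to the homomorphism $\varphi_n$ (which is an embedding only for $d\geq n+1$), and it isolates the conceptually clean intermediate statement $(\star_k)\colon T_kB_kT_kB_k=B_kT_kB_kT_k$ --- i.e.\ that each conjugate $B_k$ satisfies the type-$\mathtt{B}$ braid relation with $T_k$ --- of which (i) is a trivial rearrangement and from which (ii) follows formally; the paper's citation is shorter but leaves the reader to chase the computation elsewhere. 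Two small points worth making explicit if you write this up: the commutation $T_kB_{k-1}=B_{k-1}T_k$ should be justified factor-by-factor from $B_{k-1}=T_{k-2}\cdots T_1B_1T_1^{-1}\cdots T_{k-2}^{-1}$ (it is not literally the case $j<n-1$ of Lemma~\ref{relaciones}, which covers $j$ below the index rather than above), and in (ii) you have tacitly, and correctly, restricted to $1\leq j\leq k-1$: at $j=k$ one has $\mathbb{T}^{-}_{k,k}=B_k$ and $B_k^2=1+(\V-\V^{-1})F_kB_k\neq B_{k-1}B_k$, so that boundary case must be excluded from the statement as you read it.
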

  \begin{proof}
    See (iv) \cite[Proposition~2]{fjl} and (i) \cite[Lemma 8]{fjl} respectively.
  \end{proof}

The defining generators $B_1$ and  $T_i$'s of the algebra $\E$ satisfy the same braid relations as the Coxeter generators  $\R_1$ and $\s_i$ of the group $W_n$. Thus, the well--known  Matsumoto's Lemma  implies that  if $w_1\ldots w_m$ is a reduced expression of $w\in W_n$, with $w_i\in \{\R_1, \s_1, \ldots, \s_{n-1}\}$, then the following element $T_w$ is well--defined:
\begin{equation}\label{gw}
T_w := T_{w_1}\cdots  T_{w_m},
\end{equation}
where $T_{w_i} = B_1$, if $w_i=\R_1$ and $T_{w_i} = T_j$, if  $w_i = \s_j$. Therefore, according to \ref{typeB} we have that $\{T_w\ |\ w\in W_n\}=\{\mathfrak{r}_1\dots\mathfrak{r}_n\ |\ \mathfrak{r}_i\in \mathsf{N}_i\}$. In the same fashion, we have a natural bijection $\gamma: \{\mathfrak{m}_1\dots\mathfrak{m}_n\ |\ \mathfrak{m}_i\in \mathsf{M}_i\}\rightarrow W_n$, induced by the map $B_k\mapsto \R_k$, $T_i\mapsto \s_i$.\smallbreak
Let $w\in W_n$, and $\eta:W_n\rightarrow S_n$ the natural projection defined by $\R_1\mapsto 1$ and $\s_i\mapsto s_i$. Since in $\E$ the action of $B_1$ over the elements $F_i$'s and $E_i$'s is trivial, that is, these commute, we have the next result
\begin{lemma}\label{action}Let $w\in W_n$, $I\in \p$ and $A\subseteq \mathbf{n}$, then
  \begin{itemize}
    \item[a)] $T_wE_IT_w^{-1}=E_{\bar{w}(I)}$
    \item[b)] $T_wF_AT_w^{-1}=F_{\bar{w}(A)}$
  \end{itemize}
where $\overline{w}:=\eta(w)$
\end{lemma}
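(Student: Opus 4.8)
The plan is to reduce the two identities to the case where $T_w$ is a single defining generator, and then compose. Fix a reduced expression $w=w_1\cdots w_m$ with $w_i\in\{\R_1,\s_1,\dots,\s_{n-1}\}$, so that $T_w=T_{w_1}\cdots T_{w_m}$ and, since $\eta$ is a group homomorphism, $\bar w=\eta(w_1)\cdots\eta(w_m)$. Because each $T_{w_i}$ is invertible, conjugation by $T_w$ is the composition of the automorphisms $X\mapsto T_{w_i}XT_{w_i}^{-1}$. Hence it suffices to establish, for \emph{every} partition $J\in\p$ and \emph{every} subset $B\subseteq\n$, the one--generator identities $T_{w_i}E_JT_{w_i}^{-1}=E_{\eta(w_i)(J)}$ and $T_{w_i}F_BT_{w_i}^{-1}=F_{\eta(w_i)(B)}$ (they must hold for all $J,B$ because the intermediate terms in the composition are again of this form). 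Applying these $m$ times and collapsing $\eta(w_1)\cdots\eta(w_m)=\bar w$ then yields a) and b); independence of the chosen reduced word is automatic, since $T_w$ is well defined by Matsumoto's Lemma.

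For a generator of the form $T_{w_i}=T_j$ we have $\eta(\s_j)=s_j$. The identity $T_jF_BT_j^{-1}=F_{s_j(B)}$ is immediate from relation (\ref{perm}), which gives $T_jF_iT_j^{-1}=F_{s_j(i)}$ for each $i$, together with the pairwise commutativity of the $F_i$'s in $\E$, so that conjugation distributes over the product defining $F_B$. For $E_J$ I use that the subalgebra of $\E$ generated by the $T_i$'s and $E_i$'s is exactly the bt--algebra $\mathcal{E}_n$ (Remark~\ref{changvar}); thus $T_jE_JT_j^{-1}=E_{s_j(J)}$ is precisely Ryom-Hansen's conjugation formula for $\mathcal{E}_n$, which I may quote directly from \cite{rh}.

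The remaining, and only delicate, case is $T_{w_i}=B_1$, where $\eta(\R_1)=1$; here I must show that $B_1$ commutes with every $E_J$ and every $F_B$. For $F_B=\prod_{i\in B}F_i$ this is immediate from relation (\ref{actb}), since $B_1$ commutes with each $F_i$. For $E_J$ it suffices to check that $B_1$ commutes with each elementary factor $E_{a,b}$, $a<b$. If $a\geq 2$, then $E_{a,b}$ is a product of $T_a,\dots,T_{b-2}$ and $E_{b-1}$, each of which commutes with $B_1$ (the $T_k$'s because $k\geq 2$, and $E_{b-1}$ by (\ref{last})), so $B_1E_{a,b}=E_{a,b}B_1$. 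The genuinely subtle subcase is $a=1$: the standard expression for $E_{1,b}$ involves $T_1$, which does \emph{not} commute with $B_1$. I would get around this by using that $E_{\{1,b\}}$ is independent of how it is produced from $E_1$ via the $S_n$--conjugation of the previous paragraph; applying $T_2,\dots,T_{b-1}$ successively to $E_1=E_{\{1,2\}}$ gives
\[
E_{1,b}=E_{\{1,b\}}=T_{b-1}\cdots T_2\,E_1\,T_2^{-1}\cdots T_{b-1}^{-1},
\]
an expression involving only $E_1$ and the generators $T_2,\dots,T_{b-1}$, all of which commute with $B_1$. Hence $B_1E_{1,b}=E_{1,b}B_1$, and consequently $B_1$ commutes with every $E_J$.

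Combining the three cases and composing along the fixed reduced word proves a) and b). I expect the main obstacle to be exactly the commutation of $B_1$ with the factors $E_{1,b}$: a naive appeal to ``$B_1$ commutes with the generators $E_i$'' is not enough because $E_{1,b}$ is built using $T_1$, and the point is to rewrite it as a conjugate of $E_1$ by a word in $T_2,\dots,T_{b-1}$ (using only the type~$\mathtt A$ conjugation, so no circularity with $B_1$), after which the claim becomes routine.
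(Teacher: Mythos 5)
Your proof is correct and follows essentially the same route as the paper, which simply cites Ryom-Hansen's conjugation formula \cite[Corollary 1]{rh} for a) and the relations (\ref{perm}), (\ref{actb}) for b), relying on the remark that $B_1$ commutes with the $E_i$'s and $F_i$'s. Your treatment is in fact more careful than the paper's: the rewriting $E_{1,b}=T_{b-1}\cdots T_2\,E_1\,T_2^{-1}\cdots T_{b-1}^{-1}$ (valid by the type $\mathtt{A}$ conjugation inside $\mathcal{E}_n\subseteq\E$) correctly justifies the commutation $B_1E_{1,b}=E_{1,b}B_1$, a point the paper asserts implicitly even though the standard expression for $E_{1,b}$ involves $T_1$.
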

\begin{proof}
  For a) see \cite[Corollary 1]{rh}, and for b) the result follows by applying the defining relations (\ref{perm}) and (\ref{actb}).
\end{proof}
\begin{corollary}\label{actionCn}
 Let $v\in \{\mathfrak{m}_1\dots\mathfrak{m}_n\ |\ \mathfrak{m}_i\in \mathsf{M}_i\}$, $I\in \p$ and $A\subseteq \mathbf{n}$, then
  \begin{itemize}
    \item[a)] $vE_I v_{-1}=E_w(I)$
    \item[b)] $vF_A v_{-1}=F_w(A)$
\end{itemize}
where $w=\eta\circ \gamma(v)$.
\end{corollary}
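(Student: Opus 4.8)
The plan is to deduce the corollary from Lemma~\ref{action} by comparing $v$ with the element $T_u$ of the $\mathsf{N}$--family attached to $u:=\gamma(v)\in W_n$, and showing that $v$ and $T_u$ induce the \emph{same} conjugation on every $E_I$ and every $F_A$. Since $\eta(u)=w$ by definition, Lemma~\ref{action} will then give $vE_Iv^{-1}=T_uE_IT_u^{-1}=E_{w(I)}$, and likewise $vF_Av^{-1}=F_{w(A)}$.

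First I would isolate the two instances of Lemma~\ref{action} that drive the argument. Taking $w=\s_j$ gives $T_jE_IT_j^{-1}=E_{s_j(I)}$ and $T_jF_AT_j^{-1}=F_{s_j(A)}$, while taking $w=\R_1$ (so that $T_{\R_1}=B_1$ and $\eta(\R_1)=\mathrm{id}$) gives $B_1E_IB_1^{-1}=E_I$ and $B_1F_AB_1^{-1}=F_A$; that is, $B_1$ commutes with the \emph{whole} family of elements $E_I$ and $F_A$, a fact that is not immediate from the defining relations (which only record $B_1E_i=E_iB_1$) but is exactly the case $w=\R_1$ of the lemma. From this I would deduce that every loop element $B_k$ also conjugates these elements trivially: writing $B_k=QB_1Q^{-1}$ with $Q=T_{k-1}\cdots T_1$ (so that $Q^{-1}=T_1^{-1}\cdots T_{k-1}^{-1}$), the computation $B_kE_IB_k^{-1}=Q\,B_1(Q^{-1}E_IQ)B_1^{-1}Q^{-1}=Q(Q^{-1}E_IQ)Q^{-1}=E_I$ uses the first fact to rewrite $Q^{-1}E_IQ$ as an $E$--term and the second to pass $B_1$ across it; the same holds for $F_A$. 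Its companion $\overline{B}_k=T_{\R_k}$ conjugates trivially as well, directly by Lemma~\ref{action}, since $\eta(\R_k)=\mathrm{id}$.

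Next I would run the comparison factor by factor. Write $v=\mathfrak{m}_1\cdots\mathfrak{m}_n$ with $\mathfrak{m}_i\in\mathsf{M}_i$, and $T_u=\mathfrak{r}_1\cdots\mathfrak{r}_n$ with $\mathfrak{r}_i\in\mathsf{N}_i$ the factor corresponding to $\gamma(\mathfrak{m}_i)\in\mathtt{N}_i$ under the uniqueness of the $\mathtt{N}_i$--decomposition of $W_n$. Each $\mathfrak{m}_i$ equals $\mathbb{T}^{+}_{i,j}=T_{i-1}\cdots T_j$ or $\mathbb{T}^{-}_{i,j}=T_{i-1}\cdots T_jB_j$, and $\mathfrak{r}_i$ is obtained by replacing the loop factor $B_j$ with $\overline{B}_j$. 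By the previous paragraph the loop factors $B_j$ and $\overline{B}_j$ conjugate $E_I$ and $F_A$ trivially, so conjugation by $\mathfrak{m}_i$ and by $\mathfrak{r}_i$ both collapse to conjugation by the common braiding part $T_{i-1}\cdots T_j$, hence act on $\{E_I\}$ and on $\{F_A\}$ by the \emph{same} permutation $s_{i-1}\cdots s_j=\eta(\gamma(\mathfrak{m}_i))$. Composing over $i$ and using that $\gamma$ carries the product $\mathfrak{m}_1\cdots\mathfrak{m}_n$ to $\gamma(\mathfrak{m}_1)\cdots\gamma(\mathfrak{m}_n)$ and that $\eta$ is a group homomorphism, I obtain $vE_Iv^{-1}=T_uE_IT_u^{-1}$ and $vF_Av^{-1}=T_uF_AT_u^{-1}$, whence the result by Lemma~\ref{action}.

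The argument presents no deep obstacle, but two points require care. The first is the bookkeeping of conventions: one must check that the left $S_n$--action on $\p$ composes in the order induced by the product $\mathfrak{m}_1\cdots\mathfrak{m}_n$, which is ensured by the compatibility of $\eta\circ\gamma$ with concatenation. The second, and the genuinely non-formal input, is the identity $B_1E_I=E_IB_1$ for \emph{arbitrary} $I\in\p$: the defining relations only give it for the generators $E_i$, so one really must invoke the case $w=\R_1$ of Lemma~\ref{action} (equivalently, that $B_1$ corresponds to a conjugation by $\R_1$ acting as the identity permutation) to extend the commutation to all $E_I$ and thence to all $B_k$.
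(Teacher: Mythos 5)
Your proposal is correct and follows essentially the same route as the paper, which states the corollary without a separate proof as an immediate consequence of Lemma~\ref{action}: each $\mathsf{M}_i$--factor of $v$ differs from the corresponding $\mathsf{N}_i$--factor of $T_{\gamma(v)}$ only in the loop element, and the loop elements conjugate every $E_I$ and $F_A$ trivially ($\overline{B}_k=T_{\R_k}$ directly from the lemma since $\eta(\R_k)=1$, and $B_k$ because it is a conjugate of $B_1$ by a product of $T_j$'s). Your factor-by-factor comparison, including the correct observation that $B_1E_I=E_IB_1$ for arbitrary $I\in\p$ is the $w=\R_1$ case of Lemma~\ref{action} rather than a defining relation, is exactly the paper's implicit reasoning made explicit.
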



\subsection{} \textbf{Diagrams for $\E$.} We associate to each word in the algebra $\E$, a tied braid of type $\mathtt{B}_n$ according the following identifications. For $n\geq 1$ we associate the unit of $\E$ with the trivial braid of type $\mathtt{B}_n$, $B_1$ with the \lq\lq loop\rq\rq generator, and $F_j$ with the braid of type $\mathtt{B}_n$ whose has a tied between the fixed strand and the j--th moving strand. For $n\geq 2$, as in the classical case we associate $T_i$ with the usual braid generator, and $E_i$ with the $\mathtt{B}$--type braid whose has a tied between the i-th and i+1-st moving strand.  We can see this identification in the following figure

\begin{figure}[h]
  \centering
  \includegraphics{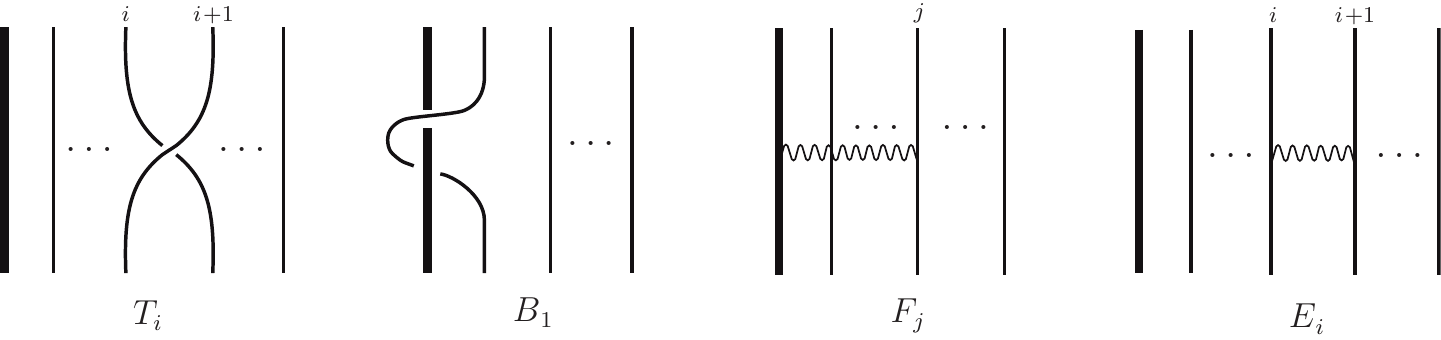}
  \caption{Diagrammatical interpretation of the generators of $\E$}\label{generatorsfig}
\end{figure}

We consider the multiplication of diagrams by concatenation, that is, given two diagrams $d_1$ and $d_2$, the multiplication $d_1d_2$ is the diagram that result from putting the diagram $d_2$ below to the diagram $d_1$, obtaining pictures like in Figure~\ref{relation}. Thus, we have a diagrammatical interpretation for every word in $\E$.

\begin{figure}[h]
  \centering
  \includegraphics{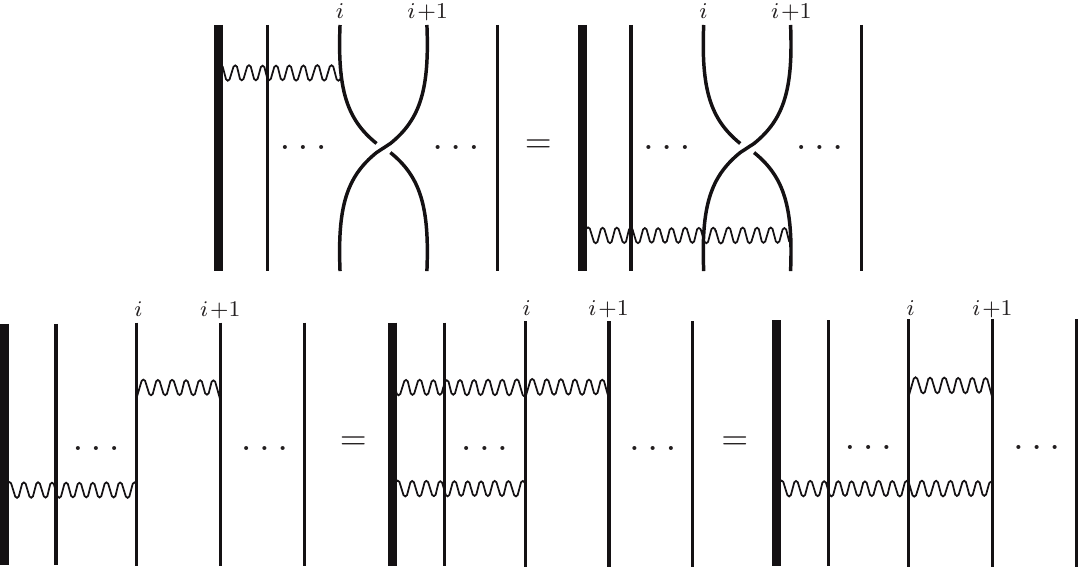}
  \caption{Relations (\ref{perm}) and (\ref{ef}) in terms of diagrams}\label{relation}
\end{figure}

\begin{remark}\rm
  The previous identification provide an epimorphism $\delta$, from the algebra $\E$ to an algebra of diagrams, which will be denoted by $\widetilde{\E}$. This algebra is generated by the elements in Figure~\ref{generatorsfig}, and satisfies the defining relations of $\E$ rewritten as diagram relations, for instance see Figure~\ref{relation}. As we would expected, the ties of the elements of $\widetilde{\E}$ keep all the properties explained in \cite[Section 6]{aijuMMJ}, that is, the \emph{elasticity, transparency} and \emph{transitivity}. For example, the elasticity and transparency properties, for ties involving the moving strands are inherited by the relations in common with the bt-algebra, and for the ties attached to the fixed strand are guaranteed by relation (\ref{perm}), see Figure~\ref{elast}. On the other hand, the transitivity property is a consequence of Proposition~\ref{bijection} proven in the next section.\\
In the same fashion, we can conjecture that the homomorphism $\delta$ is, in fact, an isomorphism. However, to give a formal proof of this fact, is in general a problem itself, for example see \cite{morton}. Therefore we will continue without proving it, since we would deviate of our main goal.
\end{remark}

\begin{figure}[h]
  \centering
  \includegraphics{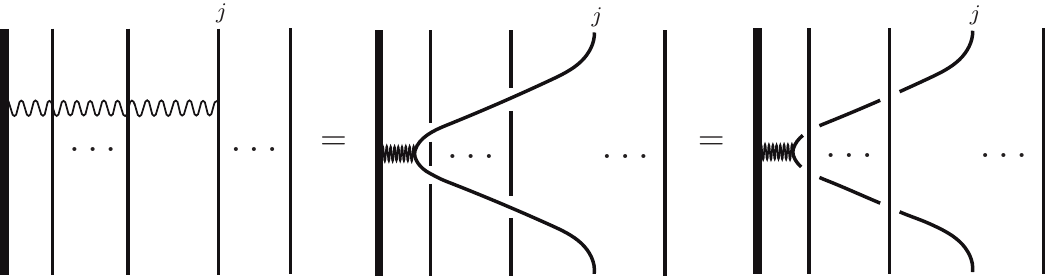}
  \caption{Equation (\ref{fi}) expressed in diagrams}\label{elast}
\end{figure}

\section{A linear basis for $\E$}
In this section we introduce two linear bases for the algebra $\E$. One of them will be used to define a Markov trace in the next section, and the other for proving the linear independence of the first one. Additionally we give a faithful tensorial representation of the algebra $\E$ based in the constructed for $\Y$ in \cite[Section 3]{fjl}.\smallbreak
We begin the section proving a technical result, which will be useful to set properly our base.\\

\noindent Let be $I,I'\in \p$ and $A,A'\in \n$. Note that, the elements $E_IF_A, E_{I'}F_{A'}\in \E$ can be equal even when $I\not=I'$ and $A\not=A'$. For instance, let $I=(\{2,3,5\},\{4,6\}), I'=(\{4,6\})\in \p$ and $A=\{3\}, A'=\{2,3,5\}$. Then, we have
\begin{eqnarray*}
  E_IF_A &=& E_{2,3}E_{2,5}E_{4,6}F_3 \\
   &=&F_3 E_{2,3}E_{2,5}E_{4,6}  \\
   &=&F_3F_2E_{2,5}E_{4,6} \\
   &=&F_2F_3F_5E_{4,6}  \\
   &=&E_{I'}F_{A'}
\end{eqnarray*}
by (\ref{efgen}). Using the same idea we can prove the following result
\begin{proposition}\label{bijection}
  The set $\mathcal{A}_n=\{E_IF_A\ |\ I\in \p, A\subseteq \mathbf{n}\} $ is parameterized by $\pcero$. That is, there is a bijection between the sets $\pcero$ and $\mathcal{A}_n$.
\end{proposition}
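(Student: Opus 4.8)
The plan is to exhibit an explicit bijection $\Phi\colon\pcero\to\mathcal{A}_n$ and to identify $\mathcal{A}_n$ with a set of normal forms. Call a pair $(I,A)$ with $I\in\p$ and $A\subseteq\n$ \emph{reduced} if $\mathrm{Supp}(I)\cap A=\emptyset$, i.e.\ no block of $I$ of size $>1$ meets $A$. Reduced pairs are in evident bijection with $\pcero$: to a partition $J\in\pcero$ I associate the pair $(I_J,A_J)$, where $A_J=J_0\setminus\{0\}$ for $J_0$ the block of $J$ containing $0$, and $I_J$ is the partition of $\n$ whose non-single blocks are the blocks of $J$ not containing $0$. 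I then set $\Phi(J)=E_{I_J}F_{A_J}\in\mathcal{A}_n$, and the goal is to prove $\Phi$ is bijective.

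For surjectivity I would put an arbitrary element $E_IF_A$ of $\mathcal{A}_n$ into reduced form using only (\ref{ef}) and (\ref{efgen}). The essential local identity is that, for any block $B$ of $I$ and any $a\in B$, one has $E_BF_a=F_B$: splitting $E_B=\bigl(\prod_{j\in B\setminus\{a\}}E_{\{a,j\}}\bigr)E_{B\setminus\{a\}}$ and applying $E_{\{a,j\}}F_a=F_aF_j$ repeatedly, together with the commutativity and idempotency of the $E$'s and $F$'s, collapses the whole block into $F_B$ and absorbs the remaining factors. Hence every block of $I$ meeting $A$ can be swallowed by the $F$'s, giving $E_IF_A=E_{I''}F_{A'}$ with $A'=A\cup\bigcup\{B\in I:\ |B|>1,\ B\cap A\neq\emptyset\}$ and $I''$ the partition retaining exactly the non-single blocks of $I$ that avoid $A$. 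As $(I'',A')$ is reduced, it equals $(I_J,A_J)$ for a unique $J$, so $E_IF_A=\Phi(J)$.

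The crux is injectivity: distinct reduced pairs must give distinct elements of $\E$. Since no amount of formal rewriting can prove two words unequal, I would push everything into $\Y$ via the homomorphism $\varphi_n$ of Proposition~\ref{homoEY}, fixing any $d\geq2$. Inside the commutative subalgebra generated by $t_1,\dots,t_n$, which is the group algebra of $(\mathbb{Z}/d\mathbb{Z})^n$, relation (\ref{th}) shows that conjugation by the $g_k$'s permutes the $t_l$'s, whence $\varphi_n(E_{\{i,j\}})=\tfrac1d\sum_{s}t_i^st_j^{d-s}$ and $\varphi_n(F_k)=\tfrac1d\sum_s t_k^s$. Evaluating at a character $t_l\mapsto\zeta_l\in\mu_d$, the image $\varphi_n(E_IF_A)$ is the indicator function of $S_{I,A}=\{\zeta:\ \zeta_k=1\ \text{for }k\in A,\ \zeta_i=\zeta_j\ \text{whenever }i,j\ \text{lie in a common block of }I\}$. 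For a reduced pair one reads $A$ off as $\{k:\zeta_k=1\ \text{for all }\zeta\in S_{I,A}\}$ and recovers the blocks of $I$ as the classes of the relation \lq\lq$\zeta_i=\zeta_j$ on all of $S_{I,A}$\rq\rq; the reduced condition $\mathrm{Supp}(I)\cap A=\emptyset$ is exactly what guarantees the separating characters exist already for $d\geq2$. Thus distinct reduced pairs give distinct subsets $S_{I,A}$, hence distinct idempotents in $\Y$, so their preimages are distinct in $\E$ and $\Phi$ is injective.

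I expect this final step to be the main obstacle: one must be sure that the normal form is an intrinsic invariant and not an artefact of the chosen reductions, and the clean way to certify this is the computation in $\Y$, where the reduced condition translates precisely into the recoverability of $(I,A)$ from the set $S_{I,A}$.
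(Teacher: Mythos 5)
Your proof is correct, and its normal--form half is essentially the paper's argument: the paper's map $\Psi$ sends $J\in\pcero$ to exactly your reduced pair $(I_J,A_J)$, and its key computation $E_{I_r}F_A=F_{I_r}F_{A\backslash I_r}=F_{A\cup I_r}$, obtained from (\ref{efgen}) and (\ref{idempotentf}), is precisely your block--collapsing identity $E_BF_a=F_B$; so the surjectivity step coincides. Where you genuinely diverge is injectivity. The paper simply exhibits $\varphi(E_IF_A)=I*A^0$ and asserts $\varphi\circ\Psi=\mathrm{Id}$ and $\Psi\circ\varphi=\mathrm{Id}$; but, as you correctly point out, formal rewriting cannot show that distinct reduced pairs give distinct elements of $\E$, and for $\varphi$ to be well defined on algebra elements one needs exactly that separation. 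In the paper this is only certified later and implicitly, by the linear independence of the $EF_I$'s in Theorem~\ref{basis}, where the faithful tensor representation plays the separating role via the indicator formulas (\ref{actE}) and (\ref{actF}). Your alternative --- pushing through $\varphi_n$ of Proposition~\ref{homoEY} and evaluating at characters of the framing torus, so that $\varphi_n(E_IF_A)$ becomes the indicator of $S_{I,A}$, from which the reduced pair is recovered already for $d\geq 2$ --- is the same separation idea in a more economical and self--contained form: it makes Proposition~\ref{bijection} independent of the later basis theorem, whereas the paper's route gets distinctness for free once Theorem~\ref{basis} is in place. Two small points to make explicit in your version: first, your argument presupposes that $t_1,\dots,t_n$ span a $d^n$--dimensional group algebra inside $\Y$ (so that characters separate its elements); this follows from the basis $\mathsf{C}_n$ of $\Y$ recalled in Section 2. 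Second, when $|A|\geq 2$ the relation ``$\zeta_i=\zeta_j$ on all of $S_{I,A}$'' merges all of $A$ into a single class, which is not a block of $I$; since you read off $A$ first as the forced coordinates, the recovery is still valid, but you should say that the blocks of $I$ are the nontrivial classes \emph{disjoint from} $A$, rather than all classes of the relation.
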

\begin{proof}

Let be $I=(I_1,\dots,I_m)$ a partition in $\pcero$. We define $\Psi:\pcero\rightarrow \mathcal{A}_n$
as follows
$$\Psi(I)=\left\{\begin{array}{ll}
           E_I  & \text{if $0\not\in I_j$ for all $1\leq j \leq m$}  \\
           E_{I\backslash I_k} F_{I_k^*} & \text{if $0\in I_k$ for some $1\leq k\leq m$ }
          \end{array}\right.$$

where $I\backslash I_r$ denote the partition obtained by removing the block $I_k$ from $I$. For the other hand, let be $I\in \p$, $A\in \mathbf{n}$ we define $\varphi:\mathcal{A}_n\rightarrow \pcero$ as follows
$$\varphi(E_IF_A)=I*A^0$$
where $A^0=A\cup \{0\}$ and $I$ is considered as a element of $\pcero$ (since $\mathcal{P}(\n)\subseteq\mathcal{P}(\n_0)$). It is not difficult to prove that $\varphi\circ \Psi=Id_{\pcero}$ and $\Psi\circ\varphi=Id_{\mathcal{A}_n}$. In fact, let say that $I=(I_1,\dots,I_k)$, without lose generality consider that $I_{1},\dots, I_{r}$ are the blocks of $I$ that have intersection with $A$, and $I_{r+1},\dots, I_{k}$ the blocks that don't have. By definition, we have that
$$I*A^0=(C,I_{r+1},\dots, I_{k})$$
where $C=A^0\cup I_{1}\cup \dots\cup I_{r} $. Thus, $\Psi(I*A^0)=F_{C^*}E_{I\backslash A}$ where $I\backslash A$ is the partition $(I_{r+1},\dots, I_{k})$ in $\p$.\\

Then, it is enough to prove that
\begin{equation}\label{*}
F_{C^*}E_{I\backslash A}=E_IF_A.
\end{equation}

First, recall that for any block $I_r$, we have that
$$E_{I_r}=\prod_{j\not=i_0, j\in I_r} E_{i_0,j},\qquad \text{where $i_0={\rm min}I_r$}$$
Thus, (\ref{efgen}) implies that
$$E_{I_r}F_j=E_{I_r}F_{i_0}$$
for any $j\in A\cap I_r$. Therefore, more generally using (\ref{idempotentf}) we have
\begin{eqnarray*}
  E_{I_r}F_A &=& E_{I_r}F_{i_0}F_{A\backslash I_r} \\
   &=& \prod_{j\not=i_0} E_{i_0,j}F_{i_0}F_{A\backslash I_r} \\
   &=& \prod_{j\not=i_0}F_{i_0}F_j F_{A\backslash I_r} \\
  &=& F_{I_r} F_{A\backslash I_r}=F_{A\cup I_r}
\end{eqnarray*}
Finally, (\ref{*}) follows by using this argument for every block that has intersection with $A$. The converse can be proven from an analogous way.
\end{proof}

From now on, given a partition $I\in \pcero$ we will denote the element $\Psi(I)$  by $EF_{I}$.

\begin{proposition}\label{span}
 The set $\mathcal{B}_n=\{EF_IT_w\ |\ I\in \pcero \text{ {\rm and} } w\in W_n\} $ spans the algebra $\E$.
\end{proposition}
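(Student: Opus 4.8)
The plan is to let $V$ denote the $\mathbb{K}$-linear span of $\mathcal{B}_n$ and to show that $V$ is closed under right multiplication by each of the generators $E_i$, $F_i$, $T_i$ and $B_1$ of $\E$. Since the identity already lies in $\mathcal{B}_n$ — take $I$ to be the partition of $\n_0$ into singletons, so that $EF_I=1$, together with $w=1$, giving $T_w=1$ — such closure forces $V=\E$: every element of $\E$ is a $\mathbb{K}$-combination of words in the generators, and right-multiplying $1\in V$ successively by generators keeps us inside $V$ (by induction on word length), whence $\E\subseteq V\subseteq\E$. Two tools drive every case: Lemma~\ref{action}, which transports an idempotent $E_K$ or $F_A$ across $T_w$ at the cost of applying $\overline{w}$ to its support, and Proposition~\ref{bijection}, which guarantees that any product of $E$- and $F$-idempotents collapses to a single $EF_{I'}$ with $I'\in\pcero$.

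The idempotent generators are immediate. For $E_i=E_{\{i,i+1\}}$, Lemma~\ref{action}(a) gives $T_wE_i=E_{\overline{w}(\{i,i+1\})}T_w$, hence
$$EF_I\,T_w\,E_i=\big(EF_I\,E_{\overline{w}(\{i,i+1\})}\big)\,T_w,$$
and the parenthesized product of idempotents equals some $EF_{I'}$ by Proposition~\ref{bijection}, so the whole expression is $EF_{I'}T_w\in V$. The case of $F_i$ is identical using Lemma~\ref{action}(b), with $E_{\overline{w}(\{i,i+1\})}$ replaced by $F_{\overline{w}(i)}$.

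For the braid and loop generators I would run the standard Coxeter length analysis, regarding $T_i$ and $B_1$ as the Hecke-type lifts of the generators $\s_i$ and $\R_1$ of $W_n$. If $\ell(w\s_i)>\ell(w)$, then appending $\s_i$ to a reduced word for $w$ stays reduced, so $T_wT_i=T_{w\s_i}$ and $EF_I\,T_w\,T_i\in V$ directly. If $\ell(w\s_i)<\ell(w)$, write $w=w'\s_i$ with $\ell(w')=\ell(w)-1$, so $T_w=T_{w'}T_i$; applying the quadratic relation $T_i^2=1+(\U-\U^{-1})E_iT_i$ followed by Lemma~\ref{action} yields
$$T_wT_i=T_{w'}+(\U-\U^{-1})\,E_{\overline{w'}(\{i,i+1\})}\,T_w,$$
whose two summands are $T_{w'}$ (already of the required form) and an idempotent times $T_w$. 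Left-multiplying by $EF_I$ and absorbing idempotents via Proposition~\ref{bijection} keeps us in $V$. The loop generator is handled the same way, using $\ell(w\R_1)$, the quadratic relation (\ref{quadratictipoB}), and the identity $T_{w'}F_1=F_{\overline{w'}(1)}T_{w'}$ from Lemma~\ref{action}(b).

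The delicate point, and the step I expect to require the most care, is the loop generator $B_1$: one must confirm that the Hecke-type multiplication rules $T_wT_s=T_{ws}$ (length-increasing) and the quadratic reduction (length-decreasing) remain valid in $\E$ when $s=\R_1$. This rests on the well-definedness of $T_w$ furnished by Matsumoto's Lemma and on careful control of reduced expressions in the type-$\mathtt{B}$ group $W_n$, in particular the normal forms recorded in \ref{typeB} and the commutation relations of Lemmas~\ref{relaciones'} and~\ref{relaciones}. Once the four closure statements are verified, the spanning of $\E$ by $\mathcal{B}_n$ follows.
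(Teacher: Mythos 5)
Your proposal is correct, but it takes a genuinely different route from the paper. The paper proves spanning by induction on $n$ along the tower $\mathcal{E}_{n-1}^{\mathtt B}\subseteq\E$: it writes $T_w$ in the inductive normal form $x\,\mathbb{T}^{\pm}_{n,k}$ with $x\in\langle B_1,T_1,\dots,T_{n-2}\rangle$, splits off the idempotent factor carrying the index $n$ (an $E_{n,i}$ or an $F_n$, according to whether $n$ lies in the block of $0$ or not), and then uses the commutation formulas of Lemma~\ref{relaciones'} to push an arbitrary generator $Y$ through $\mathbb{T}^{\pm}_{n,k}$, landing on terms $E_{n,i}(X)\mathbb{T}^{\pm}_{n,k'}$ or $F_n(X)\mathbb{T}^{\pm}_{n,k'}$ with $X\in\mathcal{E}_{n-1}^{\mathtt B}$, to which the induction hypothesis applies. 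You instead run the classical Iwahori--Hecke length induction: $1\in V$, closure of $V$ under right multiplication by each generator, with the dichotomy $\ell(ws)\gtrless\ell(w)$, the quadratic relations, and Lemma~\ref{action} to transport idempotents leftward. Both arguments are sound. Your route is more uniform and self-contained: it treats $T_i$ and $B_1$ identically, and indeed the ``delicate point'' you flag is not delicate at all --- $\R_1$ is a Coxeter generator of $W_n$, so the exchange condition gives a reduced expression of $w$ ending in $\R_1$ whenever $\ell(w\R_1)<\ell(w)$, and Matsumoto's well-definedness of $T_w$ (which the paper records at (\ref{gw})) makes $T_wB_1=T_{w\R_1}$ or the quadratic reduction valid verbatim; no appeal to Lemmas~\ref{relaciones'} or~\ref{relaciones} is needed. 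What the paper's heavier route buys is structural: the normal forms $\mathbb{T}^{\pm}_{n,k}$ and the commutation relations of Lemma~\ref{relaciones'} are exactly the machinery reused to prove that $\mathcal{C}_n$ also spans and, crucially, to compute the relative traces $\vartheta_n$ in Section~5, so the paper's proof doubles as a warm-up for those calculations. One small imprecision in your write-up: Proposition~\ref{bijection} literally asserts a bijection $\pcero\leftrightarrow\mathcal{A}_n$, not multiplicative closure; the collapse $EF_I\,E_K=EF_{I'}$ (and $EF_I\,F_j=EF_{I'}$) follows rather from the relations underlying its proof --- $E_IE_J=E_{I*J}$ from \cite{rh} together with (\ref{efgen}) and (\ref{idempotentf}) --- a fact the paper itself uses freely in Section~5 in the form $EF_J\,EF_I=EF_{J*I}$; you should cite those relations rather than the proposition itself.
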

\begin{proof}
We proceed as in \cite[Proposition 4]{fjl}, that is, we will prove by induction that $\mathfrak{B}_n$, the  linear subspace of $\E$ spanned by $\mathcal{B}_n$ is equal to $\E$. The assertion is clear for $n=1$. Assume now that $\mathcal{E}_{n-1}^{\mathtt B}$ is spanned by $\mathfrak{B}_{n-1}$. Notice that $1\in  \mathfrak{B}_n$. This fact and proving that $\mathfrak{B}_n$ is a right ideal, implies the proposition. Now, we deduce that $\mathfrak{B}_n$ is a right ideal from the hypothesis induction and  Lemma \ref{relaciones'}. Indeed, let $EF_IT_w$ a element in $\mathcal{B}_n$ and $Y$ a generator of $\E$, and suppose that $n\in {\rm Supp}(I)$, we have that $EF_IT_wY$ is equal to

$$E_{n,i}EF_{I\backslash n}x\mathbb{T}_{n,k}^{\pm}Y \quad\text{or}\quad F_nEF_{I\backslash n}x\mathbb{T}_{n,k}^{\pm}Y,\ \text{where $x\in \langle B_1,T_1\dots,T_{n-2}\rangle$, and $i={\rm min}(I_r)$, with $n\in I_r$.}$$
depending if $n$ belongs to $I_0$ or not. Then, using the relations from Lemma \ref{relaciones'} we can convert this expressions in a linear combination of elements of the form
$$E_{n,i}(X)\mathbb{T}_{n,k'}^{\pm} \quad\text{or}\quad F_n(X)\mathbb{T}_{n,k'}^{\pm},\qquad \text{ where $X\in \mathcal{E}_{n-1}^{\mathtt B}$.}$$
thus, the result follows by applying the induction hypothesis. The case when $n\not \in {\rm Supp}(I)$ will be omitted, since it follows by analogous way.

\end{proof}

Now, we will focus in proving the linear independence of the set $\mathcal{B}_n$, to achieve that, we will readjust the arguments used by Ryom-Hansen in \cite{rh}. For that, we use a tensorial representation of $\E$, which is obtained by restricting the representation of $\Y$ constructed in \cite{fjl}, considering $d=n+1$, to the subalgebra $\varphi_n(\E)$. \smallbreak

More precisely, let $V$ be a  ${\Bbb K}$--vector space with basis $\mathfrak{B}=\{v_i^r\, ;\,  i\in X_n,\, 0\leq r\leq n\} $.  As usual we denote by $\mathfrak{B}^{\otimes k}$ the natural  basis
of  $V^{\otimes k}$ associated to $\mathfrak{B}$. That is, the elements of $\mathfrak{B}^{\otimes k}$  are of the form:
$$
v_{i_1}^{m_1}\otimes\cdots\otimes v_{i_k}^{m_k}
$$
where $(i_1, \ldots , i_k)\in X_n^k$ and $(m_1, \ldots , m_k)\in \mathbf{n}^k$.\smallbreak

We define the endomorphisms $\mathbf{F}, \mathbf{B}:V\rightarrow V$ by:
$$
(v_{i}^{r})\mathbf{B}=
\left\{\begin{array}{cl}
 v_{-i}^{r} & \text{for}\quad i>0\ \text{and}\ r=0,\\
 v_{-i}^{r}+(\V-\V^{-1})v_{i}^{r}  &\text{for}\quad i<0\ \text{and}\ r=0,\\
  v_{-i}^{r} & \text{for}\quad r\not=0.
\end{array} \right.
$$
and
$$
(v_{i}^{r})\mathbf{F}=\left\{\begin{array}{cc}
 0 & r>0\\
 v_{i}^{r} & r=0\end{array}\right.$$

On the other hand we define $\mathbf{T}, \mathbf{E}:V\otimes V\rightarrow V\otimes V$ by
$$
(\vnor)\mathbf{T}=\left\{\begin{array}{cl}
\U\vinv & \text{ for}\quad i=j\  \text{and } \  r=s,\\
\vinv & \text{ for}\quad i< j\  \text{and } \  r=s,\\
\vinv+ (\U-\U^{-1})\vnor & \text{ for}\quad i>j\ \text{and }\  r=s,\\
\vinv & \text{ for}\quad  r\not=s.
\end{array} \right.
$$
 and
$$
(v_i^r\otimes v_j^s)\mathbf{E}=\left\{\begin{array}{cc}
 0 & r\not=s\\
 v_i^r\otimes v_j^s & r=s
\end{array} \right.
$$

For all $1\leq i \leq n-1$, $1\leq j\leq n$ we extend these endomorphisms to the endomorphisms $\mathbf{E}_i$, $\mathbf{T}_i$, $\mathbf{B}_1$, $\mathbf{F}_j$ of the $n$--th tensor power $V^{\otimes n}$ of $V$, as follows:
$$
\begin{array}{ll}
 \mathbf{E}_i:=1_V^{\otimes(i-1)}\otimes \mathbf{E}\otimes  1_V^{\otimes(n-i-1)}  &,\quad  \mathbf{B}_1:=\mathbf{B}\otimes 1_V^{\otimes(n-1)}, \\
  \mathbf{T}_i:=1_V^{\otimes(i-1)}\otimes \mathbf{T} \otimes 1_V^{\otimes(n-i-1)} &,\quad  \mathbf{F}_j:=1_V^{\otimes(j-1)}\otimes \mathbf{F}\otimes  1_V^{\otimes(n-j)}
\end{array}
$$

where $1_V^{\otimes k}$ denotes the endomorphism identity of $V^{\otimes k}$. \smallbreak

\begin{theorem}\label{tensor}
The mapping   $B_1\mapsto \mathbf{B}_1$, $T_i\mapsto \mathbf{T}_i$, $E_i\mapsto \mathbf{E}_i$ and $F_i\mapsto \mathbf{F}_i$ defines a representation $\Phi$ of $\E$  in ${\rm End}(V^{\otimes n})$.
\end{theorem}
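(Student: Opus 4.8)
The plan is to realize $\Phi$ as a composite of two maps that are already known (or easily seen) to be algebra homomorphisms, thereby reducing the verification of all the defining relations (\ref{first})--(\ref{ef}) to a short computation. Concretely, let $\pi\colon\Y\to{\rm End}(V^{\otimes n})$ be the tensorial representation of $\Y$ constructed in \cite{fjl}, specialized to $d=n+1$; for this value of $d$ the framing index runs over $0\le r\le n$, so the underlying space is exactly the $V^{\otimes n}$ with basis $\mathfrak{B}^{\otimes n}$ used here. Since $\pi$ is an algebra homomorphism and, by Proposition~\ref{homoEY}, $\varphi_n\colon\E\to\Y$ is an algebra homomorphism, the composite $\Phi:=\pi\circ\varphi_n$ is automatically an algebra homomorphism $\E\to{\rm End}(V^{\otimes n})$. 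It therefore suffices to check that $\Phi$ sends the four families of generators to the prescribed operators, i.e.\ that $\pi(b_1)=\mathbf{B}_1$, $\pi(g_i)=\mathbf{T}_i$, $\pi(e_i)=\mathbf{E}_i$ and $\pi(f_j)=\mathbf{F}_j$.

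The first two equalities are immediate: the operators $\mathbf{B}$ and $\mathbf{T}$ displayed above are precisely the operators attached to $b_1$ and $g_i$ in the representation of \cite{fjl} (restricted to $0\le r\le n$), so nothing is needed beyond matching definitions. The real content lies in the two idempotents. Recall $f_j=\frac1d\sum_{s=0}^{d-1}t_j^s$ and $e_i=\frac1d\sum_{s=0}^{d-1}t_i^st_{i+1}^{d-s}$, and that in $\pi$ the framing generator $t_k$ acts diagonally on the $k$-th tensor slot by $v^{r}\mapsto\zeta^{r}v^{r}$, where $\zeta$ is a fixed primitive $d$-th root of unity. Using the orthogonality identity $\frac1d\sum_{s=0}^{d-1}\zeta^{sm}=1$ if $m\equiv 0\pmod d$ and $0$ otherwise, together with the constraint $0\le r\le d-1=n$, one finds that $\pi(f_j)$ fixes a basis vector when its $j$-th framing index equals $0$ and annihilates it otherwise, and that $\pi(e_i)$ fixes a basis vector when $r_i=r_{i+1}$ and annihilates it otherwise (here $\zeta^{d}=1$ cancels the $t_{i+1}^{d}$ contribution, leaving $\frac1d\sum_s\zeta^{s(r_i-r_{i+1})}$). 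These are exactly the defining formulas for $\mathbf{F}$ and $\mathbf{E}$, whence $\pi(f_j)=\mathbf{F}_j$ and $\pi(e_i)=\mathbf{E}_i$.

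With the four generator images matched, $\Phi=\pi\circ\varphi_n$ is a representation of $\E$ carrying $B_1,T_i,E_i,F_i$ to $\mathbf{B}_1,\mathbf{T}_i,\mathbf{E}_i,\mathbf{F}_i$, as claimed. The only genuinely delicate point is the idempotent computation of the previous paragraph, which is where the choice $d=n+1$ enters crucially: it is precisely this specialization that makes the range $0\le r\le n$ of the framing index coincide with the residues modulo $d$, so that $f_j$ and $e_i$ act as the honest projections $\mathbf{F}_j$ and $\mathbf{E}_i$ rather than as more complicated operators. If one prefers to avoid invoking $\pi$ altogether, the same conclusion can instead be reached by verifying relations (\ref{first})--(\ref{ef}) directly on the operators $\mathbf{B}_1,\mathbf{T}_i,\mathbf{E}_i,\mathbf{F}_j$; this is elementary but considerably longer, and the projection computation above remains the heart of the matter in either approach.
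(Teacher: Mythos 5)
Your proof is correct and is essentially the paper's own argument: the paper proves Theorem~\ref{tensor} simply by citing \cite[Theorem 1]{fjl}, i.e.\ by restricting the tensorial representation $\pi$ of $\Y$ with $d=n+1$ along the homomorphism $\varphi_n$ of Proposition~\ref{homoEY}, which is exactly your composite $\Phi=\pi\circ\varphi_n$, with your orthogonality computation for $\pi(e_i)$ and $\pi(f_j)$ merely making explicit what the citation leaves implicit. One small caveat on emphasis: the projection property itself holds for any $d$ once the framing indices range over $\{0,\dots,d-1\}$ --- the specialization $d=n+1$ is what makes $\pi$ act on the particular $V^{\otimes n}$ fixed here (and is later essential for the linear-independence argument of Theorem~\ref{basis}), not what makes $e_i$ and $f_j$ act as projections.
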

\begin{proof}
  It is a consequence of \cite[Theorem 1]{fjl}.
\end{proof}

Further, we have
\begin{proposition}\label{aplicacion} {\rm (See \cite[Proposition 3]{fjl})}
Let $w\in W_n$ parameterized by $(m_1,\dots,m_n)\in X_n^n$. Then
$$
(v_1^{r_1}\otimes \dots \otimes v_n^{r_n})\Phi_{w}=v_{m_1}^{r_{\vert m_1\vert}}\otimes \dots \otimes v_{m_n}^{r_{\vert m_n\vert}}.
$$
where $\Phi_w$ denotes $\Phi(T_w)$.
\end{proposition}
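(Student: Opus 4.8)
The plan is to prove the formula by induction on the length $\ell(w)$, peeling off the \emph{last} letter of a reduced expression and exploiting the fact that reducedness forces the tensorial operators $\mathbf{T}_j$ and $\mathbf{B}_1$ to act in their correction--free cases, where no summand of the form $(\U-\U^{-1})(\cdots)$ or $(\V-\V^{-1})(\cdots)$ appears. Recall that $T_w$ is well defined independently of the chosen reduced expression by Matsumoto's Lemma (see the discussion around (\ref{gw})), so we are free to fix one. The base case $w=1$ is immediate, since then $(m_1,\dots,m_n)=(1,\dots,n)$ and $\Phi_1$ is the identity.

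For the inductive step, write a reduced expression $w=w_1\cdots w_m$, put $s:=w_m\in\{\R_1,\s_1,\dots,\s_{n-1}\}$ and $w':=w_1\cdots w_{m-1}$, so that $\ell(w')=\ell(w)-1$ and $T_w=T_{w'}T_s$, whence $\Phi_w=\Phi_{w'}\Phi(T_s)$. Writing $m'_i:=w'(i)$, the induction hypothesis gives
$$(v_1^{r_1}\otimes\cdots\otimes v_n^{r_n})\Phi_{w'}=v_{m'_1}^{r_{|m'_1|}}\otimes\cdots\otimes v_{m'_n}^{r_{|m'_n|}},$$
and it remains to apply $\Phi(T_s)$ to the right--hand side and to check that the outcome is indexed by $w=w'\cdot s$, i.e.\ by $m_i=w(i)=w'(s(i))$.

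The heart of the argument is the matching between reducedness and the clean branches of the operators, which is where the type $\mathtt{B}_n$ combinatorics enters. If $s=\s_j$, then $\Phi(T_s)=\mathbf{T}_j$ acts on tensor slots $j,j+1$; here $m_j=m'_{j+1}$, $m_{j+1}=m'_j$ and $m_i=m'_i$ otherwise, so we must produce a pure transposition of these two slots. The condition $\ell(w'\s_j)>\ell(w')$ is equivalent to $m'_j=w'(j)<w'(j+1)=m'_{j+1}$ with respect to the total order $-n<\cdots<-1<1<\cdots<n$ on $X_n$; since $w'$ is a bijection these two lower indices are distinct and strictly increasing, so the pair $(v_{m'_j}^{r_{|m'_j|}},v_{m'_{j+1}}^{r_{|m'_{j+1}|}})$ falls into the branch with increasing lower indices and equal framings, or into the branch with unequal framings, both of which produce the plain swap with no extra summand (and never the diagonal or decreasing branches). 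This yields exactly $v_{m_j}^{r_{|m_j|}}\otimes v_{m_{j+1}}^{r_{|m_{j+1}|}}$. If instead $s=\R_1$, then $\Phi(T_s)=\mathbf{B}_1$ acts on the first slot, with $m_1=-m'_1$ and $m_i=m'_i$ for $i\geq 2$; now $\ell(w'\R_1)>\ell(w')$ is equivalent to $m'_1=w'(1)>0$, so $v_{m'_1}^{r_{|m'_1|}}$ lies in the branch $i>0$ of the definition of $\mathbf{B}$, giving the clean image $v_{-m'_1}^{r_{|m'_1|}}=v_{m_1}^{r_{|m_1|}}$, with the framing $r_{|m'_1|}=r_{|m_1|}$ unchanged. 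In both cases the result agrees with the claimed formula for $w$, completing the induction.

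The main obstacle is precisely the two descent criteria invoked above, namely $\ell(w\s_j)>\ell(w)\Leftrightarrow w(j)<w(j+1)$ and $\ell(w\R_1)>\ell(w)\Leftrightarrow w(1)>0$; these are the standard characterizations of right descents for the Coxeter system of type $\mathtt{B}_n$ realized as signed permutations of $X_n$, and one must align the composition convention $w=w'\cdot s\mapsto w'(s(\cdot))$ with the parametrization $m_i=w(i)$ so that removing the last (rather than the first) letter is the correct reduction. Once these combinatorial facts are in place, no case ever triggers the quadratic corrections, and $\Phi_w$ acts on the distinguished vectors $v_1^{r_1}\otimes\cdots\otimes v_n^{r_n}$ exactly as the signed relabelling of lower indices together with the induced permutation of framings. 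This is the $\mathtt{B}$--analogue of \cite[Proposition 3]{fjl}, and the argument parallels it.
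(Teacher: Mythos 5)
Your proof is correct, but it is worth noting that the paper itself contains no internal argument for this proposition: it is stated with the annotation ``See \cite[Proposition 3]{fjl}'', i.e.\ it is imported wholesale from the companion paper, where the analogous statement is proved for the tensorial representation of $\Y$ (of which $\Phi$ is the restriction along $\varphi_n$, with $d=n+1$). So you have supplied a self-contained argument where the paper relies on an external one. Your mechanism is the right one and all the delicate points check out: the induction on $\ell(w)$ peeling the last letter is legitimate since a prefix of a reduced word is reduced and $T_w=T_{w'}T_s$ by Matsumoto; the diagonal branch of $\mathbf{T}$ (the $\U$-scaling case $i=j$, $r=s$) is excluded because $w'$ is a bijection of $X_n$, so $m'_j\neq m'_{j+1}$; the correction branch ($i>j$, $r=s$) is excluded by the descent criterion $\ell(w'\s_j)>\ell(w')\Leftrightarrow w'(j)<w'(j+1)$ in the order $-n<\cdots<-1<1<\cdots<n$; and the $(\V-\V^{-1})$-correction branch of $\mathbf{B}$ is excluded by $\ell(w'\R_1)>\ell(w')\Leftrightarrow w'(1)>0$, which covers both the $r=0$ and $r\neq 0$ cases since the latter is clean regardless of sign. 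You were also right to single out the alignment of conventions as the genuine pitfall: with the paper's parametrization $m_i=w(i)$ and the right action of operators on tensors, the product must be read as $(w's)(i)=w'(s(i))$, and a length-two sanity check (say $w=\R_1\s_1$ versus $w=\s_1\R_1$, which give $(m_1,m_2)=(2,-1)$ and $(-2,1)$ respectively) confirms your choice is the one under which the stated formula holds. What the citation buys the paper is brevity and consistency with \cite{fjl}; what your induction buys is an explicit explanation of \emph{why} the distinguished vectors $v_1^{r_1}\otimes\cdots\otimes v_n^{r_n}$ never trigger the quadratic corrections, which is precisely the structural fact that Theorem~\ref{basis} later exploits.
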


Let $I=(I_1,\dots,I_m)\in \p$ and $A\subseteq \n$, we will denote $\Phi(E_I)$ and $\Phi(F_A)$ by $\mathbf{E}_I$ and $\mathbf{F}_A$ respectively. We know by \cite{fjl} that $\mathbf{E}_I$ acts over $V^{\otimes n}$ as follows
\begin{equation}\label{actE}
  (\vdimn)\mathbf{E}_I=\left\{\begin{array}{ll}
 0 & \text{if there exist $i,j,k$ such that $i,j\in I_k$ y $r_j\not=r_j$,}\\
 \vdimn & \text{otherwise}
\end{array} \right.
\end{equation}

In the same fashion, it is not difficult to prove that
\begin{equation}\label{actF}
 (\vdimn)\mathbf{F}_A=\left\{\begin{array}{ll}
 0 & \text{if there exist $i\in A$ such that $r_i\not=0$}\\
 \vdimn & \text{otherwise}
\end{array} \right.
\end{equation}
Now, we have all the necessary to prove the main result of this section
\begin{theorem}\label{basis}
  The set $\mathcal{B}_n$ is a basis of $\E$. In particular, the dimension of $\E$ is $b_{n+1}2^nn!$
\end{theorem}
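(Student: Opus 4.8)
The strategy is to combine the spanning result (Proposition~\ref{span}) with a linear-independence argument routed through the faithful tensorial representation $\Phi$ of Theorem~\ref{tensor}. Since Proposition~\ref{span} already shows that $\mathcal{B}_n=\{EF_IT_w\ |\ I\in\pcero,\ w\in W_n\}$ spans $\E$, it suffices to prove these elements are linearly independent; the dimension count then follows because $|\pcero|=b_{n+1}$ (the Bell number of $\mathbf{n}_0$, which has $n+1$ elements) and $|W_n|=2^n n!$, giving $\dim\E = b_{n+1}\,2^n n!$ as claimed. The whole weight of the theorem therefore sits in showing independence, and the tensorial representation is the tool for that.

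First I would suppose a nontrivial linear relation $\sum_{I,w}c_{I,w}\,EF_I T_w = 0$ in $\E$ and apply $\Phi$ to obtain a relation among the operators $\Phi(EF_I)\Phi_w$ on $V^{\otimes n}$. The plan is then to separate the sum by how each term acts on a carefully chosen family of basis vectors of $V^{\otimes n}$. Using Proposition~\ref{aplicacion}, the element $\Phi_w$ sends a pure tensor $v_1^{r_1}\otimes\cdots\otimes v_n^{r_n}$ to $v_{m_1}^{r_{|m_1|}}\otimes\cdots\otimes v_{m_n}^{r_{|m_n|}}$, where $(m_1,\dots,m_n)\in X_n^n$ parameterizes $w$; this is precisely the ingredient that lets me read off $w$ from the image, because the \emph{signs} of the $m_i$ encode the loop/sign data of $W_n$ and the \emph{absolute values} encode the underlying permutation. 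The idempotent factor $\Phi(EF_I)=\mathbf{E}_{I'}\mathbf{F}_{A}$ acts diagonally, killing a tensor unless its superscripts $(r_1,\dots,r_n)$ are constant on the non-single blocks of $I'$ and equal to $0$ on $A$, as recorded in (\ref{actE}) and (\ref{actF}). The key device, exactly as in Ryom-Hansen's argument, is to feed in a vector $v_1^{r_1}\otimes\cdots\otimes v_n^{r_n}$ whose superscript pattern $(r_1,\dots,r_n)$ is \emph{generic} relative to a fixed $I_0$ — i.e. the $r_i$ are constant exactly on the blocks of $I_0$, take value $0$ exactly on the block containing $0$, and are otherwise distinct — so that $\Phi(EF_{I})$ acts as zero on this vector for every $I$ strictly finer than needed, isolating the terms indexed by that single $I_0$.

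After this separation by $I$, the remaining task is to show that within a fixed $I_0$ the operators $\{\Phi_w\}_{w\in W_n}$ act linearly independently on the surviving subspace. Here I would exploit that, having chosen the $r_i$ with prescribed multiplicities, the images $v_{m_1}^{r_{|m_1|}}\otimes\cdots\otimes v_{m_n}^{r_{|m_n|}}$ for distinct $w$ land in distinct basis vectors of $V^{\otimes n}$: two group elements $w\neq w'$ differ either in some underlying permutation entry $|m_i|\neq|m_i'|$ or in some sign, and because the representation is the $d=n+1$ specialization there are enough distinct frames $v_i^r$ to distinguish all $2^n n!$ signed permutations simultaneously. Reading off coefficients vector-by-vector forces all $c_{I_0,w}=0$, and running this over every $I_0\in\pcero$ kills the whole relation.

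The main obstacle I anticipate is the bookkeeping in the separation step, where two distinct partitions $I\neq I'$ of $\mathbf{n}_0$ can yield the \emph{same} idempotent $EF_I=EF_{I'}$ in $\E$ — exactly the collision phenomenon resolved by Proposition~\ref{bijection}. That proposition is what guarantees the parameterization by $\pcero$ is genuinely a bijection, so I must use it to ensure each distinct index $I\in\pcero$ contributes a genuinely distinct diagonal operator and that the generic superscript vector I choose is adapted to $I$ in the unique normal form $\Psi(I)$. Getting the interplay right between the block containing $0$ (which forces superscript $0$ via the $\mathbf{F}$-factor) and the remaining blocks (which only force \emph{equality} of superscripts via the $\mathbf{E}$-factor) is the delicate point; once the generic vector is correctly matched to $\Psi(I_0)$, the rest is the same diagonalization-plus-frame-distinctness argument as in \cite[Theorem 3]{rh}.
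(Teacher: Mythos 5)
Your proposal follows the paper's proof in all essentials: spanning is quoted from Proposition~\ref{span}, linear independence is obtained by applying the representation $\Phi$ of Theorem~\ref{tensor} to a putative relation and evaluating on a vector $v^{I}$ whose superscripts are constant on the blocks of a fixed $I\in\pcero$ (value $0$ on the block containing $0$, distinct values on distinct blocks), and the surviving coefficients are killed because, by Proposition~\ref{aplicacion}, distinct $w\in W_n$ send $v^{I}$ to distinct basis vectors of $V^{\otimes n}$; the count $\dim\E=b_{n+1}2^{n}n!$ then falls out of $|\pcero|=b_{n+1}$ and $|W_n|=2^{n}n!$, exactly as in the paper.

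One caution about your separation step, phrased as \lq\lq $\Phi(EF_I)$ acts as zero on this vector for every $I$ strictly finer than needed, isolating the terms indexed by that single $I_0$\rq\rq: the direction is backwards. By (\ref{actE}) and (\ref{actF}), $\mathbf{EF}_J$ annihilates $v^{I_0}$ precisely when some block of $J$ meets two distinct blocks of $I_0$, or its $0$--block is not contained in the $0$--block of $I_0$; every $J$ that \emph{refines} $I_0$ --- including strict refinements, e.g.\ the all-singleton partition, for which $EF_J=1$ --- acts as the identity on $v^{I_0}$ and therefore survives the evaluation. So evaluating at $v^{I_0}$ yields a sum over all $J\preceq I_0$, not over $J=I_0$ alone. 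The standard repair is to choose $I_0$ minimal in the refinement order among partitions carrying a nonzero coefficient (equivalently, to induct along $\preceq$), after which the surviving strict refinements contribute zero and your frame-distinctness argument closes the proof; note that the paper's own write-up elides this same triangularity point, so your proposal is at the paper's level of rigor here. A second, smaller misattribution: the hypothesis $d=n+1$ is not what distinguishes the $2^{n}n!$ signed permutations --- those are already separated by the subscript tuple $(m_1,\dots,m_n)$ for any $d$ --- rather it guarantees enough superscript values $0,1,\dots,n$ to build $v^{I}$ with one value per block of a partition of $\n_0$, i.e.\ it is needed for the separation by $I$, exactly as the paper's remark following the theorem indicates.
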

\begin{proof}
  We only have to prove that $\mathcal{B}_n$ is a linear independent set, since it was already proven in Proposition~\ref{span} that it spans $\E$. Let be $I=(I_0,I_1,\dots,I_m)$ a element in $\pcero$ considering the single blocks in its expression. Without lose of generality set $I_0$ as the block that contains 0, then, we define $v^I\in V^{\otimes n}$ as follows
$$v^I:=v_1^{r_1}\otimes \dots \otimes v_n^{r_n},\quad \text{with $r_i=k$ if $i\in I_k$}$$
Suppose now that
\begin{equation}\label{suma}
  \sum_{J\in \pcero , w\in W_n} \lambda_{J,w}EF_{J}T_w=0
\end{equation}
Then, given $I\in \pcero$, if we apply $\Phi$ and evaluate $v^I$ in (\ref{suma}), we will obtain

$$ \sum_{J\in \pcero , w\in W_n} \lambda_{J,w}(v^I)(\mathbf{EF}_{I}\Phi_w)  = 0$$
thus, using (\ref{actE}) and (\ref{actF}) we have
\begin{eqnarray}
  \sum_{ w\in W_n} \lambda_{I,w}(v^I)\Phi_w  &=& 0 \nonumber\\
 \sum_{ w\in W_n} \lambda_{I,w}(v_1^{r_1}\otimes \dots \otimes v_n^{r_n})\Phi_w  &=& 0 \nonumber \\
 \sum_{} \lambda_{I,w}v_{m_1}^{r_{\vert m_1\vert}}\otimes \dots \otimes v_{m_n}^{r_{\vert m_n\vert}}  &=& 0\label{sumateo}
\end{eqnarray}
with $(m_1,\dots, m_n)$ running in $X_n^n$. But, this elements are L.I in $V^{\otimes n}$, then $\lambda_{I,w}=0$ for all $w\in W_n$. Finally as $I$ was picked arbitrarily the result follows.
\end{proof}

\begin{corollary}
  The representation $\Phi$ is faithful.
\end{corollary}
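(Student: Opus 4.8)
The plan is to obtain faithfulness as an immediate consequence of the linear-independence computation already carried out in the proof of Theorem~\ref{basis}, rather than by any new argument. Since Theorem~\ref{basis} asserts that $\mathcal{B}_n = \{EF_I T_w \mid I \in \pcero,\ w \in W_n\}$ is a basis of $\E$, every $x \in \E$ has a unique expansion $x = \sum_{J \in \pcero,\ w \in W_n} \lambda_{J,w}\, EF_J T_w$. To show $\ker \Phi = 0$ it therefore suffices to prove that $\Phi(x) = 0$ forces every coefficient $\lambda_{J,w}$ to vanish.

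First I would observe that $\Phi(x) = 0$ is precisely the operator identity $\sum_{J,w} \lambda_{J,w}\, \Phi(EF_J)\,\Phi_w = 0$ in $\mathrm{End}(V^{\otimes n})$, which is exactly the hypothesis (\ref{suma}) fed through $\Phi$ in the proof of Theorem~\ref{basis}. I would then rerun that same evaluation: for an arbitrary $I \in \pcero$, applying the left-hand side to the vector $v^I$ and invoking the action formulas (\ref{actE}) and (\ref{actF}) annihilates every summand with $J \neq I$, since $v^I$ is constructed precisely so that $\Phi(EF_J)$ fixes it when $J = I$ and kills it otherwise. This leaves $\sum_{w \in W_n} \lambda_{I,w}\,(v^I)\Phi_w = 0$, and Proposition~\ref{aplicacion} turns each $(v^I)\Phi_w$ into a distinct natural basis tensor of $V^{\otimes n}$ labelled by the parameter $(m_1,\dots,m_n)\in X_n^n$ of $w$; the linear independence of these tensors yields $\lambda_{I,w} = 0$ for all $w$, and since $I$ was arbitrary every coefficient vanishes.

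The point I would stress is that the proof of Theorem~\ref{basis} nowhere uses the relation $x = 0$ in $\E$ beyond passing it through $\Phi$: what it genuinely establishes is that the endomorphism $\Phi(x)$ recovers all coordinates $\lambda_{J,w}$ of $x$ in the basis $\mathcal{B}_n$. Hence the corollary is essentially a reinterpretation of that separation of coefficients, and I expect no genuine obstacle; the only thing to be careful about is phrasing the argument so that it begins from the operator-level hypothesis $\Phi(x)=0$ rather than from an equality in $\E$. Accordingly I would conclude at once that $\ker\Phi = 0$, i.e. $\Phi$ is faithful, citing Theorem~\ref{basis} for the computation itself.
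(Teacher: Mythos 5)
Your proposal is correct and is essentially the paper's own argument: the corollary is stated as an immediate consequence of Theorem~\ref{basis}, whose proof already shows that evaluating $\Phi$ of a linear combination on the vectors $v^I$ separates all coefficients $\lambda_{J,w}$, which is exactly the statement $\ker\Phi=0$ once one expands $x\in\E$ in the basis $\mathcal{B}_n$. One minor caveat: your assertion that $\Phi(EF_J)$ kills $v^I$ for every $J\neq I$ overstates the action formulas (\ref{actE}) and (\ref{actF}), since $\Phi(EF_J)$ in fact fixes $v^I$ for every $J$ refining $I$; but this mirrors the gloss in the paper's own proof of Theorem~\ref{basis} and is repaired by the standard induction along the refinement order, so it introduces no gap beyond what the paper itself leaves implicit.
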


\begin{remark}\label{tower}
  Since  $\mathcal{B}_{n}\subseteq \mathcal{B}_{n+1}$, it follows that  $\E\subseteq \mathcal{E}_{n+1}^{\mathtt{B}}$, for all $n\geq 1$. Thus, by taking $\mathcal{E}_{0}^{\mathtt{B}}:= {\Bbb K}$, we have the following tower of algebras.
\begin{equation}\label{towereq}
  \mathcal{E}_0^{\B} \subseteq \mathcal{E}_1^{\B}\subseteq \cdots \subseteq \E \subseteq \mathcal{E}_{n+1}^{\mathtt{B}} \subseteq \cdots
\end{equation}
\end{remark}

\begin{remark}\rm
  Recall that in the original definition of $\Phi$ in \cite{fjl}, $V$ is considered as the vector space with basis $\mathcal{B}=\{v_i^r\, ;\,  i\in X_n,\, 0\leq r\leq d-1\} $. Thus, the condition $d=n+1$ it is essential for proving Theorem~\ref{basis}, in fact, if we suppose $d\leq n$, we could obtain a sum of linear dependent elements in Eq. (\ref{sumateo}). Moreover, to get a sum of linear independent elements, it is enough to consider $d\geq n+1$, which is contained in the next result.
\end{remark}

\begin{corollary}
  Suppose that $d\geq n+1$. Then the homomorphism $\varphi_n:\E \mapsto \Y$ defined in Proposition~\ref{homoEY} is an embedding.
\end{corollary}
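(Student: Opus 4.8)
The plan is to exploit the fact that the tensorial representation $\Phi$ of Theorem~\ref{tensor} already factors through $\varphi_n$, and then to upgrade its faithfulness from $d=n+1$ to the whole range $d\geq n+1$.

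First I would make the factorization explicit. Write $\rho_d\colon \Y \to \mathrm{End}(V_d^{\otimes n})$ for the tensorial representation of ${\rm Y}_{d,n}^{\mathtt B}$ constructed in \cite[Section 3]{fjl}, where $V_d$ has basis $\{v_i^r \,;\, i\in X_n,\ 0\leq r\leq d-1\}$. By construction the endomorphisms $\mathbf{B}_1,\mathbf{T}_i,\mathbf{E}_i,\mathbf{F}_j$ appearing before Theorem~\ref{tensor} are exactly $\rho_d(b_1),\rho_d(g_i),\rho_d(e_i),\rho_d(f_j)$, so the composite $\Phi_d:=\rho_d\circ\varphi_n$ sends each generator of $\E$ to the same endomorphism prescribed by $\Phi$; for $d=n+1$ this is literally the representation $\Phi$ of Theorem~\ref{tensor}. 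The upshot is the standard remark that injectivity of a composite forces injectivity of the inner map: if $x\in\E$ satisfies $\varphi_n(x)=0$, then $\Phi_d(x)=\rho_d(\varphi_n(x))=0$, so faithfulness of $\Phi_d$ immediately yields injectivity of $\varphi_n$.

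Next I would verify that $\Phi_d$ is faithful for every $d\geq n+1$, and not only for $d=n+1$. The key observation is that each vector $v^I$ attached to a partition $I\in\pcero$ uses only framings drawn from $\{0,1,\dots,n\}$, since a set-partition of the $(n+1)$-element set $\n_0$ has at most $n+1$ blocks. Hence, as soon as $d\geq n+1$, all the vectors $v^I$ and their images $(v^I)\Phi_w$ lie in $V_d^{\otimes n}$ and are governed by the very same action formulas (\ref{actE}), (\ref{actF}) and Proposition~\ref{aplicacion}, which do not depend on $d$. I would then rerun the argument of Theorem~\ref{basis} verbatim: starting from a vanishing combination $\sum_{J,w}\lambda_{J,w}EF_JT_w=0$, apply $\Phi_d$, evaluate at $v^I$, use (\ref{actE}) and (\ref{actF}) to isolate the terms with $J=I$, and read off $\lambda_{I,w}=0$ from the linear independence of the tuples $v_{m_1}^{r_{\vert m_1\vert}}\otimes\dots\otimes v_{m_n}^{r_{\vert m_n\vert}}$ in $V_d^{\otimes n}$. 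Since $I$ is arbitrary, $\Phi_d$ is injective.

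Combining the two steps gives the corollary: for $d\geq n+1$ the map $\Phi_d=\rho_d\circ\varphi_n$ is injective, so $\varphi_n\colon\E\to\Y$ is an embedding. I do not expect a serious obstacle, as both ingredients are in hand; the one point deserving care is confirming that the linear independence of the tuples $v_{m_1}^{r_{\vert m_1\vert}}\otimes\dots\otimes v_{m_n}^{r_{\vert m_n\vert}}$ genuinely persists when $d$ is strictly larger than $n+1$. This holds because enlarging the range of the framing index only adjoins new basis vectors to $V_d$ and can create no coincidence among the ones already used; this is exactly the phenomenon flagged in the preceding remark, so the corollary is in effect a restatement of the faithfulness of the tensorial representation throughout the range $d\geq n+1$.
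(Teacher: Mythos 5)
Your proposal is correct and follows essentially the same route as the paper: the paper's proof also reduces the claim to the linear independence of $\varphi_n(\mathcal{B}_n)$ in $\Y$, invoking the faithfulness of the tensorial representation of $\Y$ from \cite{fjl} together with the argument of Theorem~\ref{basis}, whose validity for all $d\geq n+1$ is exactly the point flagged in the remark preceding the corollary. Your explicit factorization $\Phi_d=\rho_d\circ\varphi_n$ and the observation that the vectors $v^I$ only involve framings in $\{0,1,\dots,n\}$ merely spell out details the paper leaves implicit.
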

\begin{proof}
  It is enough to prove that the set $A=\varphi_n(\mathcal{B}_n)$ is linear independent in $\Y$. Now, we know that $\Phi$ is faithful (\cite[Corollary ]{fjl}), and we proved in Theorem~\ref{basis} that $\Phi(A)$ is L.I, then, the result follows.
\end{proof}

\begin{corollary}
  The set $\mathcal{C}_n=\{EF_I( {\mathtt m}_1\dots {\mathtt m}_n) \ |\ I\in \pcero, {\mathtt m}_i\in \mathsf{M}_i\}$ is also a basis for $\E$.
\end{corollary}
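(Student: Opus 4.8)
The plan is to deduce this from Theorem~\ref{basis} by a dimension count combined with a linear independence argument that parallels the proof of that theorem. First I would note that $\gamma$ is a bijection between $\{\mathtt{m}_1\cdots\mathtt{m}_n \mid \mathtt{m}_i \in \mathsf{M}_i\}$ and $W_n$, so that $|\mathcal{C}_n| = |\pcero|\cdot|W_n| = b_{n+1}2^n n!$, which by Theorem~\ref{basis} is exactly $\dim\E$. Hence it suffices to prove that $\mathcal{C}_n$ is linearly independent.

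To do this I would mirror the argument of Theorem~\ref{basis} using the faithful representation $\Phi$. Starting from a vanishing combination $\sum_{J,v}\lambda_{J,v}EF_J v = 0$, with $v$ ranging over the $\mathsf{M}$-products, I would apply $\Phi$ and evaluate at the distinguished vector $v^I$ for a fixed $I\in\pcero$. As in Theorem~\ref{basis}, the actions (\ref{actE}) and (\ref{actF}) force $\Phi(EF_J)$ to annihilate $v^I$ unless $J=I$, reducing the identity to $\sum_v \lambda_{I,v}(v^I)\Phi(v) = 0$. The whole proof then rests on showing that the vectors $\{(v^I)\Phi(v)\}$, as $v$ runs over the $\mathsf{M}$-products, are linearly independent in $V^{\otimes n}$.

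The key step, and the main obstacle, is to establish the analogue of Proposition~\ref{aplicacion} for the $\mathsf{M}$-products. Whereas $T_w$ is built from the elements $\overline{B}_k$, a product $v = \mathtt{m}_1\cdots\mathtt{m}_n$ is built from the $B_k$, and the two differ by replacing the right conjugating factors $T_1\cdots T_{k-1}$ with $T_1^{-1}\cdots T_{k-1}^{-1}$. Using $T_i^{-1} = T_i - (\U-\U^{-1})E_i$ together with the commutation rules of Lemmas~\ref{relaciones'} and \ref{relaciones}, each such replacement produces the same leading permutation term as $\overline{B}_k$ plus correction terms carrying extra idempotent factors $E_{i,j}$ and shorter Weyl elements. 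Consequently I expect $(v^I)\Phi(v) = v_{m_1}^{r_{|m_1|}}\otimes\cdots\otimes v_{m_n}^{r_{|m_n|}} + (\text{lower order terms})$, where $(m_1,\dots,m_n)\in X_n^n$ parameterizes $w=\gamma(v)$ and the lower order terms have index tuples strictly smaller in a fixed total order on $X_n^n$; controlling precisely these corrections, in particular the $(\V-\V^{-1})$ summand in the action of $\mathbf{B}$ on $v_i^r$ with $i<0$, is the delicate part.

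Granting this, the leading tuples $(m_1,\dots,m_n)$ are pairwise distinct for distinct $v$, since $v\mapsto w=\gamma(v)\mapsto (m_1,\dots,m_n)$ is injective; hence the matrix expressing $\{(v^I)\Phi(v)\}$ in the basis $\mathfrak{B}^{\otimes n}$ is triangular with nonzero diagonal entries, so these vectors are linearly independent. This forces $\lambda_{I,v}=0$ for all $v$, and since $I$ was arbitrary, $\mathcal{C}_n$ is linearly independent and therefore a basis. An alternative route, avoiding the representation, would be to prove directly via Lemmas~\ref{relaciones'} and \ref{relaciones} that each $EF_I v$ equals $EF_I T_{\gamma(v)}$ plus a combination of basis elements $EF_J T_{w'}$ with $\ell(w')<\ell(\gamma(v))$, thereby exhibiting a unitriangular change of basis from $\mathcal{C}_n$ to $\mathcal{B}_n$.
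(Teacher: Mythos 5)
Your architecture is sound and is exactly dual to the paper's. The paper disposes of this corollary in two lines: it proves that $\mathcal{C}_n$ \emph{spans} $\E$ by rerunning the inductive argument of Proposition~\ref{span} with the rewriting rules of Lemma~\ref{relaciones} (which govern the $B_k$ and $\mathbb{T}^{\pm}_{n,k}$) substituted for those of Lemma~\ref{relaciones'}, and then gets linear independence for free by cardinality, since $|\mathcal{C}_n| = b_{n+1}2^n n! = \dim\E$ by Theorem~\ref{basis}. You run the trick in the opposite direction: prove independence, deduce spanning from the count. Both directions are legitimate once Theorem~\ref{basis} is in hand, and your cardinality computation via the bijection $\gamma$ is correct. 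The practical difference is that the paper's direction reuses machinery already built (the spanning induction), whereas yours requires new analysis of how $\Phi$ acts on the $\mathsf{M}$-products, which is precisely where your write-up thins out.

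Concretely, your main route rests on an analogue of Proposition~\ref{aplicacion} for $\mathsf{M}$-products together with a triangularity claim, and you explicitly flag it as unproven (``Granting this\dots''); as submitted, that is a genuine gap, and it is an analysis the paper never needs to carry out. Fortunately, your closing alternative closes it and is the cleanest completion: expanding each $T_i^{-1}=T_i-(\U-\U^{-1})E_i$ inside $B_k$ and pushing the resulting idempotents leftward via Lemma~\ref{action} shows $EF_I\,\mathtt{m}_1\cdots\mathtt{m}_n = EF_I\,T_{\gamma(v)} + \sum \lambda_{J,w'}EF_J T_{w'}$ with $\ell(w')<\ell(\gamma(v))$, a unitriangular change of basis from $\mathcal{B}_n$ with respect to length, which yields independence and spanning simultaneously. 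Note that in this route the $(\V-\V^{-1})$ corrections you worry about never arise: in $B_k$ only the $T$-letters are inverted, never $B_1$, so every correction term carries only $E$-type idempotents. One further caution if you insist on the representation-theoretic route: the evaluation step ``$\Phi(EF_J)$ annihilates $v^I$ unless $J=I$'' is too strong as stated — every $J$ whose blocks sit inside blocks of $I$ (with the $0$-block condition respected) also survives — so the reduction to $\sum_v\lambda_{I,v}(v^I)\Phi(v)=0$ needs an induction along the refinement order; this subtlety is inherited from the argument of Theorem~\ref{basis} that you are mirroring.
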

\begin{proof}
  We can prove that $\mathcal{C}_n$ spans $\E$ analogously to Proposition~\ref{span}, but this time using the relations given in Lemma~\ref{relaciones}. The linear independence is guaranteed by cardinality.
\end{proof}

\section{Markov Trace in $\E$}
In this section we prove that $\E$ supports a Markov Trace. For that, we use the method of relative traces, cf.\cite{aijuMMJ,chpoIMRN}, which consists in construct a family of linear maps $\vartheta_n:\E\rightarrow \mathcal{E}_{n-1}^{\B}$, which gives step by step the desired Markov properties. Specifically, these properties are guaranteed by three key results, in our case these are Lemmas~\ref{lemmatraza1}, Lemma~\ref{commT} and (ii) Lemma~\ref{commE} (ii), which are essential to prove that the trace defined by $\mathtt{tr}_n=\vartheta_1\circ \dots \circ \vartheta_n$ is a Markov trace (Theorem~\ref{Markovtrace}). 

\subsection{}From now on, we fix the parameters $\x,\W, \mathsf{y},\mathsf{z}\in \mathbb{K}$ and we consider $w\in \mathcal{C}_n$ expressed on the form $w=\w EF_I$ (which is possible by Corollary~\ref{actionCn}). Let $\mathbb{L}:=\mathbb{K}(\x,\W,\y,\z)$, then, when it is needed, we will consider $\x,\W, \mathsf{y},\mathsf{z}$ as variables, and work with the algebra $\E\otimes\mathbb{L}$, which, for simplicity, will be denoted by $\E$ again.

\begin{definition}\label{tracedef}
We set $\vartheta_1(B_1)=\y$, $\vartheta_1(F_1)=\x$ and $\vartheta_1(B_1F_1)=\W$. For $n\geq 2$, we define the linear map from $\E$ to $\mathcal{E}_{n-1}$ on the basis $\mathcal{C}_n$ as follows:

\begin{equation}\label{trace}
 \vartheta_n({\mathtt m}_1\cdots {\mathtt m}_{n-1}{\mathtt m}_n EF_I)=\left\{\begin{array}{ll}
    {\mathtt m}_1\cdots {\mathtt m}_{n-1}EF_I  & \text{\rm for ${\mathtt m}_n=1$, $n\not\in {\rm Supp}(I)$,}  \\
  \x {\mathtt m}_1\cdots {\mathtt m}_{n-1}EF_{I\backslash n}   & \text{\rm for ${\mathtt m}_n=1$, $n\in {\rm Supp}(I)$,}  \\
    \mathsf{z} {\mathtt m}_1\cdots {\mathtt m}_{n-1}\mathbb{{T}}_{n-1,k}^{\pm}EF_{\tau_{n,k}(I)}  & \text{\rm for ${\mathtt m}_n=\mathbb{T}_{n,k}^{\pm}$; $k<n$,} \\
   \mathsf{y} {\mathtt m}_1\cdots {\mathtt m}_{n-1}EF_I  &  \text{\rm for ${\mathtt m}_n=B_n$, $n\not\in {\rm Supp}(I)$,} \\
  \W {\mathtt m}_1\cdots {\mathtt m}_{n-1}EF_{I\backslash n}    & \text{\rm for ${\mathtt m}_n=B_n$, $n\in {\rm Supp}(I)$.}
     \end{array}\right.
\end{equation}

\vspace{3mm}
where $\tau_{n,k}(I)$ denotes the partition $(I*\{n,k\})\backslash n$.
\end{definition}
We begin proving some partitions properties, which will use frequently in the sequel

\begin{lemma}\label{partitionsprop}Let $\sigma \in S_n$ and $I\in \mathcal{P}(\n_0)$, then we have that
\begin{itemize}
  \item[(i)] $\sigma(I\backslash \{k\})=\sigma(I)\backslash \{\sigma(k)\}$, for some $k\in {\rm Supp}(I)$.
  \item[(ii)] $\sigma(I*\{j,k\})=\sigma(I)*\{\sigma(j),\sigma(k)\}$, for some $k,j\in \n$.
  \item[(iii)] $\sigma(\tau_{n,k}(I))=\tau_{\sigma(n),\sigma(k)}(\sigma(I))$ for some $k < n$.
\end{itemize}
\end{lemma}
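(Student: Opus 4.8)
The plan is to verify each of the three identities directly from the definitions of the partition operations $\backslash\{k\}$, $*\{j,k\}$, and $\tau_{n,k}$, exploiting the fact that $\sigma \in S_n$ is a bijection on $\mathbf{n}_0$ fixing $0$ (since the natural action of $S_n$ on $\mathcal{P}(\mathbf{n}_0)$ permutes only the elements of $\mathbf{n}$, leaving $0$ in place), and that $\sigma$ therefore carries blocks to blocks of the same combinatorial type. Parts (i) and (ii) are the elementary building blocks, and part (iii) will follow by combining them, so I would prove them in that order.

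For part (i), I would argue that removing $k$ from $I$ and then applying $\sigma$ yields the same partition as applying $\sigma$ and then removing $\sigma(k)$, because $\sigma$ acts blockwise: if $I_r$ is the block containing $k$, then $\sigma(I)$ has block $\sigma(I_r)$ containing $\sigma(k)$, and deleting $k$ from $I_r$ corresponds under $\sigma$ exactly to deleting $\sigma(k)$ from $\sigma(I_r)$, while all other blocks are permuted unchanged. For part (ii), I would recall that $I*\{j,k\}$ either leaves $I$ unchanged (when $j,k$ already share a block) or merges the two blocks containing $j$ and $k$; since $\sigma$ is a bijection, $j$ and $k$ lie in a common block of $I$ if and only if $\sigma(j)$ and $\sigma(k)$ lie in a common block of $\sigma(I)$, and the merge of the two blocks commutes with applying $\sigma$ because $\sigma$ of a union of blocks is the union of their images. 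Thus both cases match $\sigma(I)*\{\sigma(j),\sigma(k)\}$.

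For part (iii), I would simply unfold the definition $\tau_{n,k}(I) = (I*\{n,k\})\backslash n$ and apply (ii) and then (i) in succession:
\begin{equation}
\sigma(\tau_{n,k}(I)) = \sigma\big((I*\{n,k\})\backslash n\big) = \sigma(I*\{n,k\})\backslash \sigma(n) = \big(\sigma(I)*\{\sigma(n),\sigma(k)\}\big)\backslash \sigma(n),
\end{equation}
where the middle equality uses (i) with the point $n \in {\rm Supp}(I*\{n,k\})$ and the last uses (ii). The right-hand side is by definition $\tau_{\sigma(n),\sigma(k)}(\sigma(I))$, which completes the proof.

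I do not expect a serious obstacle here, as all three claims are purely combinatorial bookkeeping about how the bijection $\sigma$ interacts with block operations. The only point requiring mild care is ensuring that $n \in {\rm Supp}(I*\{n,k\})$ so that the hypothesis of (i) is satisfied when removing $n$ in the derivation of (iii); this holds automatically whenever $k \neq n$, since then $n$ and $k$ are forced into a common non-singleton block by the $*$ operation. A secondary subtlety is the implicit convention that $\sigma$ fixes $0$, which guarantees that $0$ remains in whichever block it started in throughout the manipulations, so the computations above are entirely consistent with the definitions of $\tau_{n,k}$ and $\backslash n$ given in Definition~\ref{tracedef}.
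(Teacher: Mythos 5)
Your proposal is correct and takes essentially the same approach as the paper: parts (i) and (ii) via the blockwise action of $\sigma$ on the blocks of the partition (with the case analysis for $*\{j,k\}$), and part (iii) obtained by composing the two. If anything you are slightly more explicit than the paper, which merely asserts (iii) as "a consequence of (i) and (ii)," whereas you write out the chain $\sigma\big((I*\{n,k\})\backslash n\big)=\sigma(I*\{n,k\})\backslash \sigma(n)=\big(\sigma(I)*\{\sigma(n),\sigma(k)\}\big)\backslash \sigma(n)$ and verify the hypothesis $n\in {\rm Supp}(I*\{n,k\})$ required to invoke (i).
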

  \begin{proof}
Let $I=(I_1,\dots,I_m) \in \mathcal{P}(\n_0)$, and suppose without lose of generality that $I_1$ contains $k$. Then we have that
$I\backslash\{k\}=(I_1',I_2,\dots, I_m)$
where $I_1'=I_1\backslash\{k\}$. Therefore
\begin{eqnarray*}
  \sigma(I\backslash \{k\}) &=& (\sigma(I_1'),\sigma (I_2),\dots, \sigma(I_m)) \\
   &=& (\sigma(I_1)\backslash \{\sigma(k)\},\sigma (I_2),\dots, \sigma(I_m))=\sigma(I)\backslash \{\sigma(k)\}
\end{eqnarray*}
and we have (i). For (ii) we only prove the case when $j,k\in {\rm Supp}(I)$, and these are in different blocks. Let $I_1$ y $I_2$ the blocks of $I$ that contains $j$ and $k$ respectively. Then, we have
$$I*\{j,k\}=(I_1\cup I_2, I_3, \dots, I_m)$$
therefore
\begin{eqnarray*}
  \sigma(I*\{n,k\}) &=&  (\sigma(I_1)\cup \sigma(I_2), \sigma(I_3), \dots, \sigma(I_m))\\
   &=& (\sigma(I_1), \sigma(I_2), \sigma(I_3), \dots, \sigma(I_m))*\{\sigma(n),\sigma(k)\}=\sigma(I)*\{\sigma(j),\sigma(k)\}
\end{eqnarray*}
Finally, (iii) is a consequence of (i) and (ii).
  \end{proof}

We would like to prove that (\ref{trace}) holds by considering $v\in \mathcal{E}_{n-1}^{\mathtt B}$ instead ${\mathtt m}_1\cdots {\mathtt m}_{n-1}$ in the formula. Having this in mind, we introduce some notation and we prove a technical lemma.
\smallbreak

Let $j>k\in \n$ we define the element $\sigma_{j,k}\in S_n$ by
$$\sigma_{j,k}=s_{j-1}\cdots s_k$$
where the $s_i$ denote the transposition $(i,i+1)$. Note that
\begin{equation}\label{actionsigmas}
  \sigma_{j,k}(i)=\left\{\begin{array}{cc}
   j  & \text{if $i=k$} \\
   i-1  &  \text{if $k<i\leq j$}\\
    i &  \text{otherwise}
  \end{array}\right. \quad \text{and} \quad \sigma_{j,k}^{-1}(i)=\left\{\begin{array}{cc}
   k  & \text{if $i=j$} \\
   i+1  &  \text{if $k\leq i< j$}\\
    i &  \text{otherwise}
  \end{array}\right.
\end{equation}

\begin{lemma}\label{equivalence}
  For $J=(J_1,\dots,J_r)\in \mathcal{P}(\n_0 \backslash\{n\})$ and $I=(I_1,\dots, I_s)\in \pcero$ the following equality holds.\begin{equation}\label{particion}
                (\sigma_{n,k}^{-1}(J)*I)*\{n,k\})\backslash n = \sigma_{n-1,k}^{-1}(J)*((I*\{n,k\})\backslash n)
               \end{equation}
\end{lemma}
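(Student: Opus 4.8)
The plan is to recognise the merge-and-delete operation $\tau_{n,k}(\cdot)=(\cdot*\{n,k\})\backslash n$ as the pushforward of set partitions along a fixed surjection, and to exploit that such a pushforward is a homomorphism for the join $*$. Concretely, let $\pi_{n,k}\colon\n_0\to\n_0\backslash\{n\}$ be the map sending $n\mapsto k$ and fixing every other element, and for a partition $P$ write $\pi_{n,k}(P)$ for the partition of $\n_0\backslash\{n\}$ whose blocks are the images under $\pi_{n,k}$ of the blocks of $P$, two blocks with a common image being merged. Directly from the definitions of $*\{n,k\}$ and of $\backslash n$ one checks that $\tau_{n,k}(P)=\pi_{n,k}(P)$ for every $P$: merging the blocks of $n$ and $k$ and then erasing $n$ is exactly the effect of identifying $n$ with $k$.

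The key step is the distributivity
$$\tau_{n,k}(P*Q)=\tau_{n,k}(P)*\tau_{n,k}(Q).$$
I would prove this at the level of equivalence relations. Since $P*Q$ is the join, its associated equivalence is the transitive closure of the union of those of $P$ and $Q$; applying $\pi_{n,k}$ to a generating chain of relations for $P*Q$ produces a chain of relations for $\pi_{n,k}(P)*\pi_{n,k}(Q)$, while conversely the generators of $\pi_{n,k}(P)$ and of $\pi_{n,k}(Q)$ occur among the generators of $\pi_{n,k}(P*Q)$. Hence both sides define the same equivalence on $\n_0\backslash\{n\}$. (Equivalently, one first uses the associativity, commutativity and idempotence of $*$ to get $(P*Q)*\{n,k\}=(P*\{n,k\})*(Q*\{n,k\})$, and then checks that deleting $n$ respects the join once $n$ already lies in the block of $k$ in both factors.)

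Applying this with $P=\sigma_{n,k}^{-1}(J)$ and $Q=I$ reduces the lemma to the single relabelling identity
$$\tau_{n,k}(\sigma_{n,k}^{-1}(J))=\sigma_{n-1,k}^{-1}(J).$$
As the image of $\sigma_{n,k}^{-1}$ is $\n_0\backslash\{k\}$, the restriction of $\pi_{n,k}$ to this image is a bijection onto $\n_0\backslash\{n\}$, so $\tau_{n,k}(\sigma_{n,k}^{-1}(J))=(\pi_{n,k}\circ\sigma_{n,k}^{-1})(J)$. Using the explicit formulas in (\ref{actionsigmas}) I would verify $\pi_{n,k}\circ\sigma_{n,k}^{-1}=\sigma_{n-1,k}^{-1}$ on $\n_0\backslash\{n\}$ by the three cases $i<k$, $k\leq i\leq n-2$ and $i=n-1$, only the last invoking $\pi_{n,k}(n)=k$. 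Chaining the three displayed equalities then yields the lemma, in the spirit of the compatibility statements of Lemma~\ref{partitionsprop}.

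The main obstacle I anticipate is the distributivity step, and within it the ground-set bookkeeping: $\sigma_{n,k}^{-1}(J)$ is naturally a partition of $\n_0\backslash\{k\}$, so one must consistently regard $k$ as a singleton block before forming the join with $I$ and before applying $\tau_{n,k}$. The pushforward formulation is precisely what keeps this transparent, since a singleton simply gets absorbed into the block receiving it under $\pi_{n,k}$; everything else is routine once the partition operations are phrased through $\pi_{n,k}$.
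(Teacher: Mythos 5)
Your proposal is correct, but it takes a genuinely different route from the paper's. The paper proves Lemma~\ref{equivalence} by direct block-level computation: it fixes the representative case where $n$ and $k$ lie in distinct blocks of ${\rm Supp}(I)$, splits on whether $n-1\in{\rm Supp}(J)$ (noting that $\sigma_{n,k}^{-1}(J)=\sigma_{n-1,k}^{-1}(J)$ when $n-1\notin{\rm Supp}(J)$, and when $n-1\in{\rm Supp}(J)$ tracking the blocks $A_1\ni n$ of $\sigma_{n,k}^{-1}(J)$ and $B_1\ni k$ of $\sigma_{n-1,k}^{-1}(J)$ via $A_1\backslash\{n\}=B_1\backslash\{k\}$), then expands both sides of (\ref{particion}) explicitly and dismisses the remaining block configurations as analogous. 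You instead identify $\tau_{n,k}$ with the pushforward of partitions along the identification $\pi_{n,k}\colon n\mapsto k$, prove that this pushforward is a join-homomorphism, and reduce the lemma to the pointwise identity $\pi_{n,k}\circ\sigma_{n,k}^{-1}=\sigma_{n-1,k}^{-1}$ on $\n_0\backslash\{n\}$, checked in three one-line cases from (\ref{actionsigmas}). Both of your key steps are sound: the homomorphism claim follows either from your chain argument (the $\pi_{n,k}$-image of a witnessing chain for $P*Q$ witnesses $\pi_{n,k}(P)*\pi_{n,k}(Q)$, and monotonicity of pushforward gives the reverse inequality) or from the standard fact that pushforward is left adjoint to pullback on equivalence relations, hence preserves joins; and your bijectivity observation correctly disposes of the singleton $\{k\}$ in $\sigma_{n,k}^{-1}(J)$, a ground-set subtlety the paper handles only implicitly. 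What your approach buys is uniformity and completeness — it covers in one stroke all the cases the paper leaves to the reader, and it isolates the sole combinatorial content (the relabelling of $\sigma$'s) from general lattice facts, in a form that would also streamline the partition comparisons recurring in Lemmas~\ref{lemmatraza1}--\ref{commT}. What the paper's computation buys is that it stays entirely within the explicit block notation of Lemma~\ref{partitionsprop} and Section~5, requiring no extra formalism. The only point to make fully precise in a write-up is the definition of the pushforward partition (blocks with intersecting images merge, with transitive closure in general), though for $\pi_{n,k}$ at most two blocks ever merge, so your description suffices.
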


\begin{proof}
  We only prove the case when $n,k \in {\rm Supp}(I)$, and these are in different blocks of $I$, since the other cases can be verified analogously. Let $J=(J_1,\dots,J_r)\in \mathcal{P}(\n_0 \backslash\{n\})$ and  $I=(I_1,\dots, I_s)\in \pcero$, without lose generality, we can suppose that $I_1$ and $I_2$ are the blocks of $I$ that contain $k$ and $n$ respectively. We proceed, distinguish cases.\smallbreak
\noindent\textsc{Case:} $n-1\not\in Supp(J)$\smallbreak
First, note that in this case the partitions $\sigma_{n,k}^{-1}(J)$ and $\sigma_{n-1,k}^{-1}(J)$ are the same, and it will be denoted by $A=(A_1,\dots, A_r)$. Moreover, we have that $n,k\not \in {\rm Supp(A)}$ by (\ref{actionsigmas}) and the fact that $n-1\not\in {\rm Supp}(J) $. Then, in these case (\ref{particion}) holds directly, since the operations $*\{n,k\}$ and $\backslash n$ just have influence over $I$.\smallbreak
\noindent\textsc{Case:} $n-1\in Supp(J)$\smallbreak
This time $\sigma_{n,k}^{-1}(J)$ and $\sigma_{n-1,k}^{-1}(J)$ are different, we will denote these by $A=(A_1,\dots, A_r)$ and $B=(B_1,\dots, B_r)$ respectively. Note that, $n\in {\rm Supp}(A)$,  $k\in {\rm Supp} (B)$,  $k\not\in {\rm Supp}(A)$ and  $n\not\in {\rm Supp} (B)$. Moreover, if $A_1$ and $B_1$ are the blocks of $A$ and $B$ that contain $n$ and $k$ respectively, we have that $A_i=B_i$ for $2\leq i\leq r$, and $A_1\backslash \{n\}=B_1\backslash\{k\}$.\smallbreak
Let $I_3,\dots,I_{t}$, with $t\leq s$, the blocks of $I$ that have intersection with $A_1\backslash \{n\}=B_1\backslash\{k\}$, and $I_{t+1},\dots,I_s$ those that don't have. We define the partitions
$$A'=(A_2,\dots,A_r),\quad B'=(B_2,\dots, B_r),\quad \text{and}\quad I'=(I_{t+1},\dots,I_s)$$
Then, for one side we have
\begin{eqnarray*}
  B*((I*\{n,k\})\backslash \{n\}) &=& B*((I_1\cup I_2)\backslash n, I_3,\dots ,I_t,I' ) \\
   &=& B*(I_1\cup (I_2\backslash \{n\}), I_3,\dots ,I_t,I' ) \\
   &=& (B_1\cup I_1\cup (I_2\backslash \{n\})\cup I_3\cup \dots \cup I_s, B'*I')\\
 &=&((B_1\backslash \{k\})\cup I_1\cup (I_2\backslash \{n\})\cup I_3\cup \dots \cup I_s, B'*I')
\end{eqnarray*}
On the other hand we have
\begin{eqnarray*}
  ((A*I)*\{n,k\}) &=& ((A_1\cup I_2\cup I_3\cup \dots \cup I_t,I_1,A'*I')*\{n,k\}) \backslash n\\
   &=& ((A_1\cup I_1 \cup I_2\cup I_3\cup \dots \cup I_t)\backslash n,A'*I') \\
   &=& ((A_1\backslash\{ n\})\cup I_1 \cup (I_2\backslash \{n\})\cup I_3\cup \dots \cup I_t,A'*I')
\end{eqnarray*}
since $A'=B'$  and $A_1\backslash \{n\}=B_1\backslash\{k\}$ the result follows.
\end{proof}

\begin{lemma}\label{multizq} For every $v\in \mathcal{E}_{n-1}^{\mathtt B}$ we have

  $$\vartheta_n(v{\mathtt m}_n EF_I)=\left\{\begin{array}{ll}
    vEF_I  & \text{\rm for ${\mathtt m}_n=1$, $n\not\in {\rm Supp}(I)$,}  \\
  \x vEF_{I\backslash n}   & \text{\rm for ${\mathtt m}_n=1$, $n\in {\rm Supp}(I)$,} \\
    \mathsf{z} v\mathbb{{T}}_{n-1,k}^{\pm}EF_{\tau_{n,k}(I)}  & \text{\rm for ${\mathtt m}_n=\mathbb{T}_{n,k}^{\pm}$; $k<n$,} \\
   \mathsf{y} vEF_I  &  \text{\rm for ${\mathtt m}_n=B_n$, $n\not\in {\rm Supp}(I)$,} \\
  \W vEF_{I\backslash n}    &  \text{\rm for ${\mathtt m}_n=B_n$, $n\in {\rm Supp}(I) $.}
     \end{array}\right.
\vspace{3mm}$$
\end{lemma}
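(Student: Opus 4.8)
The plan is to reduce the general statement, where $v$ is an arbitrary element of $\mathcal{E}_{n-1}^{\mathtt B}$, back to the defining formula (\ref{trace}), which is only known to hold when $v$ is a product $\m_1\cdots\m_{n-1}$ with $\m_i\in\mathsf{M}_i$. Since $\vartheta_n$ is linear and the set $\mathcal{C}_{n-1}=\{EF_J(\m_1\cdots\m_{n-1})\ |\ J\in\mathcal{P}(\mathbf{(n-1)}_0),\ \m_i\in\mathsf{M}_i\}$ is a basis of $\mathcal{E}_{n-1}^{\mathtt B}$ (the Corollary after Theorem~\ref{basis}), it suffices to verify the five cases when $v=EF_J\,\m_1\cdots\m_{n-1}$ is a single basis element. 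So first I would write $v=EF_J\,u$, where $u=\m_1\cdots\m_{n-1}$ lies in the subalgebra generated by $B_1,T_1,\dots,T_{n-2}$, and $EF_J$ commutes past everything in $\m_n EF_I$ that it needs to.

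The key technical point is to push $EF_J$ to the right so that the expression $v\,\m_n EF_I = EF_J\,u\,\m_n EF_I$ is rewritten in the standard basis form $u\,\m_n\,(EF_{J'}EF_I)$ on which (\ref{trace}) can be applied, and then to track what happens to $EF_J$ under the trace. Because each $\m_n\in\{1,B_n,\mathbb{T}_{n,k}^{\pm}\}$ involves only generators indexed by $k,\dots,n-1$ together with $B$'s, and because $EF_J$ with $J\in\mathcal{P}(\mathbf{(n-1)}_0)$ involves no index $n$, the factor $EF_J$ commutes with $\m_n$ up to the index shifts governed by Corollary~\ref{actionCn} and Lemma~\ref{action}. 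Concretely, I expect to use that conjugation by $\mathbb{T}_{n,k}^{\pm}$ acts on a partition by the permutation $\sigma_{n,k}$ (as encoded in (\ref{actionsigmas})), so that moving $EF_J$ through $\mathbb{T}_{n,k}^{\pm}$ replaces $J$ by $\sigma_{n,k}^{-1}(J)$. This is exactly the situation that Lemma~\ref{equivalence} is designed for: after applying $\vartheta_n$ via (\ref{trace}) one obtains $\mathbb{T}_{n-1,k}^{\pm}$ and the partition $(\sigma_{n,k}^{-1}(J)*I)*\{n,k\})\backslash n$, whereas the desired right-hand side produces $\mathbb{T}_{n-1,k}^{\pm}$ and $\sigma_{n-1,k}^{-1}(J)*\tau_{n,k}(I)$, and the identity (\ref{particion}) shows these two partitions agree.

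For the remaining cases I would proceed analogously but more simply. When $\m_n=1$ and $\m_n=B_n$, the factor $B_n$ commutes with $EF_J$ (as $J$ avoids index $n$) and the trace merely reads off the coefficient $1,\x,\y,$ or $\W$ according to whether $n\in{\rm Supp}(I)$ and whether $B_n$ is present; here the combinatorics reduce to the elementary identity $EF_J\cdot EF_I=EF_{J*I}$ together with part (i) of Lemma~\ref{partitionsprop} to handle the removal of $n$. The genuine obstacle is the $\m_n=\mathbb{T}_{n,k}^{\pm}$ case, and specifically making the bookkeeping of the $E$- and $F$-indices precise as $EF_J$ is transported through $\mathbb{T}_{n,k}^{\pm}$ while the braiding relations of Lemma~\ref{relaciones'} simultaneously rewrite $u\,\mathbb{T}_{n,k}^{\pm}$ into standard form; once one trusts that $\vartheta_n$ is well-defined on the basis and that Corollary~\ref{actionCn} controls the conjugation action on partitions, Lemma~\ref{equivalence} does the decisive combinatorial work and the equality follows. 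I would therefore state explicitly that the proof is by linearity plus a case check on basis elements, cite Corollary~\ref{actionCn} for the conjugation, and invoke Lemma~\ref{equivalence} as the single nontrivial step.
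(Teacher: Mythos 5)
Your proposal follows the paper's proof essentially step for step: reduce by linearity to basis elements of $\mathcal{C}_{n-1}$, transport $EF_J$ through ${\mathtt m}_n$ via the conjugation action of Corollary~\ref{actionCn} (so that $J$ becomes $\sigma_{n,k}^{-1}(J)$ on one side and $\sigma_{n-1,k}^{-1}(J)$ on the other), apply the defining formula (\ref{trace}), and settle the decisive case ${\mathtt m}_n=\mathbb{T}_{n,k}^{\pm}$ with Lemma~\ref{equivalence}, the cases ${\mathtt m}_n=1, B_n$ following since $B_n$ commutes with $EF_J$ and $EF_J\,EF_I=EF_{J*I}$. The one cosmetic slip is that the partition identity needed in the $B_n$ case is $(I*J)\backslash n=(I\backslash n)*J$ (valid because $n\notin{\rm Supp}(J)$), which the paper uses directly rather than Lemma~\ref{partitionsprop}(i), but this is immediate and does not affect the argument.
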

\begin{proof}
  By the linearity of the trace is enough prove the statement for $v\in \mathcal{C}_{n-1}$. The cases when ${\mathtt m}_n=1$ can be proven easily. \smallbreak

 For case ${\mathtt m}_n=\mathbb{T}_{n,k}^{\pm}$ with $k<n$, we have

\begin{eqnarray*}
  \vartheta_n(v\mathbb{T}_{n,k}^{\pm}EF_I) &=& \vartheta_n({\mathtt m}_1\cdots {\mathtt m}_{n-1}EF_J\mathbb{T}_{n,k}^{\pm}EF_I) \\
   &=&\vartheta_n({\mathtt m}_1\cdots {\mathtt m}_{n-1}\mathbb{T}_{n,k}^{\pm}EF_{\sigma_{n,k}^{-1}(J)} EF_I)   \\
   &=& \vartheta_n({\mathtt m}_1\cdots {\mathtt m}_{n-1}\mathbb{T}_{n,k}^{\pm}EF_{\sigma_{n,k}^{-1}(J)*I})\\
&=&\z\ {\mathtt m}_1\cdots {\mathtt m}_{n-1}\mathbb{T}_{n-1,k}^{\pm}EF_{\tau_{n,k}(\sigma_{n,k}^{-1}(J)*I)}
\end{eqnarray*}
On the other hand, we have
\begin{eqnarray*}
 \z\ v\mathbb{T}_{n-1,k}^{\pm}EF_{\tau_{n,k}(I)}  &=&  \\
   &=& \z\ {\mathtt m}_1\cdots {\mathtt m}_{n-1}EF_J\mathbb{T}_{n-1,k}^{\pm} EF_{\tau_{n,k}(I)} \\
   &=& \z\ {\mathtt m}_1\cdots {\mathtt m}_{n-1}\mathbb{T}_{n-1,k}^{\pm}EF_{\sigma_{n-1,k}^{-1}(J)} EF_{\tau_{n,k}(I)}\\
&=& \z\ {\mathtt m}_1\cdots {\mathtt m}_{n-1}\mathbb{T}_{n-1,k}^{\pm}EF_{\sigma_{n-1,k}^{-1}(J)*\tau_{n,k}(I)}
\end{eqnarray*}

Then the result follows by Lemma~\ref{equivalence}.\\

Finally, we suppose that  ${\mathtt m}_n=B_n$. We only prove the case when $n\in {\rm Supp}(I) $,  since the opposite case can be verified by an analogous way. Then, we have
\begin{eqnarray*}
  \vartheta_n(vB_nEF_I) &=& \vartheta_n(\w EF_J B_nEF_I)\\
   &=&\vartheta_n(\w B_n EF_J EF_I ) \\
   &=&\vartheta_n(\w B_n EF_{I*J})   \\
   &=& \W \w EF_{(I*J)\backslash n} \\
&=& \W \w EF_{(I\backslash n)*J}\\
&=&  \W \w EF_J EF_{I\backslash n}=\ \W v EF_{I\backslash n}
\end{eqnarray*}

\end{proof}

The following lemmas contain several computations analogous to the proved in \cite[Section 5]{aijuMMJ} for the bt--algebra. Therefore, although we work with a different quadratic relation, we will omit some computations in the following proofs, since these can be obtained by passing through the automorphism induced by the change of generators given in Remark~\ref{changvar}.

\begin{lemma}\label{lemmatraza1}
  For all $X,Z\in \mathcal{E}_{n-1}^{\mathtt B}$ and $Y\in \E$, we have:
\begin{itemize}
\item[(i)] $\vartheta_n(YZ)=\vartheta_n(Y)Z$
\item[(ii)] $\vartheta_n(XY)=X\vartheta_n(Y)$
\end{itemize}
\end{lemma}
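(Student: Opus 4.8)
The plan is to prove both statements by reducing to the defining formula for $\vartheta_n$ on the basis $\mathcal{C}_n$, and then exploiting linearity together with the already-established Lemma~\ref{multizq}. For part (i), $\vartheta_n(YZ)=\vartheta_n(Y)Z$, I would first observe that by linearity it suffices to check the identity when $Y$ is a single basis element of $\mathcal{C}_n$, that is, $Y=\w\, {\mathtt m}_n EF_I$ with ${\mathtt m}_i\in \mathsf{M}_i$ and $I\in \pcero$. Since $Z\in \mathcal{E}_{n-1}^{\mathtt B}$ lives in the subalgebra generated by $B_1,T_1,\dots,T_{n-2}$ (and the $E$'s, $F$'s on indices below $n$), the crucial point is that $Z$ commutes past the ``top layer'' carrying the index $n$ without disturbing it: the elements ${\mathtt m}_n\in\{1,B_n,\mathbb{T}_{n,k}^{\pm}\}$ interact with $Z$ only through the relations in Lemma~\ref{relaciones}, and the factor $EF_I$ may be rewritten using Corollary~\ref{actionCn}.

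The cleanest route for (i) is to write $Y=v\,{\mathtt m}_n EF_I$ with $v=\w\in\mathcal{E}_{n-1}^{\mathtt B}$ and then apply $\vartheta_n$ to $Y Z$ directly. Here the key structural fact is that $EF_I$ and ${\mathtt m}_n$ can be moved so that $Z$ ends up sitting inside the $\mathcal{E}_{n-1}^{\mathtt B}$-part. Concretely, I would push $Z$ leftward past $EF_I$ and past ${\mathtt m}_n$; whenever ${\mathtt m}_n=\mathbb{T}_{n,k}^{\pm}$ or $B_n$, the relations of Lemma~\ref{relaciones} show that the result is again a linear combination of elements of the form $v'\,{\mathtt m}_n' EF_{I'}$ where the top index-$n$ structure is preserved and $v'\in\mathcal{E}_{n-1}^{\mathtt B}$. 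Then Lemma~\ref{multizq} applies termwise, and comparing with $\vartheta_n(Y)Z=\big(\text{formula}\big)Z$ gives the equality, since the scalar ($\x,\y,\z,\W$ or $1$) produced by $\vartheta_n$ depends only on ${\mathtt m}_n$ and on whether $n\in{\rm Supp}(I)$, both of which are unchanged by right-multiplication by $Z$.

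For part (ii), $\vartheta_n(XY)=X\vartheta_n(Y)$ with $X\in\mathcal{E}_{n-1}^{\mathtt B}$, this is essentially the content of Lemma~\ref{multizq} read in the correct direction: writing $Y$ in the basis as $v\,{\mathtt m}_n EF_I$ and absorbing $X$ into the left $\mathcal{E}_{n-1}^{\mathtt B}$-factor gives $XY=(Xv)\,{\mathtt m}_n EF_I$ with $Xv\in\mathcal{E}_{n-1}^{\mathtt B}$, so Lemma~\ref{multizq} yields $\vartheta_n(XY)$ directly with $Xv$ in front, which is exactly $X\vartheta_n(Y)$. The only subtlety is that $Y$ is a general element of $\E$, not of $\mathcal{C}_n$; but by linearity I reduce to $Y\in\mathcal{C}_n$, and then $Y=v\,{\mathtt m}_nEF_I$ after rewriting via Corollary~\ref{actionCn}.

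I expect the main obstacle to lie in part (i): unlike (ii), left-multiplication by $Z$ is easy but \emph{right}-multiplication requires commuting $Z$ through the top layer, and the relations in Lemma~\ref{relaciones}(ii) for $\mathbb{T}^{-}_{n,k}B_1$ produce an extra error term $\alpha_{n,k}$ when $k\neq 1$. The careful bookkeeping here is ensuring that all such correction terms again factor through $\mathcal{E}_{n-1}^{\mathtt B}$ and carry the same $\vartheta_n$-scalar, so that after summing they reassemble into $\vartheta_n(Y)Z$; the partition identities in Lemma~\ref{partitionsprop} and Lemma~\ref{equivalence} are what guarantee the $F$-index partitions transform consistently. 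Once this termwise matching is verified, both identities follow.
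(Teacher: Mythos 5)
Your overall architecture is the paper's: reduce by linearity to $Y\in\mathcal{C}_n$ and $Z$ a defining generator, prove (ii) in one line by absorbing $X$ into the left factor and invoking Lemma~\ref{multizq} (this is verbatim the paper's proof of (ii), since $X\w\in\mathcal{E}_{n-1}^{\mathtt B}$), and prove (i) by pushing $Z$ rightward-to-leftward through $EF_I$ and the top layer $\m_n$ via Lemma~\ref{relaciones}, then comparing outputs of $\vartheta_n$ case by case. However, there is a genuine gap in the mechanism you propose for (i). You assert that the scalar produced by $\vartheta_n$ ``depends only on $\m_n$ and on whether $n\in{\rm Supp}(I)$, both of which are unchanged by right-multiplication by $Z$'', and later that the correction terms ``carry the same $\vartheta_n$-scalar, so that after summing they reassemble into $\vartheta_n(Y)Z$''. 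Neither claim survives the hardest case, $\m_n=B_n$ with $Z=B_1$: by Lemma~\ref{relaciones}(ii), commuting gives $B_nB_1=B_1B_n+(\U-\U^{-1})\lambda_n$, where the correction $\lambda_n$ has top layer of type $\mathbb{T}^{\pm}_{n,1}$, not $B_n$ --- so right-multiplication by $Z$ \emph{does} change the index-$n$ structure, and $\vartheta_n$ assigns the correction terms the scalar $\z$, not the $\W$ (or $\y$) carried by the main term. These corrections cannot reassemble into $\vartheta_n(Y)Z$; instead the paper proves that $\mathtt{A}:=\vartheta_n(\w\lambda_n EF_I)$ vanishes outright: applying the trace formula yields three terms, $\z\,\w B_1^2EF_{\tau_{n,1}(I*\{1,k\})}-\z\,\w EF_{\tau_{n,1}(I*\{1,k\})}-\z(\V-\V^{-1})\w B_1EF_{\tau_{n,1}(I*\{0,1,k\})}$, and expanding $B_1^2=1+(\V-\V^{-1})F_1B_1$ together with the identity $\tau_{n,1}(I*\{1,k\})*\{0,1\}=\tau_{n,1}(I*\{0,1,k\})$ gives exact cancellation. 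Without discovering this vanishing, your plan stalls precisely where you predicted the difficulty would be, but with the wrong resolution in hand.

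Your ``reassembly'' picture is accurate in the other delicate case, $\m_n=\mathbb{T}^{-}_{n,k}$ with $Z=B_1$: there correction terms of the same shape arise on both sides of the identity and match termwise through the partition identities $\tau_{n,k}(I)*\{1,k\}=\tau_{n,1}(I*\{1,k\})$ and $\tau_{n,1}(I*\{0,1,k\})=\tau_{n,k}(I)*\{0,1,k\}$. A minor bookkeeping point: the identities doing the work in this lemma are these ad hoc ones together with Lemma~\ref{partitionsprop}; Lemma~\ref{equivalence} is the engine behind the proof of Lemma~\ref{multizq} itself rather than of the present statement. Part (ii) as you state it is correct and coincides with the paper's argument.
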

\begin{proof}
  For proving claim (i) notice that, due to the linearity  of $\vartheta_n$, we can suppose that $ Z$ is a  defining generator of $\mathcal{E}_{n-1}^{\mathtt{B}}$ and $Y={\mathtt m}_1{\mathtt m}_2\dots {\mathtt m}_n EF_I$, with ${\mathtt m}_i\in \mathsf{M}_{i}$ and $I\in \pcero$. To prove the claim  we shall distinguish the $Y$'s according to the possibilities of ${\mathtt m}_n$, and if $n$ belongs to ${\rm Supp}(I)$ or not.\\

First, note that for $Z=E_i, F_j$ with $1\leq i\leq n-2$ and $1\leq j\leq n-1$, claim (i) holds easily for any choice of $\m_n$. Also, when ${\mathtt m}_n=1$ and $n\not\in {\rm Supp}(I)$, since $\vartheta_n$ acts like the identity. Now, we proceed to study the remaining cases.\\

\noindent \textsc{Case: }${\mathtt m}_n=1$, $n\in{\rm Supp}(I)$\smallbreak
If we consider $Z=T_j, E_j$ the result follows by \cite[Lemma~2]{aijuMMJ}. And for $Z=B_1, F_1$ the results follows by Lemma~\ref{multizq} and the fact that $B_1$ commutes with $EF_I$. \\

\noindent \textsc{Case:} ${\mathtt m}_n=B_n$, $n\in{\rm Supp}(I)$\smallbreak
 First suppose that $Z=T_j$ for $j\in \{1,\dots,n-2\}$
\begin{eqnarray*}
  \vartheta_n({\mathtt m}_1\cdots {\mathtt m}_{n-1}B_nEF_{I}T_j) &=& \vartheta_n({\mathtt m}_1\cdots {\mathtt m}_{n-1} B_n T_j EF_{s_j(I)})  \\
   &=& \vartheta_n({\mathtt m}_1\cdots {\mathtt m}_{n-1} T_j B_n EF_{s_j(I)}) \\
   &=& \W {\mathtt m}_1\cdots {\mathtt m}_{n-1} T_j EF_{s_j(I)\backslash n)} \\
\end{eqnarray*}
On the other hand, $\vartheta_n({\mathtt m}_1\cdots {\mathtt m}_{n-1}B_nEF_{I})T_j=\W {\mathtt m}_1\cdots {\mathtt m}_{n-1} T_j EF_{s_j(I\backslash n)}$. Thus, since $s_j$ doesn't act over $n$, we have that $s_j(I)\backslash n=s_j(I\backslash n)$, and the result follows. \smallbreak

 For $Z=B_1$, we have
\begin{eqnarray*}
  \vartheta_n({\mathtt m}_1\cdots {\mathtt m}_{n-1}B_nEF_{I}B_1) &=&\vartheta_n({\mathtt m}_1\cdots {\mathtt m}_{n-1}B_n B_1 EF_{I})  \\
   &=&\vartheta_n({\mathtt m}_1\cdots {\mathtt m}_{n-1}B_1B_nEF_{I}+ (\U-\U^{-1}){\mathtt m}_1\cdots {\mathtt m}_{n-1}\lambda_n  EF_{I})\\
 &=& \W {\mathtt m}_1\cdots {\mathtt m}_{n-1}B_1EF_{I\backslash n}+ \vartheta_n( (\U-\U^{-1}){\mathtt m}_1\cdots {\mathtt m}_{n-1}\lambda_n EF_{I}) \\
&=& \W {\mathtt m}_1\cdots {\mathtt m}_{n-1}EF_{I\backslash n}B_1+ (\U-\U^{-1})\vartheta_n({\mathtt m}_1\cdots {\mathtt m}_{n-1}\lambda_n EF_{I})\\
\end{eqnarray*}
by using ii) Lemma~\ref{relaciones}, where $\lambda_n=[B_1T_{1}^{-1}\dots T_{n-2}^{-1} T_{n-1}\dots T_1B_1E_{1,k}-T_{1}^{-1}\dots T_{n-2}^{-1}T_{n-1}\dots T_1B_1^2E_{1,k}]$.
Then, it is enough to prove that $\mathtt{A}=\vartheta_n( {\mathtt m}_1\cdots {\mathtt m}_{n-1}\lambda_n EF_{I})=0$. In fact, we have
\begin{eqnarray*}
 \mathtt{A}  &=&  \vartheta_n( {\mathtt m}_1\cdots{\mathtt m}_{n-1}[B_1T_{1}^{-1}\dots T_{n-2}^{-1} \mathbb{T}_{n,1}^{-}E_{1,k}-T_{1}^{-1}\dots T_{n-2}^{-1}\mathbb{T}_{n,1}^{+}B_1^2E_{1,k}]EF_{I})\\
   &=& \vartheta_n( {\mathtt m}_1\cdots {\mathtt m}_{n-1}B_1T_{1}^{-1}\dots T_{n-2}^{-1} \mathbb{T}_{n,1}^{-}E_{1,k}EF_{I})-\vartheta_n({\mathtt m}_1\cdots {\mathtt m}_{n-1}T_{1}^{-1}\dots T_{n-2}^{-1}\mathbb{T}_{n,1}^{+}B_1^2E_{1,k}EF_{I}) \\
   &=& \vartheta_n( {\mathtt m}_1\cdots {\mathtt m}_{n-1}B_1T_{1}^{-1}\dots T_{n-2}^{-1} \mathbb{T}_{n,1}^{-}EF_{I*\{1,k\}})-\vartheta_n({\mathtt m}_1\cdots {\mathtt m}_{n-1}T_{1}^{-1}\dots T_{n-2}^{-1}\mathbb{T}_{n,1}^{+}EF_{I*\{1,k\}})-\\
&&(\V-\V^{-1}) \vartheta_n({\mathtt m}_1\cdots {\mathtt m}_{n-1}T_{1}^{-1}\dots T_{n-2}^{-1}\mathbb{T}_{n,1}^{-}EF_{I*\{0,1,k\}})\\
&=& \z {\mathtt m}_1\cdots {\mathtt m}_{n-1}B_1T_{1}^{-1}\dots T_{n-2}^{-1} \mathbb{T}_{n-1,1}^{-}EF_{\tau_{n,1}(I*\{1,k\})}- \z {\mathtt m}_1\cdots {\mathtt m}_{n-1}T_{1}^{-1}\dots T_{n-2}^{-1}\mathbb{T}_{n-1,1}^{+}EF_{\tau_{n,1}(I*\{1,k\})}-\\
&&\z (\V-\V^{-1}) {\mathtt m}_1\cdots {\mathtt m}_{n-1}T_{1}^{-1}\dots T_{n-2}^{-1}\mathbb{T}_{n-1,1}^{-}EF_{\tau_{n,1}(I*\{0,1,k\})}\\
&=& \z {\mathtt m}_1\cdots {\mathtt m}_{n-1}B_1^2EF_{\tau_{n,1}(I*\{1,k\})}- \z {\mathtt m}_1\cdots {\mathtt m}_{n-1}EF_{\tau_{n,1}(I*\{1,k\})}-\\
&&\z (\V-\V^{-1}) {\mathtt m}_1\cdots {\mathtt m}_{n-1}B_1EF_{\tau_{n,1}(I*\{0,1,k\})}
\end{eqnarray*}
Finally, expanding $B_1^2$, we obtain that $\vartheta_n({\mathtt m}_1\cdots {\mathtt m}_{n-1}\lambda_n EF_{I})=0$, since $\tau_{n,1}(I*\{1,k\})*\{0,1\}=\tau_{n,1}(I*\{0,1,k\})$. For the case  ${\mathtt m}_n=B_n$, $n\not\in{\rm Supp}(I)$, we can proceed analogously (we only have to put $\y$ instead $\W$, and omit the operation $\backslash n$ in the partition ).\\

\noindent\textsc{Case:} ${\mathtt m}_n=\mathbb{T}_{n,k}^{+}$, \smallbreak
For $Z=T_j$ with $j\in \{1,\dots,n-2\}$ the result follows analogously as in \cite[Lemma 1]{aijuMMJ} using the relations of Lemma~\ref{relaciones}. Suppose now, that $Z=B_1$, if $k>1$, then $B_1$ commutes with $\mathbb{T}_{n,k}^{+}$, therefore the result follows easily. If $k=1$, we have
\begin{eqnarray*}
  \vartheta_n(YZ)=\vartheta_n({\mathtt m}_1\cdots {\mathtt m}_{n-1}{T}_{n,k}^{+}EF_IB_1) &=&\vartheta_n({\mathtt m}_1\cdots {\mathtt m}_{n-1}{T}_{n,1}^{+}B_1 EF_I)   \\
   &=&\vartheta_n({\mathtt m}_1\cdots {\mathtt m}_{n-1}{T}_{n,1}^{-}EF_I)   \\
   &=& \z\ {\mathtt m}_1\cdots {\mathtt m}_{n-1}{T}_{n-1,1}^{-}EF_{\tau_{n,1}(I)} \\
 &=&\z\ {\mathtt m}_1\cdots {\mathtt m}_{n-1}{T}_{n-1,1}^{+}EF_{\tau_{n,1}(I)}B_1=\vartheta_n(Y)Z
\end{eqnarray*}

\noindent \textsc{Case:} ${\mathtt m}_n=\mathbb{T}_{n,k}^{-}$.\smallbreak
 For $Z=T_j$ with $j\in \{1,\dots, n-2\}$, the proof follows analogously as in \cite[Lemma~1]{aijuMMJ}, since the formula i) of Lemma~\ref{relaciones} coincides 
with (22) of \cite{aijuMMJ}.\\

Finally, for $Z=B_1$ we have
\begin{eqnarray*}
  \vartheta_n(YZ) &=& \vartheta_n(\w\tmenosn EF_I B_1)  \\
   &=&\vartheta_n(\w\tmenosn B_1 EF_I)  \\
   &=& \vartheta_n(\w B_1\mathbb{T}^{-}_{n,k} EF_I ) + (\U-\U^{-1})[\vartheta_n (\w B_1T_{1}^{-1}\dots T_{k-2}^{-1} \mathbb{T}_{n,1}^- E_{1,k}EF_I) -\\
&& \vartheta_n( \w T_{1}^{-1}\dots T_{k-2}^{-1}\mathbb{T}_{n,1}^- B_1 E_{1,k}EF_{I}]\\
   &=&  \vartheta_n(\w B_1\mathbb{T}^{-}_{n,k} EF_I ) + (\U-\U^{-1})[\vartheta_n (\w B_1T_{1}^{-1}\dots T_{k-2}^{-1} \mathbb{T}_{n,1}^- EF_{I*\{1,k\}}) -\\
&& \vartheta_n( \w T_{1}^{-1}\dots T_{k-2}^{-1}\mathbb{T}_{n,1}^+ EF_{I*\{1,k\}}))-(\V-\V^{-1})\vartheta_n( \w T_{1}^{-1}\dots T_{k-2}^{-1}\mathbb{T}_{n,1}^- EF_{I*\{0,1,k\}})]\\
   &=&\z\ \w B_1\mathbb{T}^{-}_{n-1,k} EF_{\tau_{n,k}(I)}  + \z (\U-\U^{-1})[\w B_1T_{1}^{-1}\dots T_{k-2}^{-1} \mathbb{T}_{n-1,1}^- EF_{\tau_{n,1}(I*\{1,k\})} -\\
&&  \w T_{1}^{-1}\dots T_{k-2}^{-1}\mathbb{T}_{n-1,1}^+ EF_{\tau_{n,1}(I*\{1,k\})}-(\V-\V^{-1}) \w T_{1}^{-1}\dots T_{k-2}^{-1}\mathbb{T}_{n-1,1}^- EF_{\tau_{n,1}(I*\{0,1,k\})}]
\end{eqnarray*}

On the other hand,
\begin{eqnarray*}
  \vartheta_n(Y)Z &=& \z\ \w \tmenos EF_{\tau_{n,k}(I)} B_1  \\
   &=& \z\ \w \tmenos B_1 EF_{\tau_{n,k}(I)} \\
   &=& \z\ \w \tmenos B_1 EF_{\tau_{n,k}(I)} \\
   &=& \z\ \w B_1\mathbb{T}^{-}_{n-1,k} EF_{\tau_{n,k}(I)}  + \z (\U-\U^{-1})[\w B_1T_{1}^{-1}\dots T_{k-2}^{-1} \mathbb{T}_{n-1,1}^- EF_{\tau_{n,k}(I)*\{1,k\}} -\\
&&  \w T_{1}^{-1}\dots T_{k-2}^{-1}\mathbb{T}_{n-1,1}^+ EF_{\tau_{n,k}(I)*\{1,k\}}-(\V-\V^{-1}) \w T_{1}^{-1}\dots T_{k-2}^{-1}\mathbb{T}_{n-1,1}^- EF_{\tau_{n,k}(I)*\{0,1,k\}}] \\
   \end{eqnarray*}
clearly we have that $\tau_{n,k}(I)*\{1,k\}=\tau_{n,1}(I*\{1,k\})$ and $\tau_{n,1}(I*\{0,1,k\})=\tau_{n,k}(I)*\{0,1,k\}$, then, the result follows.

Finally (ii) is a direct consequence of Lemma~\ref{multizq}, since $X\w\in \mathcal{E}_{n-1}^{\mathtt B}$.
\end{proof}
\begin{corollary}   For all $X,Z\in \mathcal{E}_{n-1}^{\mathtt B}$ and $Y\in \E$, we have:
  $$\vartheta_n(XYZ)=X\vartheta_n(Y)Z.$$
\end{corollary}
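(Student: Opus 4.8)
The plan is to obtain this statement as an immediate consequence of the two parts of Lemma~\ref{lemmatraza1}, so that essentially no new computation is required; the only points needing any care are the associativity of the triple product and the verification that the intermediate element lies in the correct algebra so that the relevant part of the lemma applies.

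First I would record the inclusion $\mathcal{E}_{n-1}^{\mathtt B}\subseteq \E$ coming from the tower of algebras in Remark~\ref{tower}. This is exactly what is needed so that, for $X\in\mathcal{E}_{n-1}^{\mathtt B}$ and $Y\in\E$, the product $XY$ again lies in $\E$. That membership is the hypothesis required to apply part (i) of Lemma~\ref{lemmatraza1}, whose first argument must be an element of $\E$.

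The key step is then the following chain of equalities:
$$\vartheta_n(XYZ)=\vartheta_n\big((XY)Z\big)=\vartheta_n(XY)\,Z=\big(X\vartheta_n(Y)\big)Z=X\vartheta_n(Y)Z,$$
where the second equality is part (i) of Lemma~\ref{lemmatraza1} applied with $XY\in\E$ and $Z\in\mathcal{E}_{n-1}^{\mathtt B}$, the third equality is part (ii) applied with $X\in\mathcal{E}_{n-1}^{\mathtt B}$ and $Y\in\E$, and the first and last equalities are merely the associativity of multiplication in $\E$.

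There is no reduction to generators or to the basis $\mathcal{C}_n$ needed here, since the two parts of the lemma already hold for arbitrary $Y\in\E$ and arbitrary $X,Z\in\mathcal{E}_{n-1}^{\mathtt B}$; all the substantive work was carried out in the proof of the lemma. Consequently I expect the only conceivable obstacle to be a failure of the membership $XY\in\E$, and this is precisely what Remark~\ref{tower} rules out. Thus the corollary reduces to the one-line deduction above.
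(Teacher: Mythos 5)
Your proof is correct and matches the paper's approach: the paper simply says the corollary is ``straightforward using the previous lemmas,'' and your chain $\vartheta_n(XYZ)=\vartheta_n(XY)Z=X\vartheta_n(Y)Z$ via parts (i) and (ii) of Lemma~\ref{lemmatraza1} is exactly the intended one-line deduction. Your extra care about $XY\in\E$ (via the inclusion $\mathcal{E}_{n-1}^{\mathtt B}\subseteq\E$ from Remark~\ref{tower}) is sound, though already implicit in the setup of the lemma itself.
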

\begin{proof}
  The proof is straightforward using the previous lemmas.
\end{proof}
\begin{lemma}\label{auxiliar}
  For $n\geq 2$, $X\in \mathcal{E}_{n-1}^\B$ and $Y\in \E$, we have
\begin{itemize}\label{commE}
  \item[(i)] $\vartheta_n(E_{n-1}X T_{n-1})=\vartheta_n(T_{n-1}X E_{n-1})$
  \item[(ii)] $\vartheta_{n-1}(\vartheta_{n}(E_{n-1}Y))=\vartheta_{n-1}(\vartheta_n(YE_{n-1}))$
\end{itemize}

\end{lemma}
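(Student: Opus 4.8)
The plan is to prove both identities by linearity, reducing each to a single basis element, and then to exploit the fact that the subalgebra $\mathcal{E}_{n-2}^{\mathtt{B}}$ commutes with both $E_{n-1}$ and $T_{n-1}$. Indeed, each of the generators $T_j,E_j$ ($j\leq n-3$), $B_1$ and $F_j$ ($j\leq n-2$) commutes with $E_{n-1}$ (using $E_iE_j=E_jE_i$, relation (\ref{last}), $F_iE_j=E_jF_i$, and $E_iT_j=T_jE_i$ for $|i-j|>1$) and with $T_{n-1}$ (using the braid relations, $B_1T_i=T_iB_1$, and $F_jT_{n-1}=T_{n-1}F_j$ for $j\leq n-2$, since by (\ref{perm}) the transposition $s_{n-1}$ fixes such $j$). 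Thus $\mathcal{E}_{n-2}^{\mathtt{B}}$ lies in the joint centralizer of $\{E_{n-1},T_{n-1}\}$, which is the structural input driving both parts.

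For part (i), I would first use the corollary to Lemma~\ref{lemmatraza1} ($\vartheta_n(aZb)=a\vartheta_n(Z)b$ for $a,b\in\mathcal{E}_{n-1}^{\mathtt{B}}$): any bimodule factors $a,b\in\mathcal{E}_{n-2}^{\mathtt{B}}$ of $X$ may be slid out of $E_{n-1}(\,\cdot\,)T_{n-1}$ past $E_{n-1}$ and $T_{n-1}$ and then pulled outside $\vartheta_n$. Hence it suffices to verify (i) when $X$ ranges over a set of $\mathcal{E}_{n-2}^{\mathtt{B}}$-bimodule generators of $\mathcal{E}_{n-1}^{\mathtt{B}}$; concretely, over $X\in\mathcal{E}_{n-2}^{\mathtt{B}}$ and over the "new" elements $T_{n-2},E_{n-2},F_{n-1},B_{n-1}$ dictated by the factor $\m_{n-1}\in\mathsf{M}_{n-1}$ and the block of the partition containing $n-1$. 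When $X\in\mathcal{E}_{n-2}^{\mathtt{B}}$ the identity is immediate from $E_{n-1}T_{n-1}=T_{n-1}E_{n-1}$. The remaining cases I would settle with the braids-and-ties relations $E_iT_jT_i=T_jT_iE_j$ (relation (\ref{lastA})) and $E_iE_jT_i=E_jT_iE_j=T_iE_iE_j$ (the line preceding (\ref{lastA})) taken with $\{i,j\}=\{n-1,n-2\}$: these are precisely what is needed to slide $E_{n-1}$ and $T_{n-1}$ symmetrically across $X$ so that the two words produce the same image under the defining formula (\ref{trace}).

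For part (ii), I would reduce $Y$ to a basis element $\w\,\m_n\,EF_I\in\mathcal{C}_n$. On the right-hand side I absorb $E_{n-1}$ into the idempotent part, $EF_I\,E_{n-1}=EF_{I*\{n-1,n\}}$ (here $F_A$ commutes with $E_{n-1}$ and the $E$-factor merges the blocks of $n-1$ and $n$; this equality follows from the bijection of Proposition~\ref{bijection} together with (\ref{ef}) and (\ref{efgen})), then evaluate $\vartheta_n$ by (\ref{trace}) and apply $\vartheta_{n-1}$. On the left-hand side I move $E_{n-1}$ to the right of $\w\,\m_n$ and again reduce via (\ref{trace}). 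The two outputs are then matched using the partition identities of Lemma~\ref{partitionsprop} and Lemma~\ref{equivalence} together with Lemma~\ref{multizq}: for $\m_n\in\{1,B_n\}$ this is a direct comparison of $(I*\{n-1,n\})\backslash n$ with the partition obtained by first tracing and then merging, whereas for $\m_n=\tmasn$ or $\tmenosn$ one first invokes part (i) to commute $E_{n-1}$ past the factor $T_{n-1}$ appearing inside $\m_n$ before the two traces are applied.

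The main obstacle is the combinatorial bookkeeping of the partition operations rather than any algebraic subtlety. In (ii) the delicate point is to show that merging $\{n-1,n\}$ and deleting $n$ commutes correctly with the operator $\tau_{n,k}$ and with the $\backslash n$ appearing in (\ref{trace}), in every sub-case of whether $n-1$, $n$ lie in single blocks, in a common block, or in the distinguished $0$-block; this is exactly the role of Lemmas~\ref{partitionsprop} and \ref{equivalence}, which must be applied with some care. In (i) the analogous pressure point is the loop case $\m_{n-1}=B_{n-1}$, where the interaction of $B_{n-1}$ with $E_{n-1}$ and $T_{n-1}$ must be controlled through Lemma~\ref{relaciones}; once these checks are organized by the factor $\m_{n-1}$ and the position of $n-1$ in the partition, the symmetry between $E_{n-1}X T_{n-1}$ and $T_{n-1}XE_{n-1}$ becomes transparent.
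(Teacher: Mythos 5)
Your structural observations are sound---$\mathcal{E}_{n-2}^{\mathtt{B}}$ does centralize $\{E_{n-1},T_{n-1}\}$, and the corollary to Lemma~\ref{lemmatraza1} does let you strip such factors out of $\vartheta_n$---but the reduction you build on them for part (i) contains a genuine gap: the set $\{1,T_{n-2},E_{n-2},F_{n-1},B_{n-1}\}$ does \emph{not} generate $\mathcal{E}_{n-1}^{\mathtt{B}}$ as an $\mathcal{E}_{n-2}^{\mathtt{B}}$-bimodule. Already $E_{n-2}T_{n-2}$ lies outside the span $\sum_g \mathcal{E}_{n-2}^{\mathtt{B}}\,g\,\mathcal{E}_{n-2}^{\mathtt{B}}$ over your list: in the smallest instance, $E_1T_1\in\mathcal{E}_2^{\mathtt{B}}$ is a basis element of the form $EF_{(\{1,2\})}T_{s_1}$, while products $a\,T_1\,b$ with $a,b\in\langle B_1,F_1\rangle$ can only produce $T_{s_1}$-components whose tie involves the fixed strand (e.g.\ $F_1T_1F_1=F_1F_2T_1=E_1F_1T_1$ by (\ref{perm}) and (\ref{ef})), never the bare tie $E_1$. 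More generally, a basis element $\wmenos\,{\mathtt m}_{n-1}EF_I$ with ${\mathtt m}_{n-1}\neq 1$ \emph{and} $n-1\in{\rm Supp}(I)$ carries two ``new'' letters separated by middles from $\mathcal{E}_{n-2}^{\mathtt{B}}$ that commute with neither, so your single-generator verification list misses precisely the combined cases. The paper avoids this by never decomposing into letters: it takes $X=\wmenos\,{\mathtt m}_{n-1}EF_I\in\mathcal{C}_{n-1}$ whole, slides $E_{n-1}$ through via the conjugation action of Lemma~\ref{action} (turning it into $E_{n,k}$ or a merge $I*\{\cdot,\cdot\}$ of the partition, which formula (\ref{trace}) absorbs directly), and does cases only on ${\mathtt m}_{n-1}\in\{1,\mathbb{T}_{n-1,k}^{\pm},B_{n-1}\}$. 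Note also that even on your listed generators you give no verification for the loop case: relations (\ref{lastA}) and its companion involve only $T$'s and $E$'s and say nothing about $B_{n-1}$; the paper's $B_{n-1}$ case requires actual trace evaluations and the identity $T_{n-1}B_{n-1}=\mathbb{T}_{n,n-1}^{-}$.

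For part (ii) the gap is of the same kind but worse. Organizing by ${\mathtt m}_n$ alone is insufficient: the second trace $\vartheta_{n-1}$ interacts with ${\mathtt m}_{n-1}$, so the paper must run joint cases on $({\mathtt m}_{n-1},{\mathtt m}_n)$, five of them nontrivial---including ${\mathtt m}_n=1$, ${\mathtt m}_{n-1}=\mathbb{T}_{n-1,k}^{\pm}$, which your ``direct comparison'' bucket does not actually cover without the partition identity $\tau_{n-1,k}(I_1)=\tau_{n-1,k}(I_2)$ from \cite[Lemma 2]{aijuMMJ}. More seriously, your plan to dispose of ${\mathtt m}_n=\mathbb{T}_{n,k}^{\pm}$ by ``invoking part (i) to commute $E_{n-1}$ past the $T_{n-1}$ inside ${\mathtt m}_n$'' does not close the argument: after (i) is applied, both sides still contain a single $T_{n-1}$ in different positions under $\vartheta_{n-1}\circ\vartheta_n$, and matching them is exactly the statement of Lemma~\ref{commT}---which the paper proves \emph{afterwards, using} part (ii), so this route is circular unless you redo the hands-on work. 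That work is where the substance lies: the paper's hard subcases (e.g.\ $k=n-1$ with ${\mathtt m}_{n-1}=\mathbb{T}_{n-1,j}^{\pm}$) need the expansion of $\mathbb{T}_{n-1,j}^{+}B_{n-1}$ via the quadratic relation, the identity $\mathbb{T}_{n-1,j}^{-}B_{n-1}=B_{n-2}\mathbb{T}_{n-1,j}^{-}$ from \cite[Lemma 8]{fjl}, and cancellation of the $(\U-\U^{-1})$-terms against the trace parameters $\x,\W,\z$. Your sketch correctly locates the pressure points but supplies none of these computations, and its two reduction devices (the bimodule generating set in (i), the appeal to (i) within (ii)) fail as stated; repairing them essentially forces you back to the paper's case-by-case proof on the basis $\mathcal{C}_n$.
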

\begin{proof}
As always, by linearity of the trace, we can consider $X$ and $Y$ in $\mathcal{C}_{n-1}$ and $\mathcal{C}_n$ respectively. Let $X=\wmenos {\mathtt m}_{n-1}EF_I$, with $I\in \mathcal{P}(\n_0\backslash \{n\})$, for proving (i) we will distinguish cases depending of the value of ${\mathtt m}_{n-1}$.\smallbreak
\noindent \textsc{Case:} ${\mathtt m}_{n-1}=1$. For one side, we have
\begin{eqnarray*}
  \vartheta_n(E_{n-1}X T_{n-1}) &=&  \vartheta_n(E_{n-1}\wmenos EF_I T_{n-1}) \\
   &=& \vartheta_n(\wmenos T_{n-1} EF_{s_{n-1}(I)}E_{n-1} ) \\
   &=& \vartheta_n(\wmenos T_{n-1} EF_{s_{n-1}(I)*\{n-1,n\}} \\
&=&\z \ \wmenos EF_{\tau_{n,n-1}(s_{n-1}(I)*\{n-1,n\})}
\end{eqnarray*}
On the other hand
\begin{eqnarray*}
 \vartheta_n(T_{n-1}X E_{n-1})  &=& \vartheta_n(T_{n-1}\wmenos EF_I E_{n-1}) \\
   &=&\vartheta_n(\wmenos T_{n-1} EF_{I*\{n-1,n\}} )  \\
   &=& \z\  \wmenos EF_{\tau_{n,n-1}(I*\{n-1,n\})}
\end{eqnarray*}
Now, if $n-1\not \in {\rm Supp}(I)$ the equality is clear, since $s_{n-1}(I)=I$. On the other hand, if $n-1 \in {\rm Supp}(I)$, it is not difficult to prove that $s_{n-1}(I)*\{n-1,n\}=I*\{n-1,n\}$.\\

\noindent\textsc{Case:} If ${\mathtt m}_{n-1}=\mathbb{T}_{n-1,k}^{\pm}$, we have
\begin{eqnarray*}
  \vartheta_n(E_{n-1}X T_{n-1})  &=& \vartheta_n(\wmenos \mathbb{T}_{n-1,k}^{\pm}E_{n,k} T_{n-1} EF_{s_{n-1}(I)}) \\
   &=& \vartheta_n(\wmenos \mathbb{T}_{n-1,k}^{\pm} T_{n-1} EF_{s_{n-1}(I)*\{n-1,k\}}) \\
   &=&\z\  \wmenos \mathbb{T}_{n-1,k}^{\pm} EF_{\tau_{n,n-1}(s_{n-1}(I)*\{n-1,k\})}
 \end{eqnarray*}
On the other hand
\begin{eqnarray*}
 \vartheta_n(T_{n-1}X E_{n-1}) &=& \vartheta_n(T_{n-1}\wmenos \mathbb{T}_{n-1,k}^{\pm} EF_{I}E_{n-1}) \\
   &=& \vartheta_n(\wmenos \mathbb{T}_{n,k}^{\pm} EF_{I*\{n-1,n\}}) \\
   &=&\z\  \wmenos\mathbb{T}_{n-1,k}EF_{\tau_{n,k}(I*\{n-1,n\})}
\end{eqnarray*}
and, it is easy to verify that the partitions from both cases are equal.\\

\noindent \textsc{Case:} If ${\mathtt m}_{n-1}=B_{n-1}$, we have
\begin{eqnarray*}
  \vartheta_n( E_{n-1}XT_{n-1})  &=&  \vartheta_n(E_{n-1}\wmenos B_{n-1}EF_IT_{n-1}) \\
   &=& \vartheta_n(\wmenos B_{n-1}T_{n-1}E_{n-1}EF_{s_{n-1}(I)}) \\
   &=& \vartheta_n(\wmenos B_{n-1}T_{n-1}EF_{s_{n-1}(I)*\{n-1,n\}}) \\
   &=&\z\ \wmenos B_{n-1}EF_{\tau_{n,n-1}(s_{n-1}(I)*\{n-1,n\})}
\end{eqnarray*}
On the other hand
\begin{eqnarray*}
 \vartheta_n(T_{n-1}X E_{n-1})  &=&  \vartheta_n(T_{n-1}\wmenos B_{n-1} EF_{I}E_{n-1})  \\
   &=& \vartheta_n(\wmenos T_{n-1}B_{n-1} EF_{I*\{n-1,n\}}) \\
   &=&\vartheta_n(\wmenos \mathbb{T}_{n,n-1}^{-} EF_{I*\{n-1,n\}})  \\
   &=& \z \ \wmenos B_{n-1}EF_{\tau_{n,n-1}(I*\{n-1,n\})})
\end{eqnarray*}
and we know by the first case that the partitions involved are equal, then we have already proved (i).\\

For proving (ii) we need more cases, since we have to apply two levels of the relative trace, then the result depends from the values of ${\mathtt m}_{n-1}$ and ${\mathtt m}_n$ of $Y=\wmenos {\mathtt m}_{n-1}{\mathtt m}_nEF_I\in \mathcal{C}_n$. \smallbreak
First note that for the cases $\m_n=1, \m_{n-1}=1$; $\m_n=1, \m_{n-1}=B_{n-1}$; $\m_n=B_n, \m_{n-1}=1$ and $\m_n=B_n, \m_{n-1}=B_{n-1}$, the result follows directly, since $E_{n-1}$ commute with $Y$ in each case, then we only have to analize five cases.\\

\noindent\textsc{Case:} If $\m_n=1$ and $\m_{n-1}=\mathbb{T}_{n-1,k}^{\pm}$.
\begin{eqnarray*}
  \vartheta_{n-1}(\vartheta_n(YE_{n-1})) &=& \vartheta_{n-1}(\vartheta_n(\wmenos\mathbb{T}_{n-1,k}^{\pm}EF_I E_{n-1})) \\
   &=& \vartheta_{n-1}(\vartheta_n(\wmenos\mathbb{T}_{n-1,k}^{\pm}EF_{I*\{n-1,n\}}) \\
   &=&\x\vartheta_{n-1}(\ \wmenos\mathbb{T}_{n-1,k}^{\pm}EF_{(I*\{n-1,n\})\backslash n)}) \\
   &=&\x\z\ \wmenos\mathbb{T}_{n-2,k}^{\pm}EF_{\tau_{n-1,k}(I_1)}
\end{eqnarray*}
where $I_1=(I*\{n-1,n\})\backslash n$. On the other hand
\begin{eqnarray*}
\vartheta_{n-1}(\vartheta_n(E_{n-1}Y))   &=& \vartheta_{n-1}(\vartheta_n(\wmenos E_{n-1}\mathbb{T}_{n-1,k}^{\pm} EF_I)) \\
   &=&\vartheta_{n-1}(\vartheta_n(\wmenos \mathbb{T}_{n-1,k}^{\pm} EF_{I*\{n,k\}}))  \\
   &=& \x\vartheta_{n-1}(\ \wmenos \mathbb{T}_{n-1,k}^{\pm} EF_{(I*\{n,k\})\backslash n})  \\
   &=&\x\z\ \wmenos \mathbb{T}_{n-2,k}^{\pm} EF_{\tau_{n-1,k}(I_2)}
\end{eqnarray*}
where $I_2= I*\{n,k\})\backslash n$. Further, we know by \cite[Lemma 2]{aijuMMJ} that $\tau_{n-1,k}(I_1)=\tau_{n-1,k}(I_2)$. Note that for case $\m_n=B_n$ and $\m_{n-1}=\mathbb{T}_{n-1,k}^{\pm}$ we have an analogous proof, since $E_{n,k}$ commutes with $B_{n}$, indeed, the only difference with this case is that when we apply the trace at level $n$ appear the parameter $\W$ instead $\x$.\\

\noindent\textsc{Case:} If $\m_n=\mathbb{T}_{n,k}^{\pm}$ and $\m_{n-1}=1$.
\begin{eqnarray*}
  \vartheta_{n-1}(\vartheta_n(YE_{n-1})) &=&  \vartheta_{n-1}(\vartheta_n(\wmenos\mathbb{T}_{n,k}^{\pm} EF_IE_{n-1}))\\
   &=&  \vartheta_{n-1}(\vartheta_n(\wmenos\mathbb{T}_{n,k}^{\pm}EF_{I*\{n-1,n\}}))\\
   &=& \z\ \wmenos\vartheta_{n-1}(\mathbb{T}_{n-1,k}^{\pm}EF_{I_1})
\end{eqnarray*}
where $I_1=\tau_{n,k}(I*\{n-1,n\})$. On the other hand
\begin{eqnarray*}
 \vartheta_{n-1}(\vartheta_n(E_{n-1}Y))  &=&  \vartheta_{n-1}(\vartheta_n(E_{n-1}\wmenos\mathbb{T}_{n,k}^{\pm} EF_I)) \\
   &=&\vartheta_{n-1}(\vartheta_n(\wmenos\mathbb{T}_{n,k}^{\pm} EF_{I*\{n,k\}}))  \\
   &=&\z\ \wmenos \vartheta_{n-1}(\mathbb{T}_{n-1,k}^{\pm} EF_{I_2})
\end{eqnarray*}
where $I_2=\tau_{n,k}(I*\{n,k\})$. First note that when $k=n-1$ the result follows directly, and if $k<n-1$ we obtain
$$ \vartheta_{n-1}(\mathbb{T}_{n-1,k}^{\pm} EF_{I_i})=\mathbb{T}_{n-2,k}^{\pm}EF_{\tau_{n-1,k}(I_i)}$$
Again by \cite[Lemma 2]{aijuMMJ} we have that $\tau_{n-1,k}(I_1)=\tau_{n-1,k}(I_2)$ and the result follows.\\

\noindent\textsc{Case:} If $\m_n=\mathbb{T}_{n,k}^{\pm}$ and $\m_{n-1}=B_{n-1}$.

\begin{eqnarray*}
 \vartheta_{n-1}(\vartheta_n(YE_{n-1}))  &=&   \vartheta_{n-1}(\vartheta_n(\wmenos B_{n-1} \mathbb{T}_{n,k}^{\pm} EF_I E_{n-1}))\\
 &=&\vartheta_{n-1}(\vartheta_n(\wmenos B_{n-1} \mathbb{T}_{n,k}^{\pm} EF_{I*\{n-1,n\}})) \\
   &=&\z\ \wmenos \vartheta_{n-1}( B_{n-1} \mathbb{T}_{n-1,k}^{\pm} EF_{I_1})
\end{eqnarray*}
On the other hand
\begin{eqnarray*}
 \vartheta_{n-1}(\vartheta_n(E_{n-1}Y))  &=&   \vartheta_{n-1}(\vartheta_n(E_{n-1}\wmenos B_{n-1} \mathbb{T}_{n,k}^{\pm} EF_I ))\\
&=& \vartheta_{n-1}(\vartheta_n(\wmenos B_{n-1} \mathbb{T}_{n,k}^{\pm} E_{n,k}EF_I ))\\
   &=&\z\ \wmenos \vartheta_{n-1}( B_{n-1} \mathbb{T}_{n-1,k}^{\pm} EF_{I_2})
\end{eqnarray*}
where $I_1=\tau_{n,k}(I*\{n-1,n\})$ and $I_2=\tau_{n,k}(I*\{n,k\}) $. Now, note that when $k=n-1$ the result is direct, then we can suppose that $k<n-1$, thus we have that

$$B_{n-1}\mathbb{T}_{n-1,k}^{\pm}=B_{n-1}T_{n-2}\mathbb{T}_{n-2,k}^{\pm}=\mathbb{T}_{n-1,n-2}^{-}\mathbb{T}_{n-2,k}^{\pm}=T_{n-2}B_{n-2}\mathbb{T}_{n-2,k}^{\pm}$$
Using this we obtain for one side
\begin{eqnarray*}
  \vartheta_{n-1}( B_{n-1} \mathbb{T}_{n-1,k}^{\pm} EF_{I_j}) &=&  \vartheta_{n-1}( \mathbb{T}_{n-1,n-2}^{-}\mathbb{T}_{n-2,k}^{\pm} EF_{I_j})\\
   &=& \vartheta_{n-1}( \mathbb{T}_{n-1,n-2}^{-} EF_{\sigma(I_j)})\mathbb{T}_{n-2,k}^{\pm} \\
 &=&\z\ B_{n-2} EF_{\tau_{n-1,n-2}(\sigma(I_j))}\mathbb{T}_{n-2,k}^{\pm}
\end{eqnarray*}
where $\sigma=\sigma_{n-2,k}$. Let see that the partitions are equal. let be $A_1,A_2,A_3$ the blocks of $I$ that contains $k,n-1$ and $n$ respectively, consider $I'$ as the partition that result by removing the blocks $A_1,A_2$ and $A_3$ from $I$. Then, we have
\begin{eqnarray*}
   I_1  &=&(I*\{n,n-1\})*\{n,k\})\backslash n= ((I*\{n,n-1, k\})\backslash n= I'*(A_1\cup A_2\cup A_3')  \\
    I_2 &=& ((I*\{n,k\})*\{n,k\})\backslash n= ((I*\{n,k\})\backslash n = I'*(A_1\cup A_3', A_2)
\end{eqnarray*}
where $A_3'=A_3\backslash \{n\}$. Therefore
\begin{eqnarray*}
   \sigma(I_1)  &=& \sigma(I')*(\sigma(A_1)\cup \sigma(A_2)\cup \sigma(A_3'))  \\
    \sigma (I_2) &=&\sigma(I')*(\sigma(A_1)\cup \sigma(A_3'), \sigma(A_2))
\end{eqnarray*}
now, note that $\sigma(k)=n-2$ and $ \sigma(n-1)=n-1$, then we have
\begin{eqnarray*}
  \tau_{n-1,n-2}(\sigma(I_1))  &=& (\sigma(I_2)*\{n-1,n-2\})\backslash n-1= \sigma(I')*(\sigma(A_1)\cup \sigma(A_2')\cup \sigma(A_3')) \\
   \tau_{n-1,n-2}(\sigma (I_2)) &=& (\sigma(I_1)*\{n-1,n-2\})\backslash n-1 = \sigma(I')*(\sigma(A_1)\cup \sigma(A_3')\cup \sigma(A_2'))
\end{eqnarray*}
where $A_2=A_2\backslash \{n-1\}$.\\

\noindent \textsc{Case:} If $\m_n=\mathbb{T}_{n,k}^{\pm}$ and $\m_{n-1}=\mathbb{T}_{n-1,j}^{\pm}$. Similarly as the last case, we have
\begin{eqnarray*}
   \vartheta_{n-1}(\vartheta_n(YE_{n-1}))  &=&   \vartheta_{n-1}(\vartheta_n(\wmenos \mathbb{T}_{n-1,j}^{\pm} \mathbb{T}_{n,k}^{\pm} EF_{I*\{n-1,n\}})) \\
   &=& \z\ \wmenos \vartheta_{n-1}(\mathbb{T}_{n-1,j}^{\pm} \mathbb{T}_{n-1,k}^{\pm} EF_{I_1})
\end{eqnarray*}
On the other hand
\begin{eqnarray*}
   \vartheta_{n-1}(\vartheta_n(E_{n-1}Y))  &=&   \vartheta_{n-1}(\vartheta_n(\wmenos E_{n-1}\mathbb{T}_{n-1,j}^{\pm} \mathbb{T}_{n,k}^{\pm} EF_{I})) \\
   &=&  \wmenos \vartheta_{n-1}(\vartheta_n(\mathbb{T}_{n-1,j}^{\pm} \mathbb{T}_{n-1,k}^{\pm} E_{a,b}EF_{I}))\\
&=& \z\ \wmenos \vartheta_{n-1}(\mathbb{T}_{n-1,j}^{\pm} \mathbb{T}_{n-1,k}^{\pm} EF_{I_2})
\end{eqnarray*}
where $I_1=\tau_{n,k}(I*\{n-1,n\})$, $I_2=\tau_{n,k}(I*\{a,b\})$ and
$$\{a,b\}=\left\{\begin{array}{ll}
            \{k,j\} & \text{if $j<k$}  \\
           \{k,j+1\}  & \text{if $j\geq k$}
          \end{array}\right.
$$
\noindent $\bullet$ \textsc{Subcase $k<n-1$:} First, if $j=n-2$ implies that $j\geq k$, therefore $\{a,b\}=\{j+1,k\}=\{n-1,k\}$, thus $I_1=I_2$, and the result follows. Then, we can suppose $j<n-2$, and we obtain
$$\mathbb{T}_{n-1,j}^{\pm} \mathbb{T}_{n-1,k}^{\pm}=T_{n-2}T_{n-3}\mathbb{T}_{n-3,j}^{\pm} T_{n-2}\mathbb{T}_{n-2,k}=T_{n-2}T_{n-3}T_{n-2}\mathbb{T}_{n-3,j}^{\pm}\mathbb{T}_{n-2,k}^{\pm}=T_{n-3}T_{n-2}\mathbb{T}_{n-2,j}^{\pm}\mathbb{T}_{n-2,k}^{\pm}$$
thus
\begin{eqnarray*}
\vartheta_{n-1}( \mathbb{T}_{n-1,j}^{\pm} \mathbb{T}_{n-1,k}^{\pm}EF_{I_1})   i&=&T_{n-3}\vartheta_{n-1}(T_{n-2}EF_{I_i'}) \mathbb{T}_{n-2,j}^{\pm}\mathbb{T}_{n-2,k}^{\pm} \\
   &=& \z \ T_{n-3}EF_{\tau_{n-1,n-2}(I_i')} \mathbb{T}_{n-2,j}^{\pm}\mathbb{T}_{n-2,k}^{\pm} \\
\end{eqnarray*}
where $I_i'=\sigma(I_i)$, with $\sigma=\sigma_{n-2,j}\sigma_{n-2,k}$, for $i=1,2$. And, it is known that $\tau_{n-1,n-2}(I_1')=\tau_{n-1,n-2}(I_2')$ by \cite[Lemma 2]{aijuMMJ}. \\

\noindent $\bullet$ \textsc{Subcase $k=n-1$:} We have that $\mathbb{T}_{n-1,k}^{\pm}=1$ or $B_{n-1}$, for the first case the result is direct. Then, suppose $\mathbb{T}_{n-1,k}^{\pm}=B_{n-1}$, we proceed with the positive case first, that is $\m_{n-1}=\mathbb{T}_{n-1,j}^+$. Note that
\begin{eqnarray*}
\mathbb{T}_{n-1,j}^+  B_{n-1} &=& T_{n-2}\cdots T_j T_{n-2}\cdots T_1 B_1 T_1^{-1}\cdots T_{n-2}^{-1} \\
   &=& T_{n-2}^2\cdots T_1 B_1 T_1^{-1}\cdots T_{n-2}^{-1}\mathbb{T}_{n-2,j}^+  \\
   &=&  T_{n-3}\cdots T_1 B_1 T_1^{-1}\cdots T_{n-2}^{-1}\mathbb{T}_{n-2,j}^+ + (\U-\U^{-1})B_{n-1}E_{n-2}\mathbb{T}_{n-2,j}^+\\
   &=& B_{n-2}T_{n-2}\mathbb{T}_{n-2,j}^+-(\U-\U^{-1})B_{n-2}E_{n-2}\mathbb{T}_{n-2,j}^++(\U-\U^{-1})B_{n-1}E_{n-2}\mathbb{T}_{n-2,j}^+
\end{eqnarray*}
Therefore we obtain
\begin{eqnarray*}
  \vartheta_{n-1}(\mathbb{T}_{n-1,j}^{+} B_{n-1} EF_{I_i}) &=& \vartheta_{n-1}(B_{n-2}T_{n-2}\mathbb{T}_{n-2,j}^+ EF_{I_i})- (\U-\U^{-1})\vartheta_{n-1}(B_{n-2}E_{n-2}\mathbb{T}_{n-2,j}^+EF_{I_i})+\\
 &&(\U-\U^{-1})\vartheta_{n-1}(B_{n-1}E_{n-2}\mathbb{T}_{n-2,j}^+EF_{I_i})\\
   &=& B_{n-2}\vartheta_{n-1}(T_{n-2} EF_{I_i'})\mathbb{T}_{n-2,j}^+- (\U-\U^{-1})B_{n-2}\vartheta_{n-1}(EF_{I_i'*\{n-1,n-2\}})\mathbb{T}_{n-2,j}^++\\
&&(\U-\U^{-1})\vartheta_{n-1}(B_{n-1}EF_{I_i'*\{n-1,n-2\}})\mathbb{T}_{n-2,j}^+\\
   &=&  \z B_{n-2} EF_{\tau_{n-1,n-2}(I_i')}\mathbb{T}_{n-2,j}^+- \x(\U-\U^{-1})B_{n-2}EF_{(I_i'*\{n-1,n-2\})\backslash n-1}\mathbb{T}_{n-2,j}^++\\
&& \W (\U-\U^{-1})B_{n-1}EF_{(I_i'*\{n-1,n-2\})\backslash n-1}))\mathbb{T}_{n-2,j}^+\\
\end{eqnarray*}
where $I_j'=\sigma(I_j)$ with $\sigma=\sigma_{n-2,j}$. First, note that by definition $(I_j'*\{n-1,n-2\})\backslash n-1=\tau_{n-1,n-2}(I_j')$, also we have $j<k$, since $k=n-1$. Therefore, we can deduce from the last case that $\tau_{n-1,n-2}(I_1')=\tau_{n-1,n-2}(I_2')$.\smallbreak
Finally, suppose that $\m_{n-1}=\mathbb{T}_{n-1,j}^-$. For (i) \cite[Lemma 8]{fjl} (taking $m=0$) we have that
$$\mathbb{T}_{n-1,j}^-B_{n-1}=B_{n-2}\mathbb{T}_{n-1,j}^-$$
Therefore
\begin{eqnarray*}
 \vartheta_{n-1}(\mathbb{T}_{n-1,j}^-B_{n-1})  &=& \vartheta_{n-1}(B_{n-2} \mathbb{T}_{n-1,j}^-EF_{I_i})  \\
   &=& B_{n-2}\vartheta_{n-1}( \mathbb{T}_{n-1,j}^-EF_{I_i}) \\
   &=& \z B_{n-2} \mathbb{T}_{n-2,j}^-EF_{\tau_{n-1,j}(I_i)}
\end{eqnarray*}
for $i=1,2$. Then, we have to verify that $\tau_{n-1,j}(I_1)=\tau_{n-1,j}(I_2)$. Since we are supposing $k=n-1$, we have that
\begin{eqnarray*}
  I_1 &=&I*\{n-1,n\}\backslash n  \\
   I_2 &=& I*\{n,j,n-1\} \backslash n
\end{eqnarray*}
and therefore $(I_1*\{n-1,j\})\backslash n-1=(I_2*\{n-1,j\})\backslash n-1$.
\end{proof}

\begin{lemma}\label{conjugation}
  For $n\geq 2$ and $X\in \mathcal{E}_{n-1}^{\mathtt B}$. We have
\begin{itemize}
  \item[(i)] $\vartheta_n(T_{n-1}XT_{n-1}^{-1})=\vartheta_{n-1}(X)=\vartheta_n(T_{n-1}^{-1}XT_{n-1})$
 \end{itemize}
\end{lemma}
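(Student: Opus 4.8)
The plan is to prove the first identity $\vartheta_n(T_{n-1}XT_{n-1}^{-1})=\vartheta_{n-1}(X)$ by a direct computation on the basis $\mathcal{C}_{n-1}$, and then to deduce the second one from the first together with Lemma~\ref{auxiliar}(i). By linearity of $\vartheta_n$ and $\vartheta_{n-1}$ I would first assume $X=\wmenos\,{\mathtt m}_{n-1}EF_I\in\mathcal{C}_{n-1}$, with ${\mathtt m}_{n-1}\in\mathsf{M}_{n-1}$ and $I\in\mathcal{P}(\n_0\backslash\{n\})$. The factor $\wmenos$ only involves the generators $B_1,T_1,\dots,T_{n-3}$, hence commutes with $T_{n-1}^{\pm1}$ and with $E_{n-1}$; moving it to the front and applying the corollary following Lemma~\ref{lemmatraza1} (and the same bilinearity for $\vartheta_{n-1}$), both sides acquire the common left factor $\wmenos$. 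Thus it suffices to treat $X={\mathtt m}_{n-1}EF_I=\mathbb{T}^{\pm}_{n-1,j}EF_I$ for some $j\le n-1$.

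I would then run the case analysis on ${\mathtt m}_{n-1}$, the main tools being Lemma~\ref{action} in the form $T_{n-1}EF_IT_{n-1}^{-1}=EF_{s_{n-1}(I)}$, the identity $T_{n-1}B_{n-1}T_{n-1}^{-1}=B_n$, and the braid manipulation $\mathbb{T}^{\pm}_{n,j}T_{n-1}^{-1}=T_{n-2}^{-1}\mathbb{T}^{\pm}_{n,j}$ for $j<n-1$. For ${\mathtt m}_{n-1}=1$ and for ${\mathtt m}_{n-1}=B_{n-1}$ one rewrites $T_{n-1}XT_{n-1}^{-1}$ as $EF_{s_{n-1}(I)}$, respectively $B_nEF_{s_{n-1}(I)}$, and the defining cases of $\vartheta_n$ reproduce exactly the defining cases of $\vartheta_{n-1}(X)$, using $s_{n-1}(I)\backslash n=I\backslash(n-1)$ when $n-1\in{\rm Supp}(I)$. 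For ${\mathtt m}_{n-1}=\mathbb{T}^{\pm}_{n-1,j}$ with $j<n-1$, after $T_{n-1}\mathbb{T}^{\pm}_{n-1,j}=\mathbb{T}^{\pm}_{n,j}$ and pushing $T_{n-1}^{-1}$ through I obtain $T_{n-2}^{-1}\mathbb{T}^{\pm}_{n,j}EF_{s_{n-1}(I)}$; factoring $T_{n-2}^{-1}$ out by bilinearity, applying the third case of $\vartheta_n$, and simplifying $T_{n-2}^{-1}\mathbb{T}^{\pm}_{n-1,j}=\mathbb{T}^{\pm}_{n-2,j}$ leaves $\z\,\mathbb{T}^{\pm}_{n-2,j}EF_{\tau_{n,j}(s_{n-1}(I))}$, which must match $\vartheta_{n-1}(X)=\z\,\mathbb{T}^{\pm}_{n-2,j}EF_{\tau_{n-1,j}(I)}$.

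The remaining point is the partition identity $\tau_{n,j}(s_{n-1}(I))=\tau_{n-1,j}(I)$. I would prove it from Lemma~\ref{partitionsprop}: applying $s_{n-1}$ to $\tau_{n-1,j}(I)=(I*\{n-1,j\})\backslash(n-1)$ and using parts (i)--(ii) (with $s_{n-1}(n-1)=n$, $s_{n-1}(j)=j$) turns it into $(s_{n-1}(I)*\{n,j\})\backslash n=\tau_{n,j}(s_{n-1}(I))$; since both partitions lie in $\mathcal{P}(\{0,\dots,n-2\})$, on which $s_{n-1}$ acts trivially, the two sides coincide. This settles the first identity. For the second equality I would avoid a fresh case analysis: writing $T_{n-1}^{-1}=T_{n-1}-(\U-\U^{-1})E_{n-1}$ gives
$$T_{n-1}^{-1}XT_{n-1}=T_{n-1}XT_{n-1}^{-1}+(\U-\U^{-1})\bigl(T_{n-1}XE_{n-1}-E_{n-1}XT_{n-1}\bigr),$$
so that $\vartheta_n(T_{n-1}^{-1}XT_{n-1})=\vartheta_n(T_{n-1}XT_{n-1}^{-1})+(\U-\U^{-1})\bigl(\vartheta_n(T_{n-1}XE_{n-1})-\vartheta_n(E_{n-1}XT_{n-1})\bigr)$; by Lemma~\ref{auxiliar}(i) the bracket vanishes, and the first identity yields $\vartheta_n(T_{n-1}^{-1}XT_{n-1})=\vartheta_{n-1}(X)$.

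I expect the main obstacle to be the case ${\mathtt m}_{n-1}=\mathbb{T}^{\pm}_{n-1,j}$ with $j<n-1$: keeping the braid reductions and indices straight while commuting $T_{n-1}^{\pm1}$ past $\mathbb{T}^{\pm}_{n,j}$, and above all deriving the partition identity $\tau_{n,j}(s_{n-1}(I))=\tau_{n-1,j}(I)$ cleanly from Lemma~\ref{partitionsprop} rather than by \emph{ad hoc} block-chasing. The other ingredients (the reduction factoring out $\wmenos$, the two easy cases, and the cancellation giving the second equality) are routine once the bilinearity of $\vartheta_n$ and Lemma~\ref{auxiliar}(i) are in hand.
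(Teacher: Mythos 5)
Your proposal is correct and follows essentially the same route as the paper's proof: the same case analysis on ${\mathtt m}_{n-1}\in\{1,\,B_{n-1},\,\mathbb{T}^{\pm}_{n-1,k}\}$ using $T_{n-1}EF_IT_{n-1}^{-1}=EF_{s_{n-1}(I)}$, $T_{n-1}B_{n-1}T_{n-1}^{-1}=B_n$ and $\mathbb{T}^{\pm}_{n,k}T_{n-1}^{-1}=T_{n-2}^{-1}\mathbb{T}^{\pm}_{n,k}$, and the same deduction of the second equality from the first via $T_{n-1}^{-1}=T_{n-1}-(\U-\U^{-1})E_{n-1}$ together with Lemma~\ref{auxiliar}(i). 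If anything, you are slightly more careful than the paper, which merely asserts the partition identity $\tau_{n,k}(s_{n-1}(I))=\tau_{n-1,k}(I)$ where you derive it cleanly from Lemma~\ref{partitionsprop}.
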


\begin{proof}
Consider $X=\wmenos\m_{n-1}EF_I$, with $I\in \mathcal{P}(\n_0\backslash \{n\})$. We proceed by cases according to the value of $\m_{n-1}$.\\

\noindent\textsc{Case:} When $\m_{n-1}=1$ the results follows easily. Indeed, if $n-1\not\in {\rm Supp}(I)$ the result is direct, and when $n-1\in {\rm Supp}(I)$ we have \\
\begin{eqnarray*}
  \vartheta_n(T_{n-1}XT_{n-1}^{-1})&=& \vartheta_n(T_{n-1}\wmenos EF_IT_{n-1}^{-1}) \\
   &=& \vartheta_n(\wmenos EF_{s_{n-1}(I)} \cancel{T_{n-1}T_{n-1}^{-1}})\\
&=& \wmenos EF_{(s_{n-1}(I))\backslash n}=\wmenos EF_{I\backslash n-1}= \vartheta_{n-1}(X)
\end{eqnarray*}
and the right side follows analogously.\\

\noindent\textsc{Case:} When $\m_{n-1}=B_{n-1}$, we have
\begin{eqnarray*}
  \vartheta_n(T_{n-1}\wmenos B_{n-1}EF_IT_{n-1}^{-1}) &=&  \vartheta_n(\wmenos B_{n}EF_{s_{n-1}(I)}) \\
\end{eqnarray*}
Then when $n-1\in {\rm Supp}(I)$ we obtain
\begin{eqnarray*}
 \vartheta_n(\wmenos B_{n}EF_{s_{n-1}(I)})&=&\W \wmenos EF_{s_{n-1}(I)\backslash n}\\
&=& \W \wmenos EF_{I\backslash n-1}= \vartheta_{n-1}(X)
\end{eqnarray*}
the opposite case follows analogously.\smallbreak

\noindent\textsc{Case:} When $\m_{n-1}=\mathbb{T}_{n-1,k}^{\pm}$ with
$k<n-1$, we have
\begin{eqnarray*}
 \vartheta_n(T_{n-1}XT_{n-1}^{-1}) &=& \vartheta_n(\wmenos T_{n-1} \mathbb{T}_{n-1,k}^{\pm} EF_{I} T_{n-1}^{-1}) \\
   &=&\vartheta_n(\wmenos \mathbb{T}_{n,k}^{\pm} T_{n-1}^{-1} EF_{s_{n-1}(I)})  \\
   &=& \vartheta_n(\wmenos T_{n-2}^{-1}\mathbb{T}_{n,k}^{\pm} EF_{s_{n-1}(I)})  \\
   &=& \z\ \wmenos T_{n-2}^{-1} \mathbb{T}_{n-1,k}^{\pm} EF_{\tau_{n,k}(s_{n-1}(I))}\\
 &=&\z\ \wmenos \mathbb{T}_{n-2,k}^{\pm} EF_{\tau_{n,k}(s_{n-1}(I))}
\end{eqnarray*}
It is not difficult to prove that independently if $n-1$ belong to ${\rm Supp}(I)$ or not, we have that $\tau_{n,k}(s_{n-1}(I))=\tau_{n-1,k}(I)$, thus we obtain
$$\z\ \wmenos \mathbb{T}_{n-2,k}^{\pm} EF_{\tau_{n-1,k}(I)}=\vartheta_{n-1}(X)$$

Finally we have
$$T_{n-1}XT_{n-1}^{-1}=T_{n-1}^{-1}XT_{n-1}+(\U-\U^{-1})(E_{n-1}XT_{n-1}-T_{n-1}XE_{n-1})$$
for $X\in \E$, for details see \cite[Lemma 8]{fjl}. Then applying this relation and (i) Lemma~\ref{commE} we obtain that
$$\vartheta_n(T_{n-1}XT_{n-1}^{-1})=\vartheta_n(T_{n-1}^{-1}XT_{n-1})$$
and (ii) follows.
\end{proof}

\begin{lemma}\label{commT}
  For all $X\in \E$, we have
\begin{equation}\label{commTeq}
  \vartheta_{n-1}(\vartheta_n(XT_{n-1}))=\vartheta_{n-1}(\vartheta_n(T_{n-1}X))
\end{equation}
\end{lemma}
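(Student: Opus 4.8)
The plan is to use the linearity of both relative traces to reduce the identity to a single element $X=\w\,\m_n EF_I$ of the basis $\mathcal{C}_n$, where $\w=\m_1\cdots\m_{n-1}\in\mathcal{E}_{n-1}^{\B}$, $\m_n\in\mathsf{M}_n$ and $I\in\pcero$. The point of this normal form is that the generator $T_{n-1}$ can be isolated inside $\m_n$: for $k<n$ one has $\mathbb{T}_{n,k}^{\pm}=T_{n-1}\,\mathbb{T}_{n-1,k}^{\pm}$ with $\mathbb{T}_{n-1,k}^{\pm}\in\mathcal{E}_{n-1}^{\B}$, and $B_n=T_{n-1}B_{n-1}T_{n-1}^{-1}$, so that in every case $X$ is obtained from boundary pieces ($1,\ T_{n-1},\ E_{n-1},\ F_n,\ B_n$) by left/right multiplication with elements of $\mathcal{E}_{n-1}^{\B}$. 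First I would use the Corollary to Lemma~\ref{lemmatraza1} to strip off those factors of $\w$ that commute with $T_{n-1}$, thereby concentrating the whole interaction at positions $n-1,n$. The base case $\m_n=1$, $n\notin{\rm Supp}(I)$ is immediate: here $X\in\mathcal{E}_{n-1}^{\B}$, and both $\vartheta_n(XT_{n-1})=X\vartheta_n(T_{n-1})$ and $\vartheta_n(T_{n-1}X)=\vartheta_n(T_{n-1})X$ equal $\z X$, so the two sides already agree before applying $\vartheta_{n-1}$.

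The organizing identity for the remaining cases is the quadratic relation written as $T_{n-1}WT_{n-1}=T_{n-1}WT_{n-1}^{-1}+(\U-\U^{-1})\,T_{n-1}WE_{n-1}$, valid since $E_{n-1}$ commutes with $T_{n-1}^{\pm 1}$. When I compute $\vartheta_{n-1}\vartheta_n(XT_{n-1})$ and $\vartheta_{n-1}\vartheta_n(T_{n-1}X)$ and pull the appropriate $\mathcal{E}_{n-1}^{\B}$--factor out (on the left for the first expression, on the right for the second), the conjugation part $T_{n-1}(\cdot)T_{n-1}^{-1}$ is evaluated by Lemma~\ref{conjugation}, producing a term $\vartheta_{n-1}(\cdot)$, and the two resulting leading terms coincide after applying Lemma~\ref{lemmatraza1} at level $n-1$. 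The leftover cross terms carry the factor $E_{n-1}$, and these are exactly what Lemma~\ref{commE} is designed to match, moving $E_{n-1}$ across $\vartheta_n$ (part (i)) and across the double trace $\vartheta_{n-1}\vartheta_n$ (part (ii)). For all pieces built only from the $T_i,E_i,F_i$ (i.e. not involving $B_1$ or $F_n$), this matching is the one already carried out for the bt--algebra in \cite[Section~5]{aijuMMJ}, and I would simply transport it through the change--of--generators automorphism of Remark~\ref{changvar}, invoking \cite[Lemma~2]{aijuMMJ} for the accompanying set--partition identities.

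The genuinely new content is the set of cases in which $B$--type data appear: $\m_n=B_n$, $\m_n=\mathbb{T}_{n,k}^{-}$, the subcase $n\in{\rm Supp}(I)$ where $EF_I$ carries a factor $F_n$, and whenever a factor of $B_1$ must be commuted past $T_{n-1}\cdots T_1$. In these situations Lemma~\ref{relaciones}(ii) together with the $\V$--quadratic relation (\ref{quadratictipoB}) produces the correction terms $\alpha_{n,k}$, which I would then evaluate with $\vartheta_n$ exactly as in the proof of Lemma~\ref{lemmatraza1} (expanding $B_1^2$ and using the trace values $\y,\W$) and show that they cancel in pairs; the required equalities of partitions are obtained from Lemmas~\ref{partitionsprop} and~\ref{equivalence} and the operations $\ast\{n,k\}$, $\backslash n$, $\tau_{n,k}$. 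I expect the main obstacle to lie precisely here: keeping track of these $E_{n-1}$-- and $\V$--cross terms and verifying that the set partitions generated on the left and on the right genuinely coincide after the reductions. The braid--type manipulations are routine; it is the interplay of Lemma~\ref{commE}, Lemma~\ref{conjugation} and the partition bookkeeping that does the real work, and the $B$--type cancellations are the step most likely to require care.
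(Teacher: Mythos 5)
Your proposal follows essentially the same route as the paper's proof: both reduce by linearity to basis elements $X=\w\,\m_n EF_I\in\mathcal{C}_n$, split into cases according to $\m_n$ and $\m_{n-1}$, trade $T_{n-1}$ for $T_{n-1}^{-1}$ plus an $(\U-\U^{-1})E_{n-1}$ cross term handled through Lemma~\ref{commE} together with Lemma~\ref{conjugation}, import the pure bt--algebra cases from \cite[Section 5]{aijuMMJ} via Remark~\ref{changvar}, and dispatch the genuinely new $B$--type cases by expanding the quadratic relations and matching set partitions with Lemma~\ref{partitionsprop}. The plan is sound and correctly locates the real work in the cancellation of the $E_{n-1}$-- and $\V$--cross terms and in the partition bookkeeping, which is exactly where the paper's case-by-case computations concentrate.
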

\begin{proof}
  First note that the Eq. (\ref{commTeq}) is equivalent to
\begin{equation}\label{commTalt}
  \vartheta_{n-1}(\vartheta_n(XT_{n-1}^{-1}))=\vartheta_{n-1}(\vartheta_n(T_{n-1}^{-1}X))
\end{equation}

which can be obtained using (ii) Lemma~\ref{commE} and the formula for the inverse, cf. \cite{chpoIMRN}. Then, sometimes we will prove this assertion instead of (\ref{commTeq}) according to its difficulty.\smallbreak
As always we consider $X=\w\m_nEF_I$, and we will distinguish cases according to the possibilities of $\m_n$ and $\m_{n-1}$. We omit the case $\m_n=\m_{n-1}=1$ since it is straightforward.\\

\noindent\textsc{Case:} $\m_n=1$, $\m_{n-1}=\mathbb{T}_{n-1,k}^{\pm}$ with $k<n-1$
\begin{eqnarray*}
  \vartheta_{n-1}(\vartheta_n(T_{n-1}X)) &=&\vartheta_{n-1}(\vartheta_n(T_{n-1}\wmenos\mathbb{T}_{n-1,k}^{\pm} EF_I)) \\
   &=& \vartheta_{n-1}(\vartheta_n(\wmenos T_{n-1}\mathbb{T}_{n-1,k}^{\pm} EF_I))\\
   &=&\wmenos \vartheta_{n-1}(\vartheta_n( \mathbb{T}_{n,k}^{\pm} EF_I))  \\
   &=&\z \wmenos \vartheta_{n-1}(\mathbb{T}_{n-1,k}^{\pm} EF_{\tau_{n,k}(I)})\\
   &=&\z^2 \wmenos \mathbb{T}_{n-2,k}^{\pm} EF_{\tau_{n-1,k}(\tau_{n,k}(I))}
\end{eqnarray*}
On the other hand
\begin{eqnarray*}
  \vartheta_{n-1}(\vartheta_n(XT_{n-1})) &=& \vartheta_{n-1}(\vartheta_n(\wmenos\mathbb{T}_{n-1,k}^{\pm} EF_IT_{n-1})) \\
   &=& \vartheta_{n-1}(\vartheta_n(\wmenos\mathbb{T}_{n-1,k}^{\pm}T_{n-1} EF_{s_{n-1}(I)})) \\
   &=& \z \wmenos\vartheta_{n-1}(\mathbb{T}_{n-1,k}^{\pm} EF_{\tau_{n,n-1}(s_{n-1}(I))}) \\
   &=& \z^2 \wmenos\mathbb{T}_{n-2,k}^{\pm} EF_{\tau_{n-1,k}(\tau_{n,n-1}(s_{n-1}(I)))}
\end{eqnarray*}
Now, note that $\tau_{n,n-1}(s_{n-1}(I))=\tau_{n,n-1}(I)$, then, it is clear that $\tau_{n-1,k}(\tau_{n,n-1}(s_{n-1}(I)))=\tau_{n-1,k}(\tau_{n,k}(I))$.\\

\noindent\textsc{Case:} $\m_n=1$, $\m_{n-1}=B_{n-1}$
\begin{eqnarray*}
  \vartheta_{n-1}(\vartheta_n(T_{n-1}X)) &=&\vartheta_{n-1}(\vartheta_n(T_{n-1}\wmenos B_{n-1} EF_I)) \\
   &=& \vartheta_{n-1}(\vartheta_n(\wmenos T_{n-1}B_{n-1} EF_I))\\
   &=&\z \wmenos \vartheta_{n-1}(B_{n-1} EF_{\tau_{n,n-1}(I)})
 \end{eqnarray*}
On the other hand
\begin{eqnarray*}
  \vartheta_{n-1}(\vartheta_n(X T_{n-1})) &=&\vartheta_{n-1}(\vartheta_n(\wmenos B_{n-1} EF_I T_{n-1})) \\
   &=& \vartheta_{n-1}(\vartheta_n(\wmenos B_{n-1} T_{n-1} EF_{s_{n-1}(I)}))\\
   &=&\z \wmenos \vartheta_{n-1}( B_{n-1} EF_{\tau_{n,n-1}(s_{n-1}(I))})
 \end{eqnarray*}
and we know by the last case that the partitions involved are equal.\\

\noindent\textsc{Case:} $\m_n=\mathbb{T}_{n,k}^{\pm}$ with $k<n$. In this case we will prove (\ref{commTalt}). First suppose that $n\not\in {\rm Supp}(I)$
\begin{eqnarray*}
 \vartheta_{n-1}(\vartheta_n(T_{n-1}^{-1}X))  &=& \vartheta_{n-1}(\vartheta_n(T_{n-1}^{-1}\w\mathbb{T}_{n,k}^{\pm}EF_I))\\
   &=& \vartheta_{n-1}(\vartheta_n(T_{n-1}^{-1}\w T_{n-1}\mathbb{T}_{n-1,k}^{\pm}EF_I))\\
   &=& \vartheta_{n-1}(\vartheta_n(T_{n-1}^{-1}\w T_{n-1})\mathbb{T}_{n-1,k}^{\pm}EF_I) \quad\text{(by Lemma~\ref{conjugation})}\\
   &=& \vartheta_{n-1}(\vartheta_{n-1}(\w)\mathbb{T}_{n-1,k}^{\pm}EF_I)=\vartheta_{n-1}(\w) \vartheta_{n-1}(\mathbb{T}_{n-1,k}^{\pm}EF_I)
\end{eqnarray*}
On the other hand

\begin{eqnarray*}
 \vartheta_{n-1}(\vartheta_n(X T_{n-1}^{-1}))  &=& \vartheta_{n-1}(\vartheta_n(\w\mathbb{T}_{n,k}^{\pm}EF_I T_{n-1}^{-1}))\\
   &=& \vartheta_{n-1}(\vartheta_n(\w T_{n-1}\mathbb{T}_{n-1,k}^{\pm}EF_IT_{n-1}^{-1}))\\
   &=&\vartheta_{n-1}(\w\vartheta_n( T_{n-1}(\mathbb{T}_{n-1,k}^{\pm}EF_I)T_{n-1}^{-1})\quad\text{(by Lemma~\ref{conjugation})} \\
   &=& \vartheta_{n-1}(\w\vartheta_{n-1}( \mathbb{T}_{n-1,k}^{\pm}EF_I))=\vartheta_{n-1}(\w) \vartheta_{n-1}(\mathbb{T}_{n-1,k}^{\pm}EF_I)
\end{eqnarray*}
From now on we suppose that $n\in {\rm Supp}(I)$.\smallbreak

\noindent $\bullet$\textsc{Subcase: }$\m_{n-1}=1$.Fisrt note, that for $k=n-1$ the result follows easily. Then, we can suppose $k<n-1$
\begin{eqnarray*}
 \vartheta_{n-1}(\vartheta_n(T_{n-1}^{-1}X))  &=& \vartheta_{n-1}(\vartheta_n(T_{n-1}^{-1}\wmenos\mathbb{T}_{n,k}^{\pm}EF_I))\\
   &=& \vartheta_{n-1}(\vartheta_n(\wmenos \mathbb{T}_{n-1,k}^{\pm}EF_I))\\
   &=& \x\ \wmenos\vartheta_{n-1}( \mathbb{T}_{n-1,k}^{\pm}EF_{I\backslash n})  \\
   &=& \z\x\ \wmenos \mathbb{T}_{n-2,k}^{\pm}EF_{\tau_{n-1,k}(I\backslash n)}
\end{eqnarray*}
On the other hand
\begin{eqnarray*}
   \vartheta_{n-1}(\vartheta_n(X T_{n-1}^{-1})) &=& \vartheta_{n-1}(\vartheta_n(\wmenos\mathbb{T}_{n,k}^{\pm}EF_I  T_{n-1}^{-1})) \\
   &=& \vartheta_{n-1}(\vartheta_n(\wmenos\mathbb{T}_{n,k}^{\pm}T_{n-1}^{-1}EF_{s_{n-1}(I)}))  \\
   &=&  \vartheta_{n-1}(\vartheta_n(\wmenos T_{n-2}^{-1}\mathbb{T}_{n,k}^{\pm}EF_{s_{n-1}(I)}))  \\
   &=&  \wmenos\vartheta_{n-1}( T_{n-2}^{-1}\vartheta_n(\mathbb{T}_{n,k}^{\pm}EF_{s_{n-1}(I)})) \\
 &=&\z \wmenos\vartheta_{n-1}( T_{n-2}^{-1}\mathbb{T}_{n-1,k}^{\pm}EF_{\tau_{n,k}(s_{n-1}(I))})\\
&=& \z\x \wmenos \mathbb{T}_{n-2,k}^{\pm}EF_{\tau_{n,k}(s_{n-1}(I))\backslash n-1}
\end{eqnarray*}

and it is easy verify that $\tau_{n,k}(s_{n-1}(I))\backslash n-1=\tau_{n-1,k}(I\backslash n)$.\smallbreak

\noindent $\bullet$\textsc{Subcase:} $\m_{n-1}=B_{n-1}$. First suppose $k=n-1$ for the negative case, that is $\m_n=T_{n-1}B_{n-1}$, we have
\begin{eqnarray*}
   \vartheta_{n-1}(\vartheta_n(T_{n-1}^{-1}X)) &=& \vartheta_{n-1}(\vartheta_n(T_{n-1}^{-1}\wmenos B_{n-1}T_{n-1}B_{n-1}EF_I))  \\
   &=& \vartheta_{n-1}(\vartheta_n(\wmenos \cancel{T_{n-1}^{-1} T_{n-1}}B_{n-1}B_n EF_I)) \quad\text{(by (i) Lemma~\ref{restantes} )}  \\
   &=& \wmenos\vartheta_{n-1}(\vartheta_n( B_{n-1}B_n EF_I))
\end{eqnarray*}
On the other hand
\begin{eqnarray*}
\vartheta_{n-1}(\vartheta_n(X T_{n-1}^{-1}))   &=& \vartheta_{n-1}(\vartheta_n(\wmenos B_{n-1}T_{n-1}B_{n-1}EF_I T_{n-1}^{-1})) \\
   &=& \wmenos\vartheta_{n-1}(\vartheta_n( B_{n-1}T_{n-1}B_{n-1} T_{n-1}^{-1}EF_{s_{n-1}(I)})) \\
   &=& \wmenos\vartheta_{n-1}(\vartheta_n( B_{n-1}B_n EF_{s_{n-1}(I)}))
\end{eqnarray*}
If we fix $I_1=I$ and $I_2=s_{n-1}(I)$,
\begin{eqnarray*}
 \vartheta_{n-1}(\vartheta_n( B_{n-1}B_n EF_{I_i}))  &=&  \W \vartheta_{n-1}( B_{n-1}EF_{I_i\backslash n}))  \\
  &=& \W \vartheta_{n-1}( B_{n-1}EF_{(I_i\backslash n)\backslash n-1}) \\
&=&\W^2 EF_{(I_i\backslash n)\backslash n-1}
\end{eqnarray*}
the result follows easily by comparing the partitions for $i=1,2$. In the present case, we are supposing that $n-1\in {\rm Supp}(I)$, for the opposite case we can proceed analogously, and we will obtain the same partitions, but this time, it will appear the parameter $\W\x$ for $i=1,2$ in the final result. We omit the proof for $\m_n=T_{n-1}$ (positive case) since can be verified analogously.\\

For $k<n-1$ we have
\begin{eqnarray*}
  \vartheta_{n-1}(\vartheta_n(T_{n-1}^{-1}X)) &=& \vartheta_{n-1}(\vartheta_n(T_{n-1}^{-1}\wmenos B_{n-1}\mathbb{T}_{n,k}^{\pm}EF_I)) \\
   &=& \wmenos \vartheta_{n-1}(\vartheta_n(T_{n-1}^{-1}B_{n-1}T_{n-1}T_{n-2}\mathbb{T}_{n-2,k}^{\pm}EF_I))\\
   &=&\wmenos \underbrace{\vartheta_{n-1}(\vartheta_n(T_{n-1}^{-1}B_{n-1}T_{n-1}T_{n-2}EF_{\sigma(I)}))}_{\mathtt A}\mathbb{T}_{n-2,k}^{\pm} \\
\end{eqnarray*}
where $\sigma=\sigma_{n-2,k}$. Now, let's compute ${\mathtt A}$
\begin{eqnarray*}
  {\mathtt A} &=& \vartheta_{n-1}(\vartheta_n(T_{n-1}^{-1}B_{n-1}T_{n-1}T_{n-2}EF_{\sigma(I)})) \\
   &=& \vartheta_{n-1}(\vartheta_n(T_{n-1}B_{n-1}T_{n-1}T_{n-2}EF_{\sigma(I)}))-(\U-\U^{-1})\vartheta_{n-1}(\vartheta_n(E_{n-1}B_{n-1}T_{n-1}T_{n-2}EF_{\sigma(I)})) \\
   &=& \vartheta_{n-1}(\vartheta_n(T_{n-1}B_{n-1}T_{n-1}^{-1}T_{n-2}EF_{\sigma(I)}))+(\U-\U^{-1})\vartheta_{n-1}(\vartheta_n(T_{n-1}B_{n-1}E_{n-1}T_{n-2}EF_{\sigma(I)}))\\
&&-(\U-\U^{-1})\vartheta_{n-1}(\vartheta_n(E_{n-1}B_{n-1}T_{n-1}T_{n-2}EF_{\sigma(I)})) \\
   &=& \vartheta_{n-1}(\vartheta_n(T_{n-2}B_{n}EF_{\sigma(I)}))+(\U-\U^{-1})\vartheta_{n-1}(\vartheta_n(T_{n-1}B_{n-1}T_{n-2}EF_{\sigma(I)*\{n,n-2\}}))\\
&&-(\U-\U^{-1})\vartheta_{n-1}(\vartheta_n(B_{n-1}T_{n-1}T_{n-2}EF_{\sigma(I)*\{n,n-2\}})) \\
&=&\W \vartheta_{n-1}(T_{n-2}EF_{\sigma(I)\backslash n})+(\U-\U^{-1})\vartheta_{n-1}(\vartheta_n(T_{n-1}T_{n-2}B_{n-2}EF_{\sigma(I)*\{n,n-2\}}))\\
&&-\z(\U-\U^{-1})\vartheta_{n-1}(B_{n-1}T_{n-2}EF_{\tau_{n,n-2}(\sigma(I)*\{n,n-2\})}) \\
&=&\W \z EF_{\tau_{n-1,n-2}(\sigma(I)\backslash n)}+\z(\U-\U^{-1})\vartheta_{n-1}(T_{n-2}B_{n-2}EF_{\tau_{n,n-2}(\sigma(I)*\{n,n-2\})})\\
&&-\z(\U-\U^{-1})\vartheta_{n-1}(T_{n-2}B_{n-2}EF_{\tau_{n,n-2}(\sigma(I)*\{n,n-2\})}) \\
&=&\W \z EF_{\tau_{n-1,n-2}(\sigma(I)\backslash n)}
\end{eqnarray*}

On the other hand

\begin{eqnarray*}
   \vartheta_{n-1}(\vartheta_n(X T_{n-1}^{-1}))&=& \vartheta_{n-1}(\vartheta_n(\wmenos B_{n-1}\mathbb{T}_{n,k}^{\pm}EF_I T_{n-1}^{-1})) \\
   &=&  \wmenos\vartheta_{n-1}(\vartheta_n(B_{n-1}T_{n-2}^{-1}\mathbb{T}_{n,k}^{\pm}EF_J))\\
   &=& \wmenos \underbrace{\vartheta_{n-1}(\vartheta_n(B_{n-1}T_{n-2}^{-1}T_{n-1}T_{n-2}EF_{\sigma(J)}))}_{\mathtt D}\mathbb{T}_{n-2,k}^{\pm}
 \end{eqnarray*}

where $J=s_{n-1}(I)$ and $\sigma=\sigma_{n-2,k}$. Now, we compute ${\mathtt D}$
\begin{eqnarray*}
  {\mathtt D} &=& \vartheta_{n-1}(\vartheta_n(B_{n-1}T_{n-2}^{-1}T_{n-1}T_{n-2}EF_{\sigma(J)})) \\
   &=&  \vartheta_{n-1}(\vartheta_n(B_{n-1}T_{n-2}T_{n-1}T_{n-2}EF_{\sigma(J)}))-(\U-\U^{-1})\vartheta_{n-1}(\vartheta_n(B_{n-1}E_{n-2}T_{n-1}T_{n-2}EF_{\sigma(J)}))\\
   &=& \vartheta_{n-1}(\vartheta_n(T_{n-2}B_{n-2}T_{n-1}T_{n-2}EF_{\sigma(J)}))-(\U-\U^{-1})\vartheta_{n-1}(\vartheta_n(B_{n-1}T_{n-1}T_{n-2}EF_{\sigma(J)*\{n,n-1\}})) \\
   &=& \z\vartheta_{n-1}(T_{n-2}B_{n-2}T_{n-2}EF_{\tau_{n,n-2}(\sigma(J))})-\z(\U-\U^{-1})\vartheta_{n-1}(B_{n-1}T_{n-2}EF_{\tau_{n,n-2}(\sigma(J)*\{n,n-1\})}) \\
   &=&   \z\vartheta_{n-1}(B_{n-1}EF_{\tau_{n,n-2}(\sigma(J))})+ \z(\U-\U^{-1})\vartheta_{n-1}(T_{n-2}B_{n-2}EF_{(\tau_{n,n-2}(\sigma(J)))*\{n-1,n-2\}})\\
   &&-\z(\U-\U^{-1})\vartheta_{n-1}(T_{n-2}B_{n-2}EF_{\tau_{n,n-2}(\sigma(J)*\{n,n-1\})}) \\
   &=&    \z\W EF_{\tau_{n,n-2}(\sigma(J))\backslash n-1}
\end{eqnarray*}
and it is not difficult verify that the partitions involved coincide.\smallbreak

\noindent $\bullet$\textsc{Subcase:} $\m_{n-1}=\mathbb{T}_{n-1,j}^{\pm}$. We will distinguish 2 subcases\\

First suppose that $k<n-1$
\begin{eqnarray*}
  \vartheta_{n-1}(\vartheta_n(T_{n-1}^{-1}X)) &=& \vartheta_{n-1}(\vartheta_n(T_{n-1}^{-1}\wmenos\mathbb{T}_{n-1,j}^{\pm}\mathbb{T}_{n,k}^{\pm}EF_I))\\
&=& \wmenos \vartheta_{n-1}(\vartheta_n(T_{n-1}^{-1}T_{n-2}\mathbb{T}_{n-2,j}^{\pm}T_{n-1}\mathbb{T}_{n-1,k}^{\pm}EF_I))\\
&=&\wmenos\vartheta_{n-1}(\vartheta_n(T_{n-1}T_{n-2}T_{n-2}^{-1}\mathbb{T}_{n-2,j}^{\pm}\mathbb{T}_{n-1,k}^{\pm}EF_I))  \\
&=&\wmenos \vartheta_{n-1}(\vartheta_n(T_{n-2}T_{n-1}T_{n-2}^{-1}\mathbb{T}_{n-2,j}^{\pm}\mathbb{T}_{n-1,k}^{\pm}EF_I))  \\
&=& \wmenos\vartheta_{n-1}(T_{n-2}\vartheta_n(T_{n-1}T_{n-2}^{-1}EF_{\varphi(I)})\mathbb{T}_{n-2,j}^{\pm}\mathbb{T}_{n-1,k}^{\pm}) \\
\end{eqnarray*}
where $\varphi=\sigma_{n-2,j}\sigma_{n-1,k}$. Further it is easy to prove that $$\vartheta_n(T_{n-1}T_{n-2}^{-1}EF_{\varphi(I}))=\z\ T_{n-2}^{-1}EF_{\tau_{n,n-2}(\varphi(I))}$$
then we obtain the following. First we consider $j<k$.
\begin{eqnarray*}
   \vartheta_{n-1}(\vartheta_n(T_{n-1}^{-1}X))  &=& \z\wmenos\vartheta_{n-1}(\cancel{T_{n-2}T_{n-2}^{-1}}EF_{\tau_{n,n-2}(\varphi(I))}\mathbb{T}_{n-2,j}^{\pm}\mathbb{T}_{n-1,k}^{\pm}) \\
   &=&\z\wmenos\vartheta_{n-1}(\mathbb{T}_{n-2,j}^{\pm}\mathbb{T}_{n-1,k}^{\pm}EF_{\tau_{n,j}(I)})  \\
   &=& \z^2\wmenos\mathbb{T}_{n-2,j}^{\pm}\mathbb{T}_{n-2,k}^{\pm}EF_{\tau_{n-1,k}(\tau_{n,j}(I))}
\end{eqnarray*}

On the other hand
\begin{eqnarray*}
 \vartheta_{n-1}(\vartheta_n(X T_{n-1}^{-1}))  &=&  \vartheta_{n-1}(\vartheta_n(\wmenos\mathbb{T}_{n-1,j}^{\pm}\mathbb{T}_{n,k}^{\pm}EF_I T_{n-1}^{-1})) \\
   &=& \wmenos \vartheta_{n-1}(\vartheta_n(\mathbb{T}_{n-1,j}^{\pm}\mathbb{T}_{n,k}^{\pm}T_{n-1}^{-1}EF_J)),\quad \text{where $J=s_{n-1}(I)$}\\
   &=&\wmenos  \vartheta_{n-1}(\mathbb{T}_{n-1,j}^{\pm}T_{n-2}^{-1}\vartheta_n(\mathbb{T}_{n,k}^{\pm}EF_J))\\
   &=& \z \wmenos\vartheta_{n-1}(\mathbb{T}_{n-1,j}^{\pm}T_{n-2}^{-1}\mathbb{T}_{n-1,k}^{\pm}EF_{\tau_{n,k}(J)})\\
&=& \z\wmenos \vartheta_{n-1}(\mathbb{T}_{n-1,j}^{\pm}\mathbb{T}_{n-2,k}^{\pm}EF_{\tau_{n,k}(J)} \\
   &=& \z \wmenos\vartheta_{n-1}(\mathbb{T}_{n-1,j}^{\pm}EF_{\sigma_{n-2,k}(\tau_{n,k}(J))})\mathbb{T}_{n-2,k}^{\pm} \\
   &=& \z^2\wmenos \mathbb{T}_{n-2,j}^{\pm}EF_{\tau_{n-1,j}(\sigma_{n-2,k}(\tau_{n,k}(J)))}\mathbb{T}_{n-2,k}^{\pm} \\
   &=&\z^2 \wmenos\mathbb{T}_{n-2,j}^{\pm}T_{n-2}^{-1}\mathbb{T}_{n-1,k}^{\pm}EF_{\tau_{n-1,j}(\tau_{n,k}(J))}  \\
 \end{eqnarray*}
and the result follows by comparing the partitions. Note that for $j\geq k$ the proof is the same but will appear the term $j+1$ instead of $j$, in both partitions. Finally suppose $k=n-1$, we only prove the negative case, that is $\m_n=T_{n-1}B_{n-1}$, since for the positive case we can proceed analogously, and it is easier
\begin{eqnarray*}
  \vartheta_{n-1}(\vartheta_n(T_{n-1}^{-1}X)) &=& \vartheta_{n-1}(\vartheta_n(T_{n-1}^{-1}\wmenos\mathbb{T}_{n-1,j}^{\pm}T_{n-1}B_{n-1}EF_I)) \\
   &=&\wmenos \vartheta_{n-1}(\vartheta_n(T_{n-1}^{-1}T_{n-2}\mathbb{T}_{n-2,j}^{\pm}T_{n-1}B_{n-1}EF_I)) \\
   &=& \wmenos \vartheta_{n-1}(\vartheta_n(T_{n-1}^{-1}T_{n-2}T_{n-1}\mathbb{T}_{n-2,j}^{\pm}B_{n-1}EF_I))\\
   &=&\wmenos  \vartheta_{n-1}(\vartheta_n(T_{n-2}T_{n-1}T_{n-2}^{-1}\mathbb{T}_{n-2,j}^{\pm}B_{n-1}EF_I))\\
   &=& \wmenos\vartheta_{n-1}(T_{n-2}\vartheta_n(T_{n-1}T_{n-2}^{-1}EF_{\sigma_{n-2,j}(I)})\mathbb{T}_{n-2,j}^{\pm}B_{n-1} )\\
   &=&  \z\wmenos\vartheta_{n-1}(\cancel{T_{n-2}T_{n-2}^{-1}}EF_{\tau_{n,n-2}(\sigma_{n-2,j}(I))}\mathbb{T}_{n-2,j}^{\pm}B_{n-1} )\\
   &=&   \z\wmenos\vartheta_{n-1}(\mathbb{T}_{n-2,j}^{\pm}B_{n-1}EF_{\tau_{n,j}(I)} )
\end{eqnarray*}
now, depending if $n-1\in {\rm Supp}(I)$ or not, we can obtain
$$\z\W \wmenos\mathbb{T}_{n-2,j}^{\pm}EF_{\tau_{n,j}(I)\backslash n-1}\quad \text{or} \quad \z\y \wmenos\mathbb{T}_{n-2,j}^{\pm}EF_{\tau_{n,j}(I)}$$
respectively.\\

On the other hand
\begin{eqnarray*}
   \vartheta_{n-1}(\vartheta_n(X T_{n-1}^{-1})) &=&  \vartheta_{n-1}(\vartheta_n(\wmenos\mathbb{T}_{n-1,j}^{\pm}T_{n-1}B_{n-1}EF_I T_{n-1}^{-1})) \\
   &=& \wmenos \vartheta_{n-1}(\vartheta_n(\mathbb{T}_{n-1,j}^{\pm}T_{n-1}B_{n-1}T_{n-1}^{-1}EF_J))\\
   &=&\wmenos \vartheta_{n-1}(\vartheta_n(\mathbb{T}_{n-1,j}^{\pm}B_nEF_J))=C
\end{eqnarray*}
Now depending of $n-1\in {\rm Supp}(I)$ or not, we obtain
\begin{equation*}
  \begin{array}{rcl|rcl}
 C  &=&\W \wmenos\vartheta_{n-1}(\mathbb{T}_{n-1,j}^{\pm}EF_{J\backslash n})  &   C& = & \y \wmenos\vartheta_{n-1}(\mathbb{T}_{n-1,j}^{\pm}EF_{J}) \\
   & = & \z\W\wmenos\mathbb{T}_{n-2,j}^{\pm}EF_{\tau_{n-1,j}(J\backslash n)} &  & = &  \z\y\wmenos\mathbb{T}_{n-2,j}^{\pm}EF_{\tau_{n-1,j}(J\backslash n)}
\end{array}
\end{equation*}
respectively. And, it is easy verify that the partitions are the same, therefore this case follows.\smallbreak

\noindent\textsc{Case:} $\m_{n}=B_{n}$, $\m_{n-1}=1$
\begin{eqnarray*}
  \vartheta_{n-1}(\vartheta_n(X T_{n-1})) &=&\vartheta_{n-1}(\vartheta_n(\wmenos B_nEF_I T_{n-1}))  \\
   &=& \vartheta_{n-1}(\vartheta_n(\wmenos B_n T_{n-1}EF_{s_{n-1}(I)}))  \\
   &=& \vartheta_{n-1}(\vartheta_n(\wmenos T_{n-1}B_{n-1}EF_{s_{n-1}(I)})) \\
   &=&\z\wmenos\vartheta_{n-1}(B_{n-1}EF_{\tau_{n,n-1}(s_{n-1}(I))})  \\
   &=& \z\W\wmenos EF_{\tau_{n,n-1}(s_{n-1}(I))\backslash n-1}
\end{eqnarray*}
On the other hand
\begin{eqnarray*}
   \vartheta_{n-1}(\vartheta_n(T_{n-1}X)) &=&\vartheta_{n-1}(\vartheta_n(T_{n-1}\wmenos B_nEF_I))  \\
   &=&\wmenos\vartheta_{n-1}(\vartheta_n(T_{n-1} B_nEF_I)) \\
   &=& \wmenos \vartheta_{n-1}(\vartheta_n(T_{n-1}^2B_{n-1}T_{n-1}^{-1}EF_I)) \\
\end{eqnarray*}
expanding the square and the inverse we have that

$$\vartheta_{n-1}(\vartheta_n(T_{n-1}^2B_{n-1}T_{n-1}^{-1}EF_I))=A-(\U-\U^{1})B+(\U-\U^{1})C$$
where
\begin{eqnarray*}
  A &:=& \vartheta_{n-1}(\vartheta_n(B_{n-1}T_{n-1}EF_I)) \\
  B &:=&  \vartheta_{n-1}(\vartheta_n(B_{n-1}E_{n-1}EF_I))\\
  C &:=&  \vartheta_{n-1}(\vartheta_n(E_{n-1}B_{n}EF_I))
\end{eqnarray*}
Now, by direct computations we have that
\begin{eqnarray*}
  A &=& \z \vartheta_{n-1}(B_{n-1}EF_{\tau_{n,n-1}(I)}) \\
   &=& \z\w EF_{\tau_{n,n-1}(I)\backslash n-1} \\
\end{eqnarray*}
\begin{eqnarray*}
  B &=& \vartheta_{n-1}(\vartheta_n(B_{n-1}EF_{I*\{n-1,n\}})) \\
   &=& \x\vartheta_{n-1}(B_{n-1}EF_{(I*\{n-1,n\})\backslash n}) \\
   &=& \x\W B_{n-1}EF_{((I*\{n-1,n\})\backslash n)\backslash n-1}
\end{eqnarray*}
and
\begin{eqnarray*}
  C &=& \vartheta_{n-1}(\vartheta_n(B_{n}EF_{I*\{n-1,n\}})) \\
   &=& \W\vartheta_{n-1}(EF_{I*\{n-1,n\}\backslash n}) \\
   &=& \x \W\vartheta_{n-1}(EF_{(I*\{n-1,n\}\backslash n)\backslash n-1}
\end{eqnarray*}
clearly we have that $B=C$ and also that $\tau_{n,n-1}(I)\backslash n-1=\tau_{n,n-1}(s_{n-1}(I))\backslash n-1$, thus the results follows.\smallbreak
\noindent\textsc{Case:} $\m_n=B_n$, $\m_{n-1}=B_{n-1}$
\begin{eqnarray*}
 \vartheta_{n-1}(\vartheta_n(X T_{n-1}))  &=& \vartheta_{n-1}(\vartheta_n(\wmenos B_{n-1}B_n EF_I T_{n-1}))  \\
   &=&  \vartheta_{n-1}(\vartheta_n(\wmenos B_{n-1}B_n T_{n-1}EF_{s_{n-1}(I)}))  \\
   &=&\wmenos \vartheta_{n-1}(\vartheta_n(B_{n-1}T_{n-1}B_{n-1}EF_{s_{n-1}(I)})) \\
\end{eqnarray*}
On the other hand
\begin{eqnarray*}
 \vartheta_{n-1}(\vartheta_n(T_{n-1}X))  &=& \vartheta_{n-1}(\vartheta_n(T_{n-1}\wmenos B_{n-1}B_{n}EF_I))\\
   &=& \wmenos\vartheta_{n-1}(\vartheta_n(T_{n-1} B_{n-1}B_{n}EF_I)) \\
   &=& \wmenos\vartheta_{n-1}(\vartheta_n(B_{n-1}T_{n-1}B_{n-1}EF_I))
\end{eqnarray*}
by using (i) Lemma~\ref{restantes}. Now denote $I_1=I$ and $I_2=s_{n-1}(I)$. Then we have
\begin{eqnarray*}
  \vartheta_{n-1}(\vartheta_n(B_{n-1}T_{n-1}B_{n-1}EF_{I_i})) &=&  \vartheta_{n-1}(B_{n-1}\vartheta_n(T_{n-1}EF_{I_i})B_{n-1}) \\
   &=& \z\vartheta_{n-1}(B_{n-1}EF_{\tau_{n,n-1}(I_i)}B_{n-1})  \\
   &=&  \z\vartheta_{n-1}(B_{n-1}^2EF_{\tau_{n,n-1}(I_i)})\\
   &=&  \z[\vartheta_{n-1}(EF_{\tau_{n,n-1}(I_i)})+ (\V-\V^{-1}) \vartheta_{n-1}(B_{n-1}EF_{\tau_{n,n-1}(I_i)*\{0,n-1\}}) ]\\
&=& \z[\x EF_{\tau_{n,n-1}(I_i)\backslash n-1}+ \W (\V-\V^{-1})\vartheta_{n-1}(B_{n-1}EF_{(\tau_{n,n-1}(I_i)*\{0,n-1\})\backslash n-1}) ]
\end{eqnarray*}
finally it is not difficult verify that are the same for $i=1,2$.\\

\noindent\textsc{Case:} $\m_n=B_n$, $\m_{n-1}=\mathbb{T}_{n-1,k}^{\pm}$ with $k<n-1$. We proceed first with the positive case
\begin{eqnarray*}
\vartheta_{n-1}(\vartheta_n(X T_{n-1}))   &=& \vartheta_{n-1}(\vartheta_n(\wmenos\mathbb{T}_{n-1,k}^{+} B_n EF_I T_{n-1})) \\
   &=&\wmenos\vartheta_{n-1}(\mathbb{T}_{n-1,k}^{+}\vartheta_n( B_n T_{n-1}EF_{s_{n-1}(I)})  \\
   &=&\wmenos\vartheta_{n-1}(\mathbb{T}_{n-1,k}^{+}\vartheta_n( T_{n-1}B_{n-1}EF_{s_{n-1}(I)})  \\
   &=&\z\ \wmenos \vartheta_{n-1}(\mathbb{T}_{n-1,k}^{+} B_{n-1}EF_{\tau_{n,n-1}(s_{n-1}(I))}) \\
   &=&\z\ \wmenos\vartheta_{n-1}( T_{n-2}B_{n-1}EF_{\sigma(I_1)})\mathbb{T}_{n-2,k}^{+}
\end{eqnarray*}
where $I_1=\tau_{n,n-1}(s_{n-1}(I))$ and $\sigma=\sigma_{n-2,k}$. Now expanding the square and the inverse we obtain that
\begin{eqnarray*}
 \vartheta_{n-1}( T_{n-2}B_{n-1}EF_{\sigma(I_1)})  &=& \vartheta_{n-1}( B_{n-2}T_{n-2}EF_{\sigma(I_1)})-(\U-\U^{-1})\vartheta_{n-1}( B_{n-2}E_{n-2}EF_{\sigma(I_1)})+\\
   &&(\U-\U^{-1})\vartheta_{n-1}( B_{n-1}E_{n-2}EF_{\sigma(I_1)})  \\
   &=& B_{n-2}\vartheta_{n-1}(T_{n-2}EF_{\sigma(I_1)})-(\U-\U^{-1})B_{n-2}\vartheta_{n-1}(EF_{\sigma(I_1)*\{n-1,n-2\}})+\\
   &&(\U-\U^{-1})\vartheta_{n-1}( B_{n-1}EF_{\sigma(I_1)*\{n-1,n-2\}})  \\
   &=& \z B_{n-2}EF_{\tau_{n-1,n-2}(\sigma(I_1))}-\x(\U-\U^{-1})B_{n-2}EF_{(\sigma(I_1)*\{n-1,n-2\})\backslash n-1}+\\
   &&\W(\U-\U^{-1}) EF_{(\sigma(I_1)*\{n-1,n-2\})\backslash n-1}  \\
\end{eqnarray*}
On the other hand
\begin{eqnarray*}
 \vartheta_{n-1}(\vartheta_n(T_{n-1}X))  &=& \vartheta_{n-1}(\vartheta_n(T_{n-1}\wmenos\mathbb{T}_{n-1,k}^{+}B_n EF_I)) \\
   &=&\wmenos\vartheta_{n-1}(\vartheta_n(\mathbb{T}_{n,k}^{+}B_n EF_I)) \\
   &=& \wmenos[\vartheta_{n-1}(\vartheta_n(B_{n-1}T_{n-1}\mathbb{T}_{n-1,k}^{+} EF_I))-\\
&&(\U-\U^{-1})\vartheta_{n-1}(\vartheta_n(B_{n-1}E_{n-1}\mathbb{T}_{n-1,k}^{+} EF_I)) +
    (\U-\U^{-1})\vartheta_{n-1}(\vartheta_n(B_{n}E_{n-1}\mathbb{T}_{n-1,k}^{+} EF_I))]
  \end{eqnarray*}
Let's compute each term separately
 \begin{eqnarray*}
 \bullet  \vartheta_{n-1}(\vartheta_n(B_{n-1}T_{n-1}\mathbb{T}_{n-1,k}^{+} EF_I)) &=& \vartheta_{n-1}(B_{n-1}\vartheta_n(T_{n-1} EF_{\varphi( I)})\mathbb{T}_{n-1,k}^{+})\quad \text{with $\varphi=\sigma_{n-1,k}$} \\
    &=&\z\vartheta_{n-1}(B_{n-1}EF_{\tau_{n,n-1}\varphi(I)}\mathbb{T}_{n-1,k}^{+}) \\
    &=&\z \vartheta_{n-1}(B_{n-1}\mathbb{T}_{n-1,k}^{+}EF_{\tau_{n,k}(I)} \quad \text{(by (iii) Lemma~\ref{partitionsprop})} \\
    &=&\z \vartheta_{n-1}(T_{n-2}B_{n-2}\mathbb{T}_{n-2,k}^{+}EF_{\tau_{n,k}(I)}) \\
&=& \z \vartheta_{n-1}(T_{n-2}B_{n-2}EF_{\sigma(\tau_{n,k}(I))}) \mathbb{T}_{n-2,k}^{+}\\
&=& \z^2 B_{n-2}EF_{\tau_{n-1,n-2}(\sigma(\tau_{n,k}(I)))} \mathbb{T}_{n-2,k}^{+}
 \end{eqnarray*}
\\
\begin{eqnarray*}
 \bullet \vartheta_{n-1}(\vartheta_n(B_{n-1}E_{n-1}\mathbb{T}_{n-1,k}^{+} EF_I)) &=& \vartheta_{n-1}(\vartheta_n(B_{n-1}\mathbb{T}_{n-1,k}^{+}E_{n,k} EF_I)) \\
   &=& \vartheta_{n-1}(\vartheta_n(B_{n-1}\mathbb{T}_{n-1,k}^{+}EF_{I*\{n,k\}})) \\
   &=& \x\vartheta_{n-1}(B_{n-1}\mathbb{T}_{n-1,k}^{+}EF_{(I*\{n,k\})\backslash n}) \\
   &=& \x\vartheta_{n-1}(T_{n-2}B_{n-2}EF_{\sigma(\tau_{n,k}(I))})\mathbb{T}_{n-2,k}^{+} \\
   &=&\z\x B_{n-2}EF_{\tau_{n-1,n-2}(\sigma(\tau_{n,k}(I)))}\mathbb{T}_{n-2,k}^{+}
\end{eqnarray*}

\begin{eqnarray*}
\bullet  \vartheta_{n-1}(\vartheta_n(B_{n}E_{n-1}\mathbb{T}_{n-1,k}^{+} EF_I))  &=&  \vartheta_{n-1}(\vartheta_n(B_{n}\mathbb{T}_{n-1,k}^{+} E_{n,k}EF_I))\\
   &=&  \vartheta_{n-1}(\vartheta_n(\mathbb{T}_{n-1,k}^{+}B_{n}EF_{I*\{n,k\}})) \\
   &=& \W\vartheta_{n-1}(\mathbb{T}_{n-1,k}^{+}EF_{(I*\{n,k\})\backslash n})) \\
   &=& \z\W \mathbb{T}_{n-2,k}^{+}EF_{\tau_{n-1,k}(\tau_{n,k}(I))}\\
&=&\z\W EF_{\tau_{n-1,n-2}(\sigma(\tau_{n,k}(I)))}\mathbb{T}_{n-2,k}^{+}
\end{eqnarray*}
Finally, by using (iii) Lemma~\ref{partitionsprop} we have
\begin{eqnarray*}
  \tau_{n-1,n-2}(\sigma(\tau_{n,k}(I))) &=& \tau_{n-1,n-2}((\tau_{n,n-2}(\sigma(I)))) \\
   &=& (\sigma(I)*\{n,n-1,n-2\})\backslash \{n,n-1\}
\end{eqnarray*}
and, on the other hand
\begin{eqnarray*}
  \tau_{n-1,n-2}(\sigma(I_1)) &=& \tau_{n-1,n-2}(\tau_{n,n-1}(\sigma(s_{n-1}(I))))  \\
   &=& \tau_{n-1,n-2}(\tau_{n,n-1}(s_{n-1}(\sigma(I))))\\
&=& (s_{n-1}(\sigma(I))*\{n,n-1,n-2\})\backslash \{n,n-1\}
\end{eqnarray*}
since $s_{n-1}$ only moves the elements $n$ and $n-1$, and these are removed by the partition, the equality follows.\\
\smallbreak
For the negative case, we have
\begin{eqnarray*}
 \vartheta_{n-1}(\vartheta_n(X T_{n-1}))  &=& \vartheta_{n-1}(\vartheta_n(\wmenos\mathbb{T}_{n-1,k}^{-}B_n T_{n-1}EF_{s_{n-1}(I)})) \\
   &=& \wmenos \vartheta_{n-1}(\vartheta_n(\mathbb{T}_{n-1,k}^{-}T_{n-1}B_{n-1}EF_{s_{n-1}(I)}))\\
   &=&  \z\wmenos\vartheta_{n-1}(\mathbb{T}_{n-1,k}^{-}B_{n-1}EF_{\tau_{n,n-1}(s_{n-1}(I))})\\
   &=& \z\wmenos\vartheta_{n-1}(B_{n-2}\mathbb{T}_{n-1,k}^{-}EF_{\tau_{n,n-1}(s_{n-1}(I))})\quad\text{(by (ii) Lemma~\ref{restantes})} \\
  &=&\z \wmenos B_{n-2}\vartheta_{n-1}(\mathbb{T}_{n-1,k}^{-}EF_{\tau_{n,n-1}(s_{n-1}(I))}) \\
&=&\z^2 \wmenos B_{n-2}\mathbb{T}_{n-2,k}^{-}EF_{\tau_{n-1,k}(\tau_{n,n-1}(s_{n-1}(I)))} \\
&=& \z^2 \wmenos B_{n-2}EF_{\sigma(\tau_{n-1,k}(\tau_{n,n-1}(s_{n-1}(I))))}\mathbb{T}_{n-2,k}^{-}
\end{eqnarray*}
and for the other side
\begin{eqnarray*}
  \vartheta_{n-1}(\vartheta_n(T_{n-1}X)) &=& \vartheta_{n-1}(\vartheta_n(T_{n-1}\wmenos\mathbb{T}_{n-1,k}^{-}B_n EF_{I})) \\
   &=&  \wmenos\vartheta_{n-1}(\vartheta_n(\mathbb{T}_{n,k}^{-}B_n EF_{I}))\\
   &=&\wmenos\vartheta_{n-1}(\vartheta_n(B_{n-1}\mathbb{T}_{n,k}^{-} EF_{I}))  \\
   &=&\z\wmenos \vartheta_{n-1}(B_{n-1}\mathbb{T}_{n-1,k}^{-} EF_{\tau_{n,k}(I)}) \\
   &=& \z \wmenos\vartheta_{n-1}(T_{n-2}B_{n-2} EF_{\sigma(\tau_{n,k}(I))})\mathbb{T}_{n-2,k}^{-} \\
&=&\z^2 \wmenos B_{n-2} EF_{\tau_{n-1,n-2}(\sigma(\tau_{n,k}(I)))}\mathbb{T}_{n-2,k}^{-} \\
&=&\z^2 \wmenos B_{n-2} EF_{\tau_{n-1,n-2}(\sigma(\tau_{n,k}(I)))}\mathbb{T}_{n-2,k}^{-}
\end{eqnarray*}
Finally, using (iii) Lemma~\ref{partitionsprop} we have that
\begin{eqnarray*}
 \sigma(\tau_{n-1,k}(\tau_{n,n-1}(s_{n-1}(I))))  &=& \tau_{n-1,n-2}(\tau_{n,n-1}(\sigma (s_{n-1}(I)))) \\
   &=& [(\sigma (s_{n-1}(I))*\{n,n-1\})\backslash n]*\{n-1,n-2\}\backslash n-1\\
   &=& [(\sigma (s_{n-1}(I))*\{n,n-1,n-2\}]\backslash\{n,n-1\}\\
&=&[(\sigma (I)*\{n,n-1,n-2\}]\backslash\{n,n-1\}\\
&=&\tau_{n-1,n-2}(\tau_{n,n-2}(\sigma (I))=\tau_{n-1,n-2}(\sigma(\tau_{n,k}(I)))
\end{eqnarray*}

\end{proof}

Let be $\mathtt{tr}_n:\E\rightarrow \mathbb{L}$ the linear map defined inductively as follows by, $\mathtt{tr}_1=\vartheta_1$ and
\begin{equation}\label{tracefinal}
\mathtt{tr}_n:=\mathtt{tr}_{n-1}\circ\vartheta_n
\end{equation}
and, let us denote by $\mathtt{tr}$ the family $\{\mathtt{tr}_n\}_{n\geq 1}$. Then, we have the following result
\begin{theorem}\label{Markovtrace}
  $\mathtt{tr}$ is a Markov trace on $\{\E\}_{n\geq 1}$. That is, for every $n\geq 1$ the linear map $\mathtt{tr}_n:\E\rightarrow \mathbb{L}$ satisfies the following properties.

\begin{itemize}
  \item[i)] $\mathtt{tr}_{n}(1)  = 1$
  \item[ii)] $\mathtt{tr}_{n+1}(XT_{n}) = \mathtt{tr}_{n+1}(XE_{n}T_n)= \z \mathtt{tr}_{n}(X) $
  \item[iii)] $\mathtt{tr}_{n+1}(XE_{n}) =\x \mathtt{tr}_{n}(X)$
  \item[iv)] $\mathtt{tr}_{n+1}(XB_{n}) =\y \mathtt{tr}_{n}(X)$
  \item[v)] $\mathtt{tr}_{n+1}(XB_{n}E_n) =\mathtt{tr}_{n+1}(XB_{n}F_{n+1})=\W \mathtt{tr}_{n}(X)$
  \item[vi)] $\mathtt{tr}_n(XY)  =  \mathtt{tr_n}(YX) $
where $X,Y\in \E$
\end{itemize}
for all $n\geq 1$.
\end{theorem}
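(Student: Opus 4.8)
The plan is to treat the six assertions in two groups. Properties (i)--(v) are \emph{local}: each concerns a single top generator and will fall out of the bimodule property of the relative traces together with a one-line evaluation of $\vartheta_{n+1}$. The substantive statement is the trace property (vi), which I will prove by induction on $n$, feeding in the three commutation results established above.

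For (i) I would observe that $1=EF_{I_0}$ with $I_0$ the all-singleton partition of $\mathbf{n}_0$; since $n\notin{\rm Supp}(I_0)$ the first line of (\ref{trace}) gives $\vartheta_n(1)=1$, hence $\mathtt{tr}_n(1)=\mathtt{tr}_{n-1}(1)$, and (i) closes by induction from $\mathtt{tr}_1(1)=1$. For (ii)--(v) the element $X\in\E$ lies one level below $n+1$, so Lemma~\ref{lemmatraza1}(ii) applied at level $n+1$ lets me pull it out: $\vartheta_{n+1}(XG)=X\,\vartheta_{n+1}(G)$ for each top element $G$. It then remains only to read off the matching row of (\ref{trace}) to obtain $\vartheta_{n+1}(T_n)=\z$, $\vartheta_{n+1}(E_n)=\x$, $\vartheta_{n+1}(B_{n+1})=\y$ and $\vartheta_{n+1}(B_{n+1}E_n)=\vartheta_{n+1}(B_{n+1}F_{n+1})=\W$, where for the loop generator I use the top element $B_{n+1}$ of $\mathsf{M}_{n+1}$. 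The coincidences $\vartheta_{n+1}(E_nT_n)=\vartheta_{n+1}(T_n)$ and $\vartheta_{n+1}(B_{n+1}F_{n+1})=\vartheta_{n+1}(B_{n+1}E_n)$ come from the fact that the partitions attached to $E_n$ and to $F_{n+1}$ both place $n+1$ in a non-single block, so that $\tau_{n+1,n}$ collapses it and the surviving $EF$-factor is trivial. Applying $\mathtt{tr}_n$ to $\vartheta_{n+1}(XG)=(\text{scalar})\,X$ finishes these cases.

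The core is (vi), which I would prove by induction on $n$, the base $n=1$ being immediate since $\mathcal{E}_1^{\B}$ is commutative. First I would reduce, by bilinearity and a word-length induction, to checking $\mathtt{tr}_n(Xg)=\mathtt{tr}_n(gX)$ for all $X\in\E$ and a single generator $g$: writing a word as $Y=Y'g$, applying this identity with $XY'$ in place of $X$, and then invoking the inductive symmetry in the shorter word $Y'$ yields the general identity $\mathtt{tr}_n(XY)=\mathtt{tr}_n(YX)$. Crucially, by (\ref{fi}) the framings $F_2,\dots,F_n$ are redundant, so I may use the generating set $B_1,F_1,T_1,\dots,T_{n-1},E_1,\dots,E_{n-1}$, in which only $T_{n-1}$ and $E_{n-1}$ fall outside $\mathcal{E}_{n-1}^{\B}$. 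For $g\in\mathcal{E}_{n-1}^{\B}$, Lemma~\ref{lemmatraza1} moves $g$ across $\vartheta_n$ on both sides, and the inductive symmetry of $\mathtt{tr}_{n-1}$ applied to $\vartheta_n(X),g\in\mathcal{E}_{n-1}^{\B}$ closes the case. For $g=T_{n-1}$ I would write $\mathtt{tr}_n=\mathtt{tr}_{n-2}\circ\vartheta_{n-1}\circ\vartheta_n$ and quote Lemma~\ref{commT}, giving $\mathtt{tr}_n(XT_{n-1})=\mathtt{tr}_n(T_{n-1}X)$; for $g=E_{n-1}$ the identical argument with Lemma~\ref{commE}(ii) in place of Lemma~\ref{commT} applies.

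The main obstacle is already absorbed into the prior lemmas, where the delicate partition bookkeeping behind Lemmas~\ref{commT} and~\ref{commE} lives. Within the theorem itself the only genuinely careful points are organizing the reduction to single generators and, above all, choosing the economical generating set so that the framing generator $F_n$ never has to be handled directly: treating $F_n$ head-on would force one to track how it threads through an arbitrary $\mathcal{C}_n$-word, a complication that the redundancy relation (\ref{fi}) cleanly circumvents.
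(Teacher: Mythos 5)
Your proposal is correct and takes essentially the same route as the paper: properties (ii)--(v) are read off from Lemma~\ref{lemmatraza1}(ii) together with the defining formula (\ref{trace}), and (vi) is proved by induction on $n$, handling $Y\in\mathcal{E}_{n-1}^{\mathtt B}$ via Lemma~\ref{lemmatraza1} and the inductive hypothesis, and $Y\in\{T_{n-1},E_{n-1}\}$ via $\mathtt{tr}_n=\mathtt{tr}_{n-2}\circ\vartheta_{n-1}\circ\vartheta_n$ together with Lemmas~\ref{commT} and~\ref{commE}(ii). If anything you are more explicit than the paper, which compresses the reduction to single generators into the phrase ``having in mind the linearity'' and never addresses the generator $F_n$; your word-length reduction and the elimination of $F_2,\dots,F_n$ through (\ref{fi}) make that implicit step rigorous.
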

\begin{proof}
  Rules (ii)--(v) are direct  consequences of  Lemma \ref{lemmatraza1} (ii).  We will prove rule (vi) by induction on $n$. For $n=1$, the rule  holds since $\mathcal{E}_1^{\mathtt B}$ is commutative. Suppose now that (vi) is true for all $k$ less than $n$. For  $Y\in \mathcal{E}_{n-1}^{\mathtt{B}}$ and $X\in \E$ the result follows easily by Lemma~\ref{lemmatraza1} and induction hypothesis, cf. \cite[Theorem 3]{aijuMMJ}. Thus, $ \mathtt{tr}_n(XY) =  \mathtt{tr}_n(YX)$ for all $X\in \E$ and $Y\in \mathcal{E}_{n-1}^{\mathtt{B}}$.\smallbreak
 Further, for   $Y\in \{T_{n-1}, E_{n-1}\}$ we have
$$
\mathtt{tr}_n(XY) = \mathtt{tr}_{n-2}(\mathtt{tr}_{n-1}(\mathtt{tr}_n(XY)))
= \mathtt{tr}_{n-2}(\mathtt{tr}_{n-1}(\mathtt{tr}_n(YX)))
$$
by using Lemmas \ref{commT} and \ref{commE}.\smallbreak
Therefore, we have
$$
\mathtt{tr}_n(XY) = \mathtt{tr}_n(XY)
$$
for all $X\in \E$ and $Y\in \mathcal{E}_{n-1}^{\B} \cup \{T_{n-1}, E_{n-1}\}$, thus, having in mind the linearity of ${\mathtt tr}_n$, the result follows.
\end{proof}

\subsection{} \textbf{Knot invariants from $\E$.} In order to define a new invariant of classical links in the solid torus, we recall some necessary facts. The closure of a braid $\alpha$ in the group $\widetilde{W}_n$ (recall Section~\ref{typeB}), is defined by joining with simple (unknotted and unlinked) arcs its corresponding endpoints, and it is denoted by $\widehat{\alpha}$. The result of closure, $\widehat{\alpha}$, is a link in the solid torus, denoted $ST$. This can be regarded by viewing the closure of the fixed strand as the complementary solid torus. For an example of a link in the solid torus see Figure~\ref{flink}. By the analogue of the Markov theorem for $ST$ (cf. for example \cite{la1,la2}), isotopy classes of oriented links in $ST$ are in bijection with equivalence classes of $\bigcup_n \widetilde{W}_n$, the inductive limit of braid groups of type $\mathtt{B}$, respect to the equivalence relation $\sim_{\mathtt B}$:
\begin{itemize}
  \item[(i)] $\alpha\beta \sim_{\mathtt B} \beta\alpha $
  \item[(ii)] $\alpha\sim_{\mathtt B} \alpha\sigma_n$ and $\alpha\sim_{\mathtt B} \alpha\sigma_n^{-1}$
\end{itemize}
for all $\alpha, \beta \in W_n$.

\begin{figure}[h!]
\begin{center}
  \includegraphics{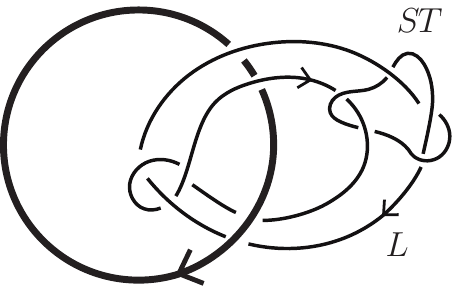}
	\caption{A link in the solid torus.}\label{flink}
	\end{center}
 \end{figure}

We set
\begin{equation}\label{CapitalLambda}
\mathsf{L} := \frac{\z - ({\U}- {\U}^{-1})\x}{\z} \quad \text{and} \quad D:=\frac{1}{z \sqrt{\mathsf{L}}}.
\end{equation}
And, let us denote $\pi_{\mathsf{L}}$ the representation of $\widetilde{W}_n$ in $\E$, given by $\sigma_i \mapsto \sqrt{\mathsf{L}}T_i$ and $\rho_1\mapsto B_1$. Then, for $\alpha\in \widetilde{W}_n $, we define
\begin{equation}\label{inv1}
\overline{\Delta}_{\mathtt{B}}(\alpha):=(D)^{n-1}(\mathtt{tr}_n\circ \pi_{\mathsf{L}})(\alpha)
\end{equation}
it is well know that the previous expression can be rewritten as follows
\begin{equation}\label{inv1}
\overline{\Delta}_{\mathtt{B}}(\alpha)=(D)^{n-1}(\sqrt{\mathsf{L}})^{e(\alpha)}(\mathtt{tr}_n\circ \pi)(\alpha)
\end{equation}
where $e(\alpha)$ is the exponent sum of the $\sigma_i$'s appearing in the braid $\alpha$, and $\pi$ is the natural representation of $\widetilde{W}_n$ in $\E$.

\begin{theorem}
  Let $L$ be a link in $ST$ obtained by closure a braid $\alpha\in \widetilde{W}_n$. Then the map $L\mapsto\overline{\Delta}_{\mathtt{B}}(\alpha) $ defines an isotopy invariant of links in $ST$.
\end{theorem}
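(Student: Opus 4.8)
The plan is to invoke the analogue of the Markov theorem for $ST$ recalled just above the statement, which identifies isotopy classes of oriented links in $ST$ with equivalence classes of $\bigcup_n \widetilde{W}_n$ under $\sim_{\mathtt{B}}$. Since $\sim_{\mathtt{B}}$ is generated by the conjugation move (i) and the positive/negative stabilization moves (ii), it suffices to check that $\overline{\Delta}_{\mathtt{B}}$ takes the same value on the two sides of each of these moves. I emphasize that the substantial work is already done: once $\mathtt{tr}$ is known to be a Markov trace (Theorem~\ref{Markovtrace}), what remains is the normalization bookkeeping of Jones's recipe, and the choices of $\mathsf{L}$ and $D$ in (\ref{CapitalLambda}) are engineered precisely to make it work.

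For move (i), take $\alpha,\beta\in\widetilde{W}_n$. Both words live in the same braid group, so the factor $D^{n-1}$ is unchanged; since $\pi_{\mathsf{L}}$ is an algebra homomorphism we have $\pi_{\mathsf{L}}(\alpha\beta)=\pi_{\mathsf{L}}(\alpha)\pi_{\mathsf{L}}(\beta)$, and property (vi) of Theorem~\ref{Markovtrace} gives $\mathtt{tr}_n(\pi_{\mathsf{L}}(\alpha)\pi_{\mathsf{L}}(\beta))=\mathtt{tr}_n(\pi_{\mathsf{L}}(\beta)\pi_{\mathsf{L}}(\alpha))$. Hence $\overline{\Delta}_{\mathtt{B}}(\alpha\beta)=\overline{\Delta}_{\mathtt{B}}(\beta\alpha)$, so conjugation invariance is immediate.

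For move (ii), write $X=\pi_{\mathsf{L}}(\alpha)\in\E$ for $\alpha\in\widetilde{W}_n$, viewed inside $\mathcal{E}_{n+1}^{\mathtt{B}}$ via the tower of Remark~\ref{tower}. For the positive stabilization, $\pi_{\mathsf{L}}(\alpha\sigma_n)=\sqrt{\mathsf{L}}\,XT_n$, so property (ii) of Theorem~\ref{Markovtrace} yields $\mathtt{tr}_{n+1}(\sqrt{\mathsf{L}}\,XT_n)=\sqrt{\mathsf{L}}\,\z\,\mathtt{tr}_n(X)$, whence $\overline{\Delta}_{\mathtt{B}}(\alpha\sigma_n)=D\sqrt{\mathsf{L}}\,\z\,\overline{\Delta}_{\mathtt{B}}(\alpha)$. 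For the negative stabilization I would expand $T_n^{-1}=T_n-(\U-\U^{-1})E_n$ using the quadratic relation and combine properties (ii) and (iii) to obtain $\mathtt{tr}_{n+1}(XT_n^{-1})=(\z-(\U-\U^{-1})\x)\,\mathtt{tr}_n(X)=\z\mathsf{L}\,\mathtt{tr}_n(X)$, giving $\overline{\Delta}_{\mathtt{B}}(\alpha\sigma_n^{-1})=D\,\z\sqrt{\mathsf{L}}\,\overline{\Delta}_{\mathtt{B}}(\alpha)$.

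Finally, both scalars equal $1$ by the choice of constants in (\ref{CapitalLambda}): since $D=1/(\z\sqrt{\mathsf{L}})$ we get $D\sqrt{\mathsf{L}}\,\z=1$ and $D\,\z\sqrt{\mathsf{L}}=1$, while the defining identity $\z\mathsf{L}=\z-(\U-\U^{-1})\x$ is exactly what makes the negative move close up. Thus $\overline{\Delta}_{\mathtt{B}}$ is invariant under conjugation and under both stabilizations, so it descends to an isotopy invariant of links in $ST$. The only step I expect to require genuine care — and the natural candidate for the main obstacle, were the Markov trace not already available — is the negative stabilization, where the quadratic relation together with rules (ii) and (iii) must be assembled so that the combination collapses to the single factor $\z\mathsf{L}$; given Theorem~\ref{Markovtrace}, however, this reduces to routine algebraic bookkeeping.
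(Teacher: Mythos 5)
Your proposal is correct and follows exactly the route the paper intends: its proof is the one-line remark that the result ``follows by using the Markov trace properties, and by the definition of the normalization element $\mathsf{L}$,'' and your argument simply fills in those details --- conjugation via property (vi), positive stabilization via property (ii), and negative stabilization via the expansion $T_n^{-1}=T_n-(\U-\U^{-1})E_n$ together with properties (ii) and (iii), with the constants $\mathsf{L}$ and $D$ cancelling as you computed. All of your scalar bookkeeping (including the $1/\sqrt{\mathsf{L}}$ from $\pi_{\mathsf{L}}(\sigma_n^{-1})$) checks out, so there is nothing to add.
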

\begin{proof}
  The proof follows by using the Markov trace properties, and by the definition of the normalization element $\mathsf{L}$.
\end{proof}

\begin{remark}\rm
  Note that the classical links can be regarded as a link in $ST$, in fact, a classical link can be obtained by closure a braid $\alpha\in \widetilde{W}_n$ whose doesn't contain $\rho_1$ in its expression. Thus, the invariant $\overline{\Delta}_{\mathtt{B}}$ restricted to classical links coincide with the invariant $\overline{\Delta}$ given in \cite[Section 6]{aijuMMJ}, and therefore it's more powerful than the Homflypt polynomial in that case.\\
\end{remark}
\begin{remark}\rm
The Markov trace $\mathtt{tr}$ from Theorem~\ref{Markovtrace} was constructed with the aim to define invariants for \lq\lq tied links in the solid torus\rq\rq, having as reference \cite{aiju2}. However, to do that, it is necessary to introduce these new objects from the beginning, which is a problem itself. Then, we will study this subject in a future work (in progress).
\end{remark}

\bibliography{BT_type_b}{}
\bibliographystyle{acm}

\end{document}